\newcommand{\Vect}{\mathsf{Vect}}
\DeclareMathOperator{\End}{End}
\DeclareMathOperator{\Ad}{Ad}
\DeclareMathOperator{\pr}{pr}
\DeclareMathOperator{\aff}{aff}
\newtheorem{Thm}{Theorem}[section]
\newtheorem{Pro}[Thm]{Proposition}
\newtheorem{Lem}[Thm]{Lemma}
\newtheorem{Cor}[Thm]{Corollary}
\newtheorem{Def-Pro}[Thm]{Definition-Proposition}
\newtheorem{Def}[Thm]{Definition}
\theoremstyle{definition}
\newtheorem{Ex}[Thm]{Example}
\newtheorem{Rm}[Thm]{Remark}
\begin{document}
\title{Affine structures on Lie groupoids}

\author{ \vspace{2mm}  Honglei Lang  \, Zhangju Liu  \, and \, Yunhe Sheng\\ \vspace{2mm}
\it{Department of Applied Mathematics, China Agricultural University, Beijing, 100873, China}\\ \vspace{2mm}
\it{Department of Mathematics, Peking University, Beijing, 100871, China}\\ \vspace{2mm}
\it{Department of Mathematics, Jilin University, Changchun, 130012, China}\\ \vspace{2mm}
hllang@cau.edu.cn,~~liuzj@pku.edu.cn,~~shengyh@jlu.edu.cn}

\footnotetext{{\it{Keywords}}:~affine structure, multiplicative structure, $2$-vector space, Lie 2-algebra, strict monoidal category.}
%\footnotetext{{\it{MSC}}:~}

\date{}
\maketitle

\makeatletter
\newif\if@borderstar
\def\bordermatrix{\@ifnextchar*{%
\@borderstartrue\@bordermatrix@i}{\@borderstarfalse\@bordermatrix@i*}%
}
\def\@bordermatrix@i*{\@ifnextchar[{\@bordermatrix@ii}{\@bordermatrix@ii[()]}}
\def\@bordermatrix@ii[#1]#2{%
\begingroup
\m@th\@tempdima8.75\p @\setbox\z@\vbox{%
\def\cr{\crcr\noalign{\kern 2\p@\global\let\cr\endline }}%
\ialign {$##$\hfil\kern 2\p@\kern\@tempdima & \thinspace %
\hfil $##$\hfil && \quad\hfil $##$\hfil\crcr\omit\strut %
\hfil\crcr\noalign{\kern -\baselineskip}#2\crcr\omit %
\strut\cr}}%
\setbox\tw@\vbox{\unvcopy\z@\global\setbox\@ne\lastbox}%
\setbox\tw@\hbox{\unhbox\@ne\unskip\global\setbox\@ne\lastbox}%
\setbox\tw@\hbox{%
$\kern\wd\@ne\kern -\@tempdima\left\@firstoftwo#1%
\if@borderstar\kern2pt\else\kern -\wd\@ne\fi%
\global\setbox\@ne\vbox{\box\@ne\if@borderstar\else\kern 2\p@\fi}%
\vcenter{\if@borderstar\else\kern -\ht\@ne\fi%
\unvbox\z@\kern-\if@borderstar2\fi\baselineskip}%
\if@borderstar\kern-2\@tempdima\kern2\p@\else\,\fi\right\@secondoftwo#1 $%
}\null \;\vbox{\kern\ht\@ne\box\tw@}%
\endgroup
}
\makeatother
\begin{abstract}
Affine structures on a Lie groupoid, including affine $k$-vector fields, $k$-forms and $(p,q)$-tensors are studied. We show that the space of affine structures is  a $2$-vector space over the space of multiplicative structures. Moreover, the space of affine multivector fields has a natural graded strict Lie $2$-algebra structure and affine $(1,1)$-tensors constitute a strict monoidal category. Such higher structures can be seen as the categorification of multiplicative structures on a Lie groupoid.
\end{abstract}

\tableofcontents

\section{Introduction}
Geometric structures on a Lie groupoid that are compatible with the groupoid multiplication are called multiplicative structures. They have been studied intensively these years and their infinitesimal correspondings are also developed. See  \cite{Xu, MX, Xu28} and \cite{BC2,BC,C} for multiplicative vector fields and multiplicative forms respectively and see \cite{BD} for the theory of multiplicative tensors. Beyond this, there are also multiplicative Dirac structures \cite{O}, multiplicative generalized complex structures \cite{JSX}, multiplicative contact and Jacobi structures \cite{C2,C3,IM}, multiplicative distributions \cite{JL} and multiplicative Manin pairs \cite{LB}, etc.
We refer to \cite{KS} for a survey on this subject.

 Our purpose is to study geometric structures that are compatible with the affinoid structure on a Lie groupoid.  It is motivated by Weinstein's work \cite{Weinstein}, where he studied Poisson manifolds carrying also affinoid structures. An affinoid structure on a space $X$ is a subset of $X^4$ whose elements are called parallelograms, with axioms modeled on the properties of the quaternary relation $\{(g,h,l,k);hg^{-1}=kl^{-1}\}$ on a group or a groupoid. Groupoids are affinoid spaces, but not every affinoid space arises in this way.  In \cite{M1,M2}, Mackenzie regarded affinoid structures as a type of double groupoids. He gave the equivalence  of affinoid structures, butterfly diagrams and generalized principal bundles and studied their infinitesimal invariants.

The multiplicativity condition for a $k$-vector field  (a $k$-form) on a Lie groupoid is known as the graph $\{(g,h,gh);s(g)=t(h)\}$ of the groupoid multiplication being a coisotropic (an isotropic) submanifold of $\mathcal{G}\times \mathcal{G}\times \mathcal{G}$.
 %with respect to $\Pi\oplus \Pi\oplus (-1)^{k+1} \Pi$
 %$\Pi\in \mathfrak{X}^k(\mathcal{G})$,
 %of $\mathcal{G}\times \mathcal{G}\times \mathcal{G}$ relative to $\Theta\oplus \Theta\oplus -\Theta$
 %$\Theta\in \Omega^k(\mathcal{G})$.
 A Lie groupoid $\mathcal{G}$ carries an affinoid structure with the set of parallelograms given by   $\{(g,h,l,hg^{-1}l)\}$ when $hg^{-1}l$ is well-defined.
 So the affine condition is naturally defined to be that the set of parallelograms is a coisotropic or an isotropic submanifold of $\mathcal{G}\times \mathcal{G}\times \mathcal{G}\times \mathcal{G}$ for $k$-vector fields or $k$-forms respectively. This gives the notions  of affine $k$-vector fields and affine $k$-forms on a Lie groupoid.
 This topic was first studied in \cite{Weinstein} by Weinstein. Then in \cite{Lu} Lu studies the dressing transformation, Poisson cohomology and also the symplectic groupoids of affine Poisson structures on Lie groups. For more information on affine Poisson structures, see also  \cite{B, DS, DLSW, U}.
To define affine $(p,q)$-tensors on a Lie groupoid $\mathcal{G}$,  we consider the tangent and cotangent groupoids of $\mathcal{G}$. A   $(p,q)$-tensor on $\mathcal{G}$ can be viewed as a function on the Lie groupoid $\Gamma:=\oplus^q T\mathcal{G} \oplus^p T^*\mathcal{G}\rightrightarrows \oplus^q TM\oplus^p A^*$. Then   a $(p,q)$-tensor is said to be affine if it is an affine function ($0$-form) on the Lie groupoid $\Gamma$. This definition coincides with the previous definitions for   affine $k$-vector fields and affine $k$-forms.

We shall first make clear the relations between affine and multiplicative structures. For Lie groups, Lu obtained  two multiplicative bivector fields  from an affine bivector field by using the right and left translations \cite{Lu}. We generalize this result to the case of Lie groupoids and obtain two multiplicative $k$-vector fields ($k$-forms,   $(p,q)$-tensors) from an affine $k$-vector field ($k$-form,   $(p,q)$-tensor).  Furthermore, we show that the space of affine structures is a 2-vector space over the vector space of multiplicative structures. Thus affine structures can be viewed as the categorification of multiplicative structures and affine structures define an equivalence relation on multiplicative structures.  For some cases, multiplicative structures are functors, as morphisms of Lie groupoids, then affine structures are natural transformations between these multiplicative structures. Moreover, for affine $k$-vector fields, the Schouten bracket gives rise to a  graded strict Lie $2$-algebra structure on the aforementioned 2-vector space. This recovers the strict Lie $2$-algebra structure on $1$-vector fields in \cite{BL} and is equivalent to the graded Lie $2$-algebra in \cite{BCLX}. We give the geometric support of this graded Lie $2$-algebra structure.
For affine $(1,1)$-tensors,  the composition of  affine $(1,1)$-tensors   defines a strict monoidal category structure on the aforementioned $2$-vector space.

We remark that on Lie groups,   affine $(p,q)$-tensors and multiplicative $(p,q)$-tensors are the same when $q\neq 0$. In particular,  the affine $k$-forms and multiplicative $k$-forms are the same.   An affine $k$-vector field differs from a multiplicative $k$-vector field by an element in $\wedge^k \mathfrak{g}$, where $\mathfrak{g}$ is the Lie algebra of the Lie group. Affine   $k$-vector fields, affine $k$-forms and affine $(1,1)$-tensors on pair groupoids are also analyzed in detail.

The organization of this paper is as follows. In Section $2$, we recall Lie 2-algebras, monoidal categories, tangent and cotangent Lie groupoids.  In Section $3$, we introduce the notion  of affine $k$-vector fields   and clarify the relation with multiplicative $k$-vector fields. We show that the space of affine $k$-vector fields is a $2$-vector space. Moreover, affine multivector fields are closed under the Schouten bracket, we thus get a graded strict Lie $2$-algebra structure on this 2-vector space. In Section $4$,  we introduce the notion  of  affine $k$-forms and study their properties analogously. In Section $5$,  we   define affine tensors on a Lie groupoid and  obtain a strict monoidal category structure on the space of affine $(1,1)$-tensors.

\section{Preliminary}
\subsection{Strict Lie $2$-algebras}
 Lie $2$-algebras are the  categorification of Lie algebras,   whose underlying spaces are $2$-vector spaces. See   \cite{Baez} for more details. Let $\Vect$ be the category of vector spaces.
\begin{Def}{\rm(\cite{Baez})}
 A $2$-vector space is a category in $\Vect$.
\end{Def}
Explicitly, a $2$-vector space is a category $V_1\rightrightarrows V_0$ whose spaces of objects and arrows are both vector spaces, such that the source and target maps $s,t:V_1\to V_0$, the identity-assigning map $\iota:V_0\hookrightarrow V_1$, and the composition $\circ: V_1\times_{V_0} V_1\to V_1$ are all linear.

A $2$-vector space is completely determined by the vector spaces $V_0, V_1$ with the source, target and the identity-assigning map. Actually, given $f\in V_1$, define its arrow part by $\overrightarrow{f}=f-\iota(s(f))$. Then $s(\overrightarrow{f})=0$ and $t(\overrightarrow{f})=t(f)-s(f)$.
So we can identity $f:x\to y$ with the pair $(x,\overrightarrow{f})$. With this notation, for $f:x\to y,g:y\to z$, their composition is defined as $g\circ f=(x,\overrightarrow{f}+\overrightarrow{g})$.
Any arrow $(x,\overrightarrow{f})$ has an inverse $(x+t(\overrightarrow{f}),-\overrightarrow{f})$, so a $2$-vector space is always a Lie groupoid.

A $2$-vector space is equivalent to a $2$-term chain complex of vector spaces. On the one hand, given a $2$-vector space $V_1\rightrightarrows V_0$, the corresponding $2$-term complex is $t: \ker s\to V_0$. On the other hand, given a chain complex $C_1\to C_0$, the $2$-vector space is $C_0\oplus C_1\to C_0$. We refer to \cite{Baez} for the details.

\begin{Def}{\rm(\cite{Baez})}
A strict Lie $2$-algebra is a $2$-vector space $V$ together with a skew-symmetric bilinear functor, the bracket, $[\cdot,\cdot]:V\times V\to V$ satisfying the Jacobi identity.
\end{Def}
A strict Lie $2$-algebra is equivalent to a strict $2$-term $L_\infty$-algebra. Namely, a $2$-term complex $d: C_1\to C_0$ with skew-symmetric brackets $[\cdot,\cdot]: C_0\times C_0\to C_0$ and $[\cdot,\cdot]:C_0\times C_1\to C_1$ satisfying the Jacobi identity and $[da,b]=[a,db], d[x,a]=[x,da]$ for $x\in C_0$ and $a,b\in C_1$. See \cite{Baez} for details.

When the spaces of objects and morphisms are graded vector spaces and the Lie bracket is a graded Lie bracket, we call it a {\bf graded strict Lie $2$-algebra}. We refer to \cite{BCLX} for the explicit definition.

%While for a general Lie $2$-algebra, the skew-symmetric bracket  is relaxed to satisfy the Jacobi identity up to isomorphisms. See \cite{Baez} for the equivalence of Lie $2$-algebras and $2$-term $L_\infty$-algebras.

\subsection{Strict monoidal categories}

A monoidal category is a category $\mathcal{C}$ with a bifunctor $\circ:\mathcal{C}\times \mathcal{C}\to \mathcal{C}$, its product, which is associative up to a natural isomorphism and has an object which is a left unit and a right unit for the product up to natural isomorphisms. For our purpose, we only consider the category with a product which is strict associative and has a strict two-sided identity object.
\begin{Def}{\rm(\cite{ML})}
A strict monoidal category $(\mathcal{C},\circ,e)$ is a category $\mathcal{C}$ with a bifunctor $\circ: \mathcal{C}\times \mathcal{C}\to \mathcal{C}$, which is associative:
\[\circ (\circ\times 1)=\circ (1\times \circ): \mathcal{C}\times \mathcal{C}\times \mathcal{C}\to \mathcal{C},\]
and with an object $e$ which is a left and right unit for $\circ$:
\[\circ (e\times 1)=\mathrm{id}_{\mathcal{C}}=\circ (1\times e):\mathcal{C}\to \mathcal{C}.\]
\end{Def}

The bifunctor $\circ$ here assigns to each pair of objects $x,y$ an object $x\circ y$ and each pair of arrows $f:x\to x', g:y\to y'$ an arrow $f\circ g:x\circ y\to x'\circ y'$. Thus $\circ$ being a bifunctor means that
\begin{eqnarray}\label{bifunctor}
1_x\circ 1_y=1_{x\circ y},\qquad (f'\circ g')\cdot (f\circ g)=(f'\cdot f)\circ (g'\cdot g),
\end{eqnarray}
whenever $f',f$ and $g',g$ are composable. Here $\cdot$ is the multiplication in the category $\mathcal{C}$. The associative law and the unit law in the definition hold both for objects and arrows.

%One example of strict monoidal category is a $2$-vector space with the addition in vector spaces as the product.

\subsection{Tangent and cotangent Lie groupoids}

We recall the definition of the tangent and cotangent Lie groupoids of a Lie groupoid.

For a Lie groupoid $\mathcal{G}\rightrightarrows M$, denote its source and target maps by $s, t:\mathcal{G}\to M$. Two elements $g,h\in \mathcal{G}$ are multiplicable or composable if and only if $s(g)=t(h)$ and their product is written as $g\cdot h$ or simply as $gh$.  Such a pair is called a {\bf multiplicable pair}. We denote the space of multiplicable pairs by $\mathcal{G}^{(2)}$.
Let $A$ be the Lie algebroid of $\mathcal{G}$. For $u\in \Gamma(A)$, its right and left translations $\overrightarrow{u},\overleftarrow{u}\in \mathfrak{X}(\mathcal{G})$ are defined by
\[\overrightarrow{u}(g)=dR_g(u_{t(g)}),\qquad \overleftarrow{u}(g)=-dL_g(d\mathrm{inv}(u_{s(g)}),\]
where $R_g,L_g$ are the right and left multiplications on $\mathcal{G}$ and $\mathrm{inv}:\mathcal{G}\to \mathcal{G}$ is the inverse map in $\mathcal{G}$.

Throughout this paper, by abuse of notations, we use the same notations $s$ and $t$ to denote the source and target of any Lie groupoid and we adopt the same notation to denote a map and its tangent map.

Given a  Lie groupoid $\mathcal{G}\rightrightarrows M$, its tangent bundle $T\mathcal{G}\rightrightarrows TM$ with the differential of the structure maps of $\mathcal{G}$ is again a Lie groupoid.

Its cotangent bundle $T^*\mathcal{G}$ is also equipped with a Lie groupoid structure which is over $A^*$, written as $T^*\mathcal{G}\rightrightarrows A^*$. First we have the inclusion $A^*\hookrightarrow T^*\mathcal{G}$ since $T^*\mathcal{G}|_M\cong T^*M\oplus A^*$.
The source and target maps of an element $\xi\in T^*_g\mathcal{G}$ with $g\in \mathcal{G}$ is
\begin{eqnarray*}\label{right-left}
\langle s(\xi), u\rangle=\langle\xi, \overleftarrow{u}\rangle,\qquad \langle t(\xi),v\rangle=\langle \xi, \overrightarrow{v}\rangle\qquad
\forall u\in A_{s(g)}, v\in A_{t(g)},
\end{eqnarray*}
So for any $u\in \Gamma(A)$, seen as a function on the base manifold $A^*$, we get the formulas
\begin{eqnarray}\label{s cot}
s^* u=\overleftarrow{u},\qquad t^*u=\overrightarrow{u},
\end{eqnarray}
For a multiplicable pair $(g,h)\in \mathcal{G}^{(2)}$, if $\xi\in T_g^*\mathcal{G}$ and $\eta\in T_h^*\mathcal{G}$ are multiplicable,  the product is the element $\xi\cdot\eta\in T^*_{gh}\mathcal{G}$ such that
\begin{eqnarray}\label{multi}
(\xi\cdot\eta)(X\cdot Y)=\xi(X)+\eta(Y),\qquad \forall (X,Y)\in T\mathcal{G}^{(2)},
\end{eqnarray}
where $X\cdot Y\in T_{gh}\mathcal{G}$ is the product of $X\in T_g \mathcal{G}$ and $Y\in T_h \mathcal{G}$ in the Lie groupoid $T\mathcal{G}$.
%for $X\in T_g \mathcal{G}$ and $Y\in T_h \mathcal{G}$ such that $ds(X)=dt(Y)$.
See for example \cite{KS, LL} for more explanation of the cotangent groupoid.

\section{Affine $k$-vector fields on a Lie groupoid}

An affinoid structure on a space $X$ is a subset of $X^4$ whose elements are seen as parallelograms, with axioms modeled on the properties of the quaternary relation $\{(g,h,l,k)|hg^{-1}=kl^{-1}\}$ on a group or a groupoid \cite{Weinstein}.
In particular, a groupoid has an affinoid structure with parallelograms given by the relation \[\{(g,h,l,hg^{-1}l);s(g)=s(h),t(g)=t(l)\}.\]
A $k$-vector field on a Lie groupoid is called affine when it is compatible with the affinoid structure in the sense that the submanifold of parallelograms  is coisotropic. While a $k$-vector field is multiplicative when the graph $\{(g,h,gh);s(g)=t(h)\}$ of the multiplication, or space of triangles, is coisotropic.

Let $V$ be a vector space and $\Pi\in \wedge^k V$. A subspace $W\subset V$ is coisotropic with respect to $\Pi$ if
\[\Pi(\xi_1,\cdots,\xi_k)=0,\qquad \forall \xi_1,\cdots,\xi_k\in W^0,\]
where $W^0$ is the annihilator space of $W$, namely, $W^0=\{\xi\in V^*;\xi(w)=0,\forall w\in W\}$.
More generally, for a manifold $M$   and $\Pi\in \mathfrak{X}^k(M)$, a submanifold $S$ of $M$ is   coisotropic with respect to $\Pi$ if $T_x S$ is coisotropic with respect to $\Pi_x$ for all $x\in S$.

The following definition is motivated by Weinstein's definition for  affine Poisson structures on a Lie groupoid \cite{Weinstein}.

\begin{Def}\label{aff vf}
A $k$-vector field $\Pi\in \mathfrak{X}^k(\mathcal{G})$ on a Lie groupoid $\mathcal{G}$ is called {\bf affine} if the submanifold \[S:=\{(g,h,l,hg^{-1}l); s(g)=s(h),t(g)=t(l)\}\subset \mathcal{G}\times \mathcal{G}\times \mathcal{G}\times \mathcal{G}\]  is coisotropic with respect to $\Pi\oplus (-1)^{k+1}\Pi\oplus (-1)^{k+1} \Pi\oplus \Pi$.
\end{Def}

%Explicitly, one has
%\[(\Pi\times (-1)^{k+1}\Pi\times (-1)^{k+1}\Pi\times \Pi)(\xi_1,\cdots,\xi_k)=0,\qquad \forall \xi_1,\cdots,\xi_k\in S^\perp,\]
%where \[S^\perp:=\{\xi\in T_\lambda^*(\mathcal{G}\times \mathcal{G}\times \mathcal{G}\times \mathcal{G})| \lambda\in S,\xi(v)=0,\forall v\in T_\lambda S\}.\]

Comparatively,  a $k$-vector field on a Lie groupoid $\mathcal{G}$ is multiplicative \cite{Xu,MX,Xu28} if it satisfies that the submanifold $\{(g,h,gh);s(g)=t(h)\}\subset \mathcal{G}\times \mathcal{G}\times \mathcal{G}$ is coisotropic relative to $\Pi\oplus \Pi\oplus (-1)^{k+1} \Pi$.

It is shown in
 \cite{Xu,Weinstein} that a $k$-vector field $\Pi\in \mathfrak{X}^k(\mathcal{G})$ is multiplicative if and only if it is affine and the base manifold $M$ is coisotropic with respect to $\Pi$.

As shown in \cite{Xu28} for the $k=2$ case, for any $\mu\in T^*_{gh} \mathcal{G}$, the covector $(-\mu, L_{\mathcal{X}}^*\mu, R_{\mathcal{Y}}^*\mu,-L_{\mathcal{X}}^*R_{\mathcal{Y}}^*\mu)$ is conormal to $S$, that is, an element in the annihilator space of $TS$ at $(gh,h,g,s(g))$, where $\mathcal{X}$ and $\mathcal{Y}$ are bisections passing through $g$ and $h$. Another two classes of vectors conormal to $S$ are  $(-t^*\eta,t^*\eta, 0,0)$ and $(-s^*\xi,s^*\xi,0,0)$ for $\eta\in T_{t(g)}^*M$ and  $\xi\in T_{s(g)}^* M$.  We thus get an explicit description of affine $k$-vector fields:
\begin{Lem}\label{affine vf}
A $k$-vector field $\Pi\in \mathfrak{X}^k(\mathcal{G})$ on a Lie groupoid $\mathcal{G}$ is affine if and only if the following two conditions hold:
\begin{itemize}
\item[\rm(1)]
for any $(g,h)\in \mathcal{G}^{(2)}$,
\begin{eqnarray}\label{affine}
\Pi(gh)=L_{\mathcal{X}} \Pi(h)+R_{\mathcal{Y}} \Pi(g)-L_{\mathcal{X}}\circ R_{\mathcal{Y}}(\Pi(s(g))),
\end{eqnarray}
where $\mathcal{X}$ and $\mathcal{Y}$ are any two local bisections passing through $g$ and $h$ respectively.
\item[\rm(2)] for any $\xi\in \Omega^1(M)$, $\iota_{t^*\xi} \Pi$ is right invariant.
\end{itemize}
\end{Lem}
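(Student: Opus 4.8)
The plan is to convert the coisotropy requirement of Definition \ref{aff vf} into the statement that the $k$-vector
\[
\widetilde{\Pi}:=\Pi\oplus(-1)^{k+1}\Pi\oplus(-1)^{k+1}\Pi\oplus\Pi
\]
annihilates every $k$-tuple of covectors conormal to $S$ at the point $p=(gh,h,g,s(g))$, and then to extract (1) and (2) by testing against the conormal families displayed just before the statement (the translation-type covectors built from $\mu\in T^*_{gh}\mathcal{G}$, and the two base families coming from the source and target constraints). Two elementary facts drive every computation. First, the pairing of a direct-sum multivector splits over the factors: writing $\beta_i=(\beta_i^1,\beta_i^2,\beta_i^3,\beta_i^4)$,
\[
\widetilde{\Pi}(\beta_1,\dots,\beta_k)=\sum_{j=1}^{4}\epsilon_j\,\Pi(p_j)\bigl(\beta_1^{j},\dots,\beta_k^{j}\bigr),\qquad (p_j)=(gh,h,g,s(g)),\ (\epsilon_j)=(1,(-1)^{k+1},(-1)^{k+1},1).
\]
Second, the adjunction $P(\phi^*\mu_1,\dots,\phi^*\mu_k)=(\phi_*P)(\mu_1,\dots,\mu_k)$ applied to $\phi=L_{\mathcal{X}},R_{\mathcal{Y}}$, together with $s\circ L_{\mathcal{X}}=s$ and $t\circ R_{\mathcal{Y}}=t$ (so $L_{\mathcal{X}}^*$ fixes $s$-pullbacks and $R_{\mathcal{Y}}^*$ fixes $t$-pullbacks) and the commutativity $L_{\mathcal{X}}\circ R_{\mathcal{Y}}=R_{\mathcal{Y}}\circ L_{\mathcal{X}}$.

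By multilinearity and skew-symmetry it is enough to test $\widetilde{\Pi}$ on tuples sorted by how many entries are translation-type. When all $k$ entries are translation-type, I feed $\mu_1,\dots,\mu_k\in T^*_{gh}\mathcal{G}$ into the splitting; each of the four terms becomes a pushforward of $\Pi$ paired with $\mu_1,\dots,\mu_k$, with an overall sign $(-1)^k$. Stripping the arbitrary $\mu_i$ collapses the vanishing to
\[
\Pi(gh)=L_{\mathcal{X}}\Pi(h)+R_{\mathcal{Y}}\Pi(g)-L_{\mathcal{X}}R_{\mathcal{Y}}\Pi(s(g)),
\]
which is exactly \eqref{affine}; thus the top stratum is equivalent to (1).

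The remaining strata contain at least one base covector, and here each base covector vanishes in two of the four slots, so two of the four terms of the splitting disappear and the survivors couple $\Pi$ at only two of the points $gh,h,g$. For one target-constraint covector and $k-1$ translation-type covectors, the terms at $h$ and $s(g)$ drop out; using $R_{\mathcal{Y}}^*t^*\eta=t^*\eta$ and $\iota_{t^*\eta}R_{\mathcal{Y}}=R_{\mathcal{Y}}\iota_{t^*\eta}$, the vanishing becomes $\iota_{t^*\eta}\Pi(gh)=R_{\mathcal{Y}}\bigl(\iota_{t^*\eta}\Pi(g)\bigr)$ for all $\eta$ and all multiplicable pairs, i.e. the right-invariance of $\iota_{t^*\xi}\Pi$ — condition (2). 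The source-constraint covectors produce, symmetrically, a left-invariance statement for $\iota_{s^*\xi}\Pi$; I would show that this is already forced by demanding that the right-hand side of \eqref{affine} be independent of the chosen bisections, so that it need not be recorded as a separate item and (2) is the only invariance condition left. Finally, strata carrying two or more base covectors add nothing new: with (1) in force, their vanishing follows from the one-base-covector identities by multilinearity.

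For the converse I would run the same computations backwards: under (1) and (2) the splitting shows $\widetilde{\Pi}$ kills each generator tuple — the all-translation tuples by (1), the mixed tuples by (2) and the source-constraint identity it entails — so $T_pS$ is coisotropic at every $p$ and $\Pi$ is affine. I expect the genuine difficulty to be entirely in the mixed strata: tracking the four base points $gh,h,g,s(g)$ and the signs of the $s$- and $t$-pullbacks under left and right translation, and in particular checking that the source-constraint (left-invariance) identity is subsumed by the well-definedness of \eqref{affine}, so that the single condition (2) genuinely suffices.
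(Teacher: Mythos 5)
Your overall architecture is the one the paper itself gestures at: the paper gives no proof of this lemma beyond listing the conormal covectors and citing [Xu28], so the task is exactly to test $\Pi\oplus(-1)^{k+1}\Pi\oplus(-1)^{k+1}\Pi\oplus\Pi$ against $k$-tuples from the spanning families and stratify by the number of base covectors. Your treatment of the all-translation stratum (yielding \eqref{affine}) and of the target-constraint stratum (yielding condition (2)) is correct; note that you are implicitly and correctly using the covector $(-t^*\eta,0,t^*\eta,0)$ rather than the paper's displayed $(-t^*\eta,t^*\eta,0,0)$, which cannot be conormal since the constraint is $t(a)=t(c)$, not $t(a)=t(b)$.

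The genuine gap is your disposal of the source-constraint covectors. A dimension count ($\operatorname{codim}S=\dim\mathcal{G}+2\dim M$, versus $\dim\mathcal{G}$ translation-type directions and $\dim M$ target-constraint directions) shows the family $(-s^*\xi,s^*\xi,0,0)$ contributes genuinely independent conormal directions, and its stratum produces the left-invariance of $\iota_{s^*\xi}\Pi$ --- a third condition not listed in the lemma. You assert this is ``forced by demanding that the right-hand side of \eqref{affine} be independent of the chosen bisections,'' but this is precisely the nontrivial content of the lemma and your proposal does not establish it. Contracting \eqref{affine} with $s^*\xi$ (using $L_{\mathcal{X}}^*s^*=s^*$ and $R_{\mathcal{Y}}^*s^*\xi=s^*(\phi_{\mathcal{Y}}^*\xi)$) only reduces left-invariance at $(g,h)$ to left-invariance at $(g,1_{s(g)})$, which is the same statement --- so this route is circular. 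The first-order variation of \eqref{affine} in the bisection $\mathcal{Y}$ yields only the wedge condition $u\wedge R_{\mathcal{Y}}\bigl[\iota_{s^*\xi}\Pi(g)-L_{\mathcal{X}}\iota_{s^*\xi}\Pi(1_{s(g)})\bigr]=0$ for $u$ ranging over $dm(0_g,\ker dt_h)$, which forces the bracket to vanish only under the dimension restriction $k-1<\dim\mathcal{G}-\dim M$. A complete proof must close this step another way --- e.g.\ by deducing from (1) and (2) that $\Pi_r=\Pi-\overrightarrow{\pi}$ is multiplicative in the graph-coisotropic sense and invoking the known left-invariance of $\iota_{s^*\xi}$ of a multiplicative multivector field, as in [Xu28]. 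Separately, your dismissal of the $\geq 2$-base-covector strata ``by multilinearity'' overlooks the mixed stratum with one $s$- and one $t$-covector, whose only surviving term is $\iota_{s^*\xi}\iota_{t^*\eta}\Pi(gh)=0$; this is not a formal consequence of the one-covector identities but does follow from (2) together with the geometric fact that right-invariant multivector fields are sections of $\wedge^{\bullet}\ker(ds)$, so it should be recorded.
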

%\begin{Rm}
%Multivector fields satisfying (\ref{affine}) are called   affine multivector fields  by Xu in \cite{Xu28}. Here we have an extra condition coming naturally from the definition of affine $k$-vector fields.
%\end{Rm}
An equivalent description of (\ref{affine}) is that
$[\Pi, \overrightarrow{X}]$ is right-invariant for $X\in \Gamma(A)$ \cite{MX}.

On the other hand, we shall see that one affine $k$-vector field defines two multiplicative $k$-vector fields.

 For a $k$-vector field $\Pi\in \mathfrak{X}^k(\mathcal{G})$,  its restriction on $M$ has $k+1$ components:
\[\Pi|_M\in \Gamma(\wedge^k T\mathcal{G}|_M)\cong\Gamma(\wedge^k (TM\oplus A))=\Gamma(\wedge^k TM\oplus \wedge^{k-1} TM\wedge A\oplus \cdots \oplus \wedge^k A).\]
We denote by $\pi$ the $\wedge^k A$-component:
\begin{eqnarray*}
\pi=\mathrm{pr}_{\wedge^k A} \Pi|_M\in \Gamma(\wedge^k A).
\end{eqnarray*}
So the base manifold $M$ is coisotropic with respect to $\Pi\in \mathfrak{X}^k(\mathcal{G})$ if $\pi=0$.
\begin{Pro}\label{leftright} Let $\Pi$ be a  $k$-vector  field on the Lie groupoid $\mathcal{G}$ with $\pi=\mathrm{pr}_{\wedge^k A} \Pi|_M$.
Define
\begin{eqnarray}\label{affmul}
\Pi_r=\Pi-\overrightarrow{\pi},\qquad \Pi_l=\Pi-\overleftarrow{\pi}.
\end{eqnarray}
Then $\Pi$ is affine if and only if $\Pi_l$  or $\Pi_r$ is a multiplicative $k$-vector field on $\mathcal{G}$. Here the right and left translations are
\[\overrightarrow{\pi}(g):=R_{g}(\pi_{t(g)}),\qquad \overleftarrow{\pi}(g):=-L_g(\mathrm{inv}(\pi_{s(g)})),\qquad \forall g\in \mathcal{G}.\]
\end{Pro}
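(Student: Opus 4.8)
The plan is to reduce the statement to two facts: first, that the right and left translations $\overrightarrow{\pi}$ and $\overleftarrow{\pi}$ of $\pi\in\Gamma(\wedge^k A)$ are themselves affine $k$-vector fields whose $\wedge^k A$-component along $M$ is exactly $\pi$; and second, the criterion recalled from \cite{Xu,Weinstein} that a $k$-vector field is multiplicative precisely when it is affine and $M$ is coisotropic, i.e. when its $\wedge^k A$-component vanishes. Granting the first fact, I note that conditions (1) and (2) of Lemma \ref{affine vf} are linear and homogeneous in $\Pi$, so the affine $k$-vector fields form a linear subspace of $\mathfrak{X}^k(\mathcal{G})$. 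Since $\overrightarrow{\pi}$ is affine, $\Pi_r=\Pi-\overrightarrow{\pi}$ is affine if and only if $\Pi$ is; and because $\mathrm{pr}_{\wedge^k A}(\overrightarrow{\pi}|_M)=\pi$, the $\wedge^k A$-component of $\Pi_r|_M$ vanishes, so $M$ is automatically coisotropic with respect to $\Pi_r$. The recalled criterion then yields the chain of equivalences: $\Pi_r$ is multiplicative if and only if $\Pi_r$ is affine if and only if $\Pi$ is affine. The identical argument with $\overleftarrow{\pi}$ in place of $\overrightarrow{\pi}$ settles the case of $\Pi_l$.

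It remains to verify the first fact, which I would do by checking conditions (1) and (2) of Lemma \ref{affine vf} directly. For $\overrightarrow{\pi}$ and condition (1), right-invariance gives $\overrightarrow{\pi}(gh)=R_h(\overrightarrow{\pi}(g))=R_{\mathcal{Y}}(\overrightarrow{\pi}(g))$ for any bisection $\mathcal{Y}$ through $h$; moreover $\overrightarrow{\pi}(s(g))=\pi_{s(g)}$ and $R_{\mathcal{Y}}(\overrightarrow{\pi}(s(g)))=R_h(\pi_{s(g)})=\overrightarrow{\pi}(h)$, whence $L_{\mathcal{X}}\overrightarrow{\pi}(h)-L_{\mathcal{X}}R_{\mathcal{Y}}\overrightarrow{\pi}(s(g))=0$ and the right-hand side of (\ref{affine}) collapses to $R_{\mathcal{Y}}\overrightarrow{\pi}(g)=\overrightarrow{\pi}(gh)$. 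For condition (2), since $t\circ R_h=t$ the form $t^*\xi$ is right-invariant, so $\iota_{t^*\xi}\overrightarrow{\pi}$ is a contraction of right-invariant data and is right-invariant. For $\overleftarrow{\pi}$ the argument is symmetric: left-invariance gives $\overleftarrow{\pi}(g)=L_{\mathcal{X}}\overleftarrow{\pi}(s(g))$, hence $R_{\mathcal{Y}}\overleftarrow{\pi}(g)=L_{\mathcal{X}}R_{\mathcal{Y}}\overleftarrow{\pi}(s(g))$ by the commuting of left and right translations, while $\overleftarrow{\pi}(gh)=L_{\mathcal{X}}\overleftarrow{\pi}(h)$, so (\ref{affine}) again holds; and condition (2) is automatic because $\overleftarrow{\pi}$ is tangent to the $t$-fibres, so $\iota_{t^*\xi}\overleftarrow{\pi}=0$.

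The one point requiring care—and the place I expect the only genuine bookkeeping—is the computation of the restrictions $\overrightarrow{\pi}|_M$ and $\overleftarrow{\pi}|_M$, in particular the sign in the definition $\overleftarrow{\pi}(g)=-L_g(\mathrm{inv}(\pi_{s(g)}))$. Evaluating at a unit $x\in M$, left and right multiplication by $x$ act as the identity on tangent vectors, so $\overrightarrow{\pi}|_M=\pi$ directly, while $\overleftarrow{\pi}|_M=-\mathrm{inv}(\pi)$ must be shown to have $\wedge^k A$-component equal to $\pi$ as well; this follows from the behaviour of $d\,\mathrm{inv}$ on $A$ (the $k=1$ relation $\overleftarrow{u}|_M=u$), and the sign in the definition is calibrated precisely so that this persists in every degree $k$. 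Once these restrictions are confirmed, both facts above are in place and the asserted equivalence follows.
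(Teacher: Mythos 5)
Your proposal is correct and follows essentially the same route as the paper: both rest on the criterion that a $k$-vector field is multiplicative iff it is affine and $M$ is coisotropic (i.e.\ its $\wedge^k A$-component vanishes), observe that $\mathrm{pr}_{\wedge^k A}\overrightarrow{\pi}|_M=\mathrm{pr}_{\wedge^k A}\overleftarrow{\pi}|_M=\pi$ so that $M$ is automatically coisotropic for $\Pi_r$ and $\Pi_l$, and check that subtracting $\overrightarrow{\pi}$ or $\overleftarrow{\pi}$ does not affect conditions (1) and (2) of Lemma \ref{affine vf}. The only cosmetic difference is that you verify condition (1) for $\overrightarrow{\pi}$ and $\overleftarrow{\pi}$ directly from the identity \eqref{affine} and invoke linearity, whereas the paper routes the same step through the equivalent characterization that $[\Pi,\overrightarrow{X}]$ be right-invariant, using $[\overrightarrow{\pi},\overrightarrow{X}]=\overrightarrow{[\pi,X]}$ and $[\overleftarrow{\pi},\overrightarrow{X}]=0$.
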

\begin{proof}
It is known from \cite{MX} that a $k$-vector field $\Pi$ on $\mathcal{G}$ satisfying $\Pi(gh)=L_{\mathcal{X}} \Pi(h)+R_{\mathcal{Y}} \Pi(g)-L_{\mathcal{X}}\circ R_{\mathcal{Y}}(\Pi(s(g)))$ is equivalent to say  that $[\Pi,\overrightarrow{X}]$ is right-invariant for any $X\in \Gamma(A)$.

For any $X\in \Gamma(A)$,  $[\Pi_r,\overrightarrow{X}]$ is right-invariant if and only if $[\Pi,\overrightarrow{X}]$ is right-invariant. So $\Pi$ satisfies (\ref{affine}) if and only if $\Pi_r$ satisfies (\ref{affine}). Besides, since $t\circ R_g=t$, it is direct to see that $\iota_{t^* \xi} \overrightarrow{\pi}$ is right-invariant for $\xi\in \Omega^1(M)$. Also it is obvious that $M$ is coisotropic with respect to $\Pi_r$. We conclude that $\Pi_r$ is multiplicative if and only if $\Pi$ is affine.

For $\Pi_l$, by the fact that $[\Pi_l,\overrightarrow{X}]=[\Pi,\overrightarrow{X}]$, $\iota_{t^*\xi} \overleftarrow{\pi}=0$ and $M$ is coisotropic with respect to $\Pi_l$, we obtain the conclusion that $\Pi$ is affine if and only if $\Pi_l$ is multiplicative.
\end{proof}

\begin{Ex}
 Multiplicative $k$-vector fields on $\mathbbm{R}^n$ are linear $k$-vector fields. An Affine $k$-vector field is of the form \[\sum f^{i_1,\cdots,i_k}(x)\frac{\partial}{\partial x^{i_1}}\wedge\cdots \wedge \frac{\partial}{\partial x^{i_k}}+\sum c^{i_1,\cdots,i_k} \frac{\partial}{\partial x^{i_1}}\wedge\cdots \wedge \frac{\partial}{\partial x^{i_k}},\] where $f^{i_1,\cdots,i_k}(x)$ is a linear function on $\mathbbm{R}^n$ and $c^{i_1,\cdots,i_k}$ is a constant. Namely, an affine $k$-vector field is a sum of a linear $k$-vector field and a constant $k$-vector field.
\end{Ex}

\begin{Ex}
%It is known that the Poisson groupoid structures on the pair groupoid $M\times M\rightrightarrows M$ are of the form $(\Pi,-\Pi)$ with $\Pi$ being a Poisson structure on $M$. We claim the
Multiplicative $k$-vector fields on the pair groupoid $M\times M\rightrightarrows M$ all have the form
$(\Pi,-\Pi)$ for $\Pi\in \mathfrak{X}^k(M)$ and affine $k$-vector fields on $M\times M$ are of the form $(\Pi,\Pi')$ for two $k$-vector fields $\Pi,\Pi'\in \mathfrak{X}^k(M)$.
\end{Ex}

\begin{Ex}
Let $\mathcal{G}$ be a Lie groupoid with Lie algebroid $A$. For any $\pi\in \Gamma(\wedge^k A)$, the $k$-vector field $\Pi=\overrightarrow{\pi}$ is affine and the associated two multiplicative $k$-vector fields are $\Pi_r=0$ and $\Pi_l=\overrightarrow{\pi}-\overleftarrow{\pi}$.
\end{Ex}

The space $\mathfrak{X}^k_{\mathrm{aff}}(\mathcal{G})$ of affine $k$-vector fields is a vector space with the space $\mathfrak{X}^k_{\mathrm{mult}}(\mathcal{G})$ of multiplicative $k$-vector fields as a linear subspace.
\begin{Thm}\label{2-group for vf}
We have a $2$-vector space
\[\mathfrak{X}^k_{\mathrm{aff}}(\mathcal{G})\rightrightarrows \mathfrak{X}^k_{\mathrm{mult}}(\mathcal{G}),\] where the groupoid structure is as follows:
 the source and target maps are given by $s(\Pi)=\Pi_r$ and $t(\Pi)=\Pi_l$ as defined in \eqref{affmul}, and the multiplication $*$ is
\[\Pi* \Pi'=\Pi+\overleftarrow{\pi'},\]
for a pair $\Pi,\Pi'$ of affine $k$-vector fields such that $\Pi_r=\Pi'_l$. Here $\pi'=\mathrm{pr}_{\wedge^k A} \Pi'|_M$ is the $\wedge^k A$-component of $\Pi'|_M$.\end{Thm}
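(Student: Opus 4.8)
The plan is to verify that the data $(\mathfrak{X}^k_{\mathrm{aff}}(\mathcal{G})\rightrightarrows\mathfrak{X}^k_{\mathrm{mult}}(\mathcal{G}),s,t,\iota,*)$ form a groupoid all of whose structure maps are linear, which is exactly a $2$-vector space in the sense of Section 2.1. Abbreviate $V_0=\mathfrak{X}^k_{\mathrm{mult}}(\mathcal{G})$, $V_1=\mathfrak{X}^k_{\mathrm{aff}}(\mathcal{G})$, and for $\Pi\in V_1$ write $\pi=\mathrm{pr}_{\wedge^k A}\Pi|_M$. First I record the well-definedness and linearity of the structure maps. By Proposition \ref{leftright}, $s(\Pi)=\Pi_r$ and $t(\Pi)=\Pi_l$ indeed lie in $V_0$; since $\Pi\mapsto\pi$ is linear and $\pi\mapsto\overrightarrow{\pi},\overleftarrow{\pi}$ are linear, both $s$ and $t$ are linear. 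The map $\iota$ is the inclusion $V_0\hookrightarrow V_1$: a multiplicative $\Pi$ has $M$ coisotropic, i.e. $\pi=0$, so $\overrightarrow{\pi}=\overleftarrow{\pi}=0$ and $s\circ\iota=t\circ\iota=\mathrm{id}$.

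The crux is the composition. Fix a composable pair, i.e. $\Pi_r=\Pi'_l$, and set $\Theta=\Pi*\Pi'=\Pi+\overleftarrow{\pi'}$. The composability relation reads $\Pi-\overrightarrow{\pi}=\Pi'-\overleftarrow{\pi'}$, which rewrites $\Theta$ in the equivalent form $\Theta=\Pi'+\overrightarrow{\pi}$. I would use this second expression to compute the component $\theta:=\mathrm{pr}_{\wedge^k A}\Theta|_M$ cleanly: evaluating the right-translation at a unit $x$ gives $\overrightarrow{\pi}(x)=\pi_x$ (since $R_x=\mathrm{id}$), so $\mathrm{pr}_{\wedge^k A}\overrightarrow{\pi}|_M=\pi$ and therefore $\theta=\pi'+\pi$. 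By linearity of the translations this yields $\overleftarrow{\theta}=\overleftarrow{\pi}+\overleftarrow{\pi'}$ and $\overrightarrow{\theta}=\overrightarrow{\pi}+\overrightarrow{\pi'}$, whence
\[\Theta_l=\Theta-\overleftarrow{\theta}=\Pi-\overleftarrow{\pi}=\Pi_l,\qquad \Theta_r=\Theta-\overrightarrow{\theta}=\Pi'-\overrightarrow{\pi'}=\Pi'_r.\]
Since $\Pi_l$ is multiplicative, Proposition \ref{leftright} shows $\Theta$ is affine, and these identities give $t(\Theta)=\Theta_l=\Pi_l=t(\Pi)$ and $s(\Theta)=\Theta_r=\Pi'_r=s(\Pi')$, the correct source and target for a groupoid composite.

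It then remains to check the groupoid axioms and linearity. For associativity, the additivity $\theta=\pi+\pi'$ together with linearity of $\overleftarrow{(\cdot)}$ shows that both $(\Pi*\Pi')*\Pi''$ and $\Pi*(\Pi'*\Pi'')$ equal $\Pi+\overleftarrow{\pi'}+\overleftarrow{\pi''}$. The unit laws follow because $\iota(s(\Pi))=\Pi_r$ and $\iota(t(\Pi))=\Pi_l$ have vanishing $\wedge^k A$-component: $\Pi*\iota(s(\Pi))=\Pi$ and $\iota(t(\Pi))*\Pi=\Pi_l+\overleftarrow{\pi}=\Pi$. For inverses one checks that $\Pi^{-1}:=\Pi-\overrightarrow{\pi}-\overleftarrow{\pi}$ is affine with $\wedge^k A$-component $-\pi$, satisfies $s(\Pi^{-1})=\Pi_l$, $t(\Pi^{-1})=\Pi_r$, and $\Pi*\Pi^{-1}=\Pi_l$, $\Pi^{-1}*\Pi=\Pi_r$. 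Finally $*$ is linear on $V_1\times_{V_0}V_1$, being the sum of the linear maps $(\Pi,\Pi')\mapsto\Pi$ and $(\Pi,\Pi')\mapsto\overleftarrow{\pi'}$; this also identifies $*$ with the canonical composition $g\circ f=(x,\overrightarrow{f}+\overrightarrow{g})$ of Section 2.1, the arrow part of $\Pi$ being $\overrightarrow{\pi}$.

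The only genuinely delicate point is the identity $\theta=\pi+\pi'$, equivalently $\mathrm{pr}_{\wedge^k A}\overleftarrow{\pi'}|_M=\pi'$: computing this directly from the left-translation requires the differential of the inversion acting on $\wedge^k A$ and is sign-sensitive in $k$. I would therefore avoid it, using instead the composability rewriting $\Theta=\Pi'+\overrightarrow{\pi}$, for which $\mathrm{pr}_{\wedge^k A}\overrightarrow{\pi}|_M=\pi$ is transparent. Everything else is linear bookkeeping resting on Proposition \ref{leftright}.
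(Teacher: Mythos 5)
Your proof is correct and follows the same global strategy as the paper's: compute the $\wedge^k A$-component of the composite, deduce $s(\Pi*\Pi')=s(\Pi')$ and $t(\Pi*\Pi')=t(\Pi)$, then check associativity and linearity. The one place you genuinely diverge is the delicate identity $\mathrm{pr}_{\wedge^k A}(\Pi*\Pi')|_M=\pi+\pi'$. The paper obtains it by proving $\mathrm{pr}_{\wedge^k A}\,\overleftarrow{\pi'}|_M=\pi'$ directly, decomposing $-\mathrm{inv}(X_1)\wedge\cdots\wedge(-\mathrm{inv}(X_k))=(X_1-\rho(X_1))\wedge\cdots\wedge(X_k-\rho(X_k))$ and projecting; you instead use the composability relation to rewrite $\Pi+\overleftarrow{\pi'}=\Pi'+\overrightarrow{\pi}$ and read the component off the right translation, where $\overrightarrow{\pi}(1_x)=\pi_x$ is immediate. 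Your route avoids the inversion computation entirely (note it works for the inverse arrow too, via $\Pi^{-1}=\Pi_l-\overrightarrow{\pi}$), at the cost of only establishing the component formula for composable pairs rather than the general left-translation identity, which the paper's version gives as a reusable fact (it reappears, e.g., in the proof of Theorem \ref{Lie 2}). Your writeup is also somewhat more complete than the paper's: you verify explicitly that $\Pi*\Pi'$ is again affine (closure, via $(\Pi*\Pi')_l=\Pi_l$ and Proposition \ref{leftright}), the unit laws, and the inverses, whereas the paper leaves these implicit, relying on the general fact that a $2$-vector space is automatically a groupoid. Both arguments are sound.
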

\begin{proof}
We first verify the groupoid structure. It follows from $\Pi_r=\Pi'_l$ that $\Pi-\overrightarrow{\pi}=\Pi'-\overleftarrow{\pi'}$. Then
\[s(\Pi* \Pi')=\Pi+\overleftarrow{\pi'}-\overrightarrow{\pi}-\overrightarrow{\pi'}=\Pi'-\overrightarrow{\pi'}=\Pi'_r=s(\Pi'),\]
and
\[t(\Pi* \Pi')=\Pi+\overleftarrow{\pi'}-\overleftarrow{\pi}-\overleftarrow{\pi'}=\Pi_l=t(\Pi).\]
Here we have used the fact that \[\mathrm{pr}_{\wedge^k A} \overleftarrow{\pi'}|_M=-\mathrm{pr}_{\wedge^k A}\mathrm{inv}(\pi')=\mathrm{pr}_{\wedge^k A} (\pi'-\rho(\pi'))=\pi'.\] Here if $\pi'=X_1\wedge\cdots \wedge X_k$, we have 
\[-\mathrm{inv}(\pi')=-\mathrm{inv}(X_1)\wedge \cdots \wedge (-\mathrm{inv}(X_k))=(X_1-\rho(X_1))\wedge\cdots (X_k-\rho(X_k)).\]
For the associativity of this multiplication, let $\Pi''$ be another affine $k$-vector field such that $\Pi'_r=\Pi''_l$. We see
\[(\Pi* \Pi')* \Pi''=\Pi+\overleftarrow{\pi'}+\overleftarrow{\pi''}=\Pi* (\Pi' *\Pi'').\]
Also, it is immediate that all the groupoid structures are linear. This gives a $2$-vector space structure on $\mathfrak{X}^k_{\aff}(\mathcal{G})$.
\end{proof}

 For $\Pi\in \mathfrak{X}^k_{\aff}(\mathcal{G})$,  its inverse in this $2$-vector space is
 \begin{eqnarray}\label{Pi inverse}
 \Pi^{-1}=\Pi-(\overrightarrow{\pi}+\overleftarrow{\pi}),\qquad \pi=\mathrm{pr}_{\wedge^k A} \Pi|_M.
 \end{eqnarray}
\begin{Rm}
In \cite{Lu}, Lu considered the case when $\Pi\in \mathfrak{X}^2(\mathcal{G})$ is an affine Poisson vector field on a Lie group. This affine vector field $\Pi^{-1}$ is also Poisson and is called the {\bf opposite affine Poisson structure} of $\Pi$.
 We see here that  it is actually the inverse of $\Pi$ in the $2$-vector space given above.
\end{Rm}

 \begin{Cor}
The associated $2$-term chain complex of vector spaces for the $2$-vector space in the above theorem is
\[\Gamma(\wedge^k A)\to \mathfrak{X}^k_{\mathrm{mult}}(\mathcal{G}),\qquad \pi\mapsto \overrightarrow{\pi}-\overleftarrow{\pi}.\]
 \end{Cor}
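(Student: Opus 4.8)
The plan is to apply the explicit dictionary between $2$-vector spaces and $2$-term chain complexes recalled in the preliminaries: for a $2$-vector space $V_1\rightrightarrows V_0$ with source $s$ and target $t$, the associated complex is $t\colon\ker s\to V_0$. Here $V_1=\mathfrak{X}^k_{\aff}(\mathcal{G})$, $V_0=\mathfrak{X}^k_{\mathrm{mult}}(\mathcal{G})$, and $s,t$ are the maps $s(\Pi)=\Pi_r$, $t(\Pi)=\Pi_l$ of Theorem~\ref{2-group for vf}. Thus everything reduces to two tasks: (i) identify $\ker s$ with $\Gamma(\wedge^k A)$, and (ii) check that under this identification the target map $t$ is precisely $\pi\mapsto\overrightarrow{\pi}-\overleftarrow{\pi}$.

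For (i) I would argue as follows. Since $s(\Pi)=\Pi_r=\Pi-\overrightarrow{\pi}$ with $\pi=\mathrm{pr}_{\wedge^k A}\Pi|_M$, an affine $k$-vector field lies in $\ker s$ exactly when $\Pi=\overrightarrow{\pi}$. This suggests the candidate isomorphism $\Gamma(\wedge^k A)\to\ker s$, $\pi\mapsto\overrightarrow{\pi}$. It is well defined and lands in $\ker s$ because $\overrightarrow{\pi}$ is affine with $(\overrightarrow{\pi})_r=0$, as computed in the last example above, and it is manifestly linear. Its inverse should be $\Pi\mapsto\mathrm{pr}_{\wedge^k A}\Pi|_M$, and the only point needing verification is that $\mathrm{pr}_{\wedge^k A}(\overrightarrow{\pi})|_M=\pi$; this holds because at a unit $x\in M$ the right translation $R_x$ acts as the identity on $A_x$, so $\overrightarrow{\pi}|_M=\pi$ already takes its full value in $\wedge^k A$. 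Hence $\pi\mapsto\overrightarrow{\pi}$ is a linear isomorphism $\Gamma(\wedge^k A)\xrightarrow{\sim}\ker s$.

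For (ii), transporting the target map through this isomorphism gives $\pi\mapsto t(\overrightarrow{\pi})=(\overrightarrow{\pi})_l=\overrightarrow{\pi}-\overleftarrow{\pi}$, again by the last example. This is exactly the stated differential, and it indeed takes values in $\mathfrak{X}^k_{\mathrm{mult}}(\mathcal{G})$ since it equals $(\overrightarrow{\pi})_l$, the target of an affine $k$-vector field, which is multiplicative by Proposition~\ref{leftright}.

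I expect no serious obstacle here: the statement is essentially a direct reading of the source/target data of Theorem~\ref{2-group for vf} through the $2$-vector-space/complex dictionary. The only step requiring a genuine (if short) computation is confirming that $\pi\mapsto\overrightarrow{\pi}$ and $\Pi\mapsto\mathrm{pr}_{\wedge^k A}\Pi|_M$ are mutually inverse, i.e.\ the normalization $\overrightarrow{\pi}|_M=\pi$ at units; everything else is formal.
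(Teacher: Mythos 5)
Your proposal is correct and follows exactly the route the paper intends (the paper states this corollary without proof, as an immediate application of the dictionary $V_1\rightrightarrows V_0 \leadsto (t:\ker s\to V_0)$ from the preliminaries). Your identification of $\ker s$ with $\Gamma(\wedge^k A)$ via $\pi\mapsto\overrightarrow{\pi}$, the normalization $\overrightarrow{\pi}|_M=\pi$, and the computation $t(\overrightarrow{\pi})=\overrightarrow{\pi}-\overleftarrow{\pi}$ are precisely the content of the paper's preceding example, so nothing is missing.
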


 In addition to this, since affine multivector fields are closed under the Schouten bracket \cite{Xu}, we further obtain a graded strict Lie $2$-algebra on this $2$-vector space. See \cite{Baez} for the details of Lie $2$-algebras.
 \begin{Thm}\label{Lie 2}
We have a graded strict Lie $2$-algebra structure on \[\oplus_k \mathfrak{X}^k_{\aff}(\mathcal{G})\rightrightarrows \oplus_k \mathfrak{X}^k_{\mathrm{mult}}(\mathcal{G})\]
where the bracket is the Schouten bracket.
% and the graded $2$-vector space structure is given in Theorem \ref{2-group for vf}.
\end{Thm}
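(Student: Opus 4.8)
The plan is to exhibit the structure through its chain-complex model and the equivalence, recalled in Section~2.1, between strict Lie $2$-algebras and strict $2$-term $L_\infty$-algebras. By the Corollary above, the underlying complex is $d\colon C_1\to C_0$ with $C_0=\oplus_k \mathfrak{X}^k_{\mathrm{mult}}(\mathcal{G})$, $C_1=\oplus_k\Gamma(\wedge^k A)$ and $d\pi=\overrightarrow{\pi}-\overleftarrow{\pi}$; both spaces carry the internal $\mathbb{Z}$-grading by multivector degree (shifted by one, so that the Schouten bracket has degree zero), and $d$ preserves this grading. It then suffices to define graded brackets $[\cdot,\cdot]\colon C_0\times C_0\to C_0$ and $[\cdot,\cdot]\colon C_0\times C_1\to C_1$, and to check the graded versions of skew-symmetry and the Jacobi identity, together with the two compatibility relations $d[X,\pi]=[X,d\pi]$ and $[d\pi,\pi']=[\pi,d\pi']$ for $X\in C_0$ and $\pi,\pi'\in C_1$, following the explicit graded axioms of \cite{BCLX}.

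On $C_0$ I take the bracket to be the Schouten bracket itself: the Schouten bracket of two multiplicative multivector fields is again multiplicative (classical, cf.\ \cite{Xu}), so this is well defined. On $C_0\times C_1$ I define $[X,\pi]$ to be the unique section with $\overrightarrow{[X,\pi]}=[X,\overrightarrow{\pi}]$; this is legitimate because, by Lemma~\ref{affine vf} and the remark following it, $[X,\overrightarrow{\pi}]$ is right-invariant whenever $X$ is multiplicative. Graded skew-symmetry and the graded Jacobi identity for both brackets are then inherited directly from the graded Lie algebra structure of the Schouten--Nijenhuis bracket on $\mathfrak{X}^\bullet(\mathcal{G})$, using that $\pi\mapsto\overrightarrow{\pi}$ is an injective, bracket-preserving map onto the right-invariant multivector fields.

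The essential point, and the step I expect to be the main obstacle, is the relation $d[X,\pi]=[X,d\pi]$, which forces the left- and right-translation descriptions of the mixed bracket to coincide. By the left-handed analogue of the affine condition, $[X,\overleftarrow{\pi}]$ is left-invariant for multiplicative $X$, say $[X,\overleftarrow{\pi}]=\overleftarrow{\sigma}$; hence $[X,d\pi]=[X,\overrightarrow{\pi}]-[X,\overleftarrow{\pi}]=\overrightarrow{[X,\pi]}-\overleftarrow{\sigma}$. Now $d\pi$ is multiplicative, so $[X,d\pi]$ is multiplicative by closure. On the other hand, a multivector field of the form $\overrightarrow{\alpha}-\overleftarrow{\beta}$ is affine (it is a difference of the affine fields treated in the examples above) and satisfies $\mathrm{pr}_{\wedge^k A}(\overrightarrow{\alpha}-\overleftarrow{\beta})|_M=\alpha-\beta$; by the characterization that a multivector field is multiplicative if and only if it is affine and $M$ is coisotropic \cite{Xu,Weinstein}, such a field is multiplicative precisely when $\alpha=\beta$. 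Applying this to $\overrightarrow{[X,\pi]}-\overleftarrow{\sigma}$ gives $\sigma=[X,\pi]$, whence $[X,\overleftarrow{\pi}]=\overleftarrow{[X,\pi]}$ and therefore $d[X,\pi]=\overrightarrow{[X,\pi]}-\overleftarrow{[X,\pi]}=[X,d\pi]$.

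Finally, for $[d\pi,\pi']=[\pi,d\pi']$ I compute on the right-invariant side: $\overrightarrow{[d\pi,\pi']}=[d\pi,\overrightarrow{\pi'}]=[\overrightarrow{\pi},\overrightarrow{\pi'}]-[\overleftarrow{\pi},\overrightarrow{\pi'}]$. Right- and left-invariant multivector fields Schouten-commute, so the second term vanishes, while $[\overrightarrow{\pi},\overrightarrow{\pi'}]=\overrightarrow{[\pi,\pi']_A}$ is the right translation of the Lie algebroid Schouten bracket $[\cdot,\cdot]_A$ on $\Gamma(\wedge^\bullet A)$. Thus $[d\pi,\pi']=[\pi,\pi']_A$, which is graded skew-symmetric in $\pi$ and $\pi'$; the same computation together with graded skew-symmetry gives $[\pi,d\pi']=[\pi,\pi']_A$, so the two sides agree. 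With graded skew-symmetry, the graded Jacobi identity, and both compatibility relations verified, the Schouten bracket endows $\oplus_k \mathfrak{X}^k_{\aff}(\mathcal{G})\rightrightarrows \oplus_k \mathfrak{X}^k_{\mathrm{mult}}(\mathcal{G})$ with a graded strict Lie $2$-algebra structure.
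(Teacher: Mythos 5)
Your argument is correct in its individual steps but takes a genuinely different route from the paper's. The paper works directly at the level of the $2$-vector space: it verifies that the Schouten bracket is a functor, i.e.\ the identity (\ref{functor}) $[\Pi_1*\Pi_1',\Pi_2*\Pi_2']=[\Pi_1,\Pi_2]*[\Pi_1',\Pi_2']$ for composable pairs of affine fields, the key computation being the expansion of $[\Pi_1',\Pi_2']$ into left-translated pieces and the identification of $\mathrm{pr}_{\wedge^{k+l}A}[\Pi_1',\Pi_2']|_M$. You instead pass to the associated $2$-term complex $\Gamma(\wedge^\bullet A)\to \mathfrak{X}^\bullet_{\mathrm{mult}}(\mathcal{G})$ and check the crossed-module axioms there. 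Your verification of $d[X,\pi]=[X,d\pi]$ via the observation that a multiplicative field of the form $\overrightarrow{\alpha}-\overleftarrow{\beta}$ forces $\alpha=\beta$ is clean and correct (it is exactly the paper's characterization ``multiplicative $=$ affine $+$ $M$ coisotropic'' applied to an affine field whose $\wedge^k A$-component is $\alpha-\beta$), and the computation $[d\pi,\pi']=[\pi,\pi']_A$ using $[\overleftarrow{\pi},\overrightarrow{\pi'}]=0$ is also fine. The advantage of your route is that every verification involves only multiplicative fields and sections of $\wedge^\bullet A$, never a general affine field.

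That advantage, however, is also where the one genuine omission lies. Having equipped the complex with brackets, the equivalence of Section~2.1 produces \emph{some} graded strict Lie $2$-algebra on the $2$-vector space $\mathfrak{X}^\bullet_{\mathrm{mult}}(\mathcal{G})\oplus\Gamma(\wedge^\bullet A)\rightrightarrows \mathfrak{X}^\bullet_{\mathrm{mult}}(\mathcal{G})$; the theorem asserts that its bracket functor \emph{is} the Schouten bracket on $\oplus_k\mathfrak{X}^k_{\aff}(\mathcal{G})$. You never check this identification: concretely, for $\Pi=\Pi_r+\overrightarrow{\pi}$ and $\Pi'=\Pi'_r+\overrightarrow{\pi'}$ you still need
\[
[\Pi,\Pi']=[\Pi_r,\Pi'_r]+\overrightarrow{[\Pi_r,\pi']}\pm\overrightarrow{[\Pi'_r,\pi]}+\overrightarrow{[\pi,\pi']_A},
\]
which is the right-translation analogue of the paper's identity (\ref{Schouten bracket}). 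The good news is that this follows by bilinearity from facts you have already established, namely $[\Pi_r,\overrightarrow{\pi'}]=\overrightarrow{[\Pi_r,\pi']}$ and $[\overrightarrow{\pi},\overrightarrow{\pi'}]=\overrightarrow{[\pi,\pi']_A}$, so the gap is easily filled; but as written your proof stops one step short of the statement being proved.
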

\begin{proof}
The Schouten bracket defines a graded Lie algebra structure on $\oplus_k\mathfrak{X}^k_{\aff}(\mathcal{G})$. It suffices to check that it is a functor. Let $\Pi_1,\Pi'_1\in \mathfrak{X}^k_{\aff}(\mathcal{G})$ and $\Pi_2,\Pi'_2\in \mathfrak{X}^l_{\aff}(\mathcal{G})$ be two multiplicable pairs, that is $(\Pi_1)_r=(\Pi'_1)_l$ and $(\Pi_2)_r=(\Pi'_2)_l$. The Schouten bracket $[\cdot,\cdot]:\mathfrak{X}^k_{\aff}(\mathcal{G})\times \mathfrak{X}^l(\mathcal{G})\to \mathfrak{X}^{k+l}(\mathcal{G})$ being a functor means
\begin{eqnarray}\label{functor}
[(\Pi_1,\Pi_2)*(\Pi'_1,\Pi'_2)]=[\Pi_1,\Pi_2]*[\Pi'_1,\Pi'_2].
\end{eqnarray}
Actually, by Theorem \ref{2-group for vf}, the left hand side equals to
\[[\Pi_1+\overleftarrow{\pi'_1},\Pi_2+\overleftarrow{\pi'_2}]=[\Pi_1,\Pi_2]+[\Pi_1,\overleftarrow{\pi'_2}]+[\overleftarrow{\pi'_1},\Pi_2]-\overleftarrow{[\pi'_1,\pi'_2]},\]
where $\pi'_1=\mathrm{pr}_{\wedge^k A}{\Pi'_1}|_M$ and $\pi'_2=\mathrm{pr}_{\wedge^l A} \Pi'_2|_M$. And the right hand side amounts to
\[[\Pi_1,\Pi_2]+\overleftarrow{\mathrm{pr}_{\wedge^{k+l} A} [\Pi'_1,\Pi'_2]|_M}.\]
By strict calculation, we have \begin{eqnarray*}
 [\Pi'_1,\Pi'_2]&=&[(\Pi'_1)_l,(\Pi'_2)_l]+[\overleftarrow{\pi'_1},(\Pi'_2)_l]+[(\Pi'_1)_l,\overleftarrow{\pi'_2}]+[\overleftarrow{\pi'_1},\overleftarrow{\pi'_2}]\\ &=&[(\Pi'_1)_l,(\Pi'_2)_l]+[\overleftarrow{\pi'_1},\Pi_2]+[\Pi_1,\overleftarrow{\pi'_2}]+[\overleftarrow{\pi'_1},\overleftarrow{\pi'_2}],\end{eqnarray*} where we have used $(\Pi_1)_r=(\Pi'_1)_l$ and $(\Pi_2)_r=(\Pi'_2)_l$ and the fact that $[\overleftarrow{X},\overrightarrow{Y}]=0$ for any $X,Y\in \Gamma(A)$. Moreover,
$\Pi_1$ is multiplicative so that $[\Pi_1, \overleftarrow{\pi'_2}]$ is left-invariant and so does $[\overleftarrow{\pi'_1},\Pi_2]$. From this and $(\Pi'_1)_l$  ($(\Pi'_2)_l$) has no component in $\wedge^k A$ ($\wedge^l A$), we see
\[[\Pi_1,\Pi_2]+\overleftarrow{\mathrm{pr}_{\wedge^{k+l} A} [\Pi'_1,\Pi'_2]|_M}=[\Pi_1,\Pi_2]+[\overleftarrow{\pi'_1},\Pi_2]+[\Pi_1,\overleftarrow{\pi'_2}]-\overleftarrow{[\pi'_1,\pi'_2]}.\]
Thus the left hand side of (\ref{functor}) equals to its  right hand side. This finishes the proof.
\end{proof}

\begin{Rm}
In \cite{BL}, the authors constructed a strict Lie $2$-algebra on the multiplicative $1$-vector fields and their natural transformations. They proved the Morita equivalence of this construction and obtained a strict Lie $2$-algebra structure on the geometric stack. Actually, this is our case for $k=1$ when writing the strict Lie $2$-algebra  as a Lie algebra crossed module.
Another remark is that our graded Lie $2$-algebra is actually the same with the one in \cite{BCLX}, where they wrote it in the $2$-term $L_\infty$-algebra form $\Gamma(\wedge^\bullet A)\to \mathcal{X}^\bullet_{\mathrm{mult}} (\mathcal{G})$. Here we see affine multivector fields as the geometric support of this graded Lie $2$-algebra structure.
\end{Rm}

 Now we move to consider the infinitesimal of affine $k$-vector fields.

For an affine $k$-vector field $\Pi\in \mathfrak{X}^k_{\aff}(\mathcal{G})$, by Lemma \ref{affine vf},   define $\delta_\Pi f\in \Gamma(\wedge^{k-1} A)$ and $\delta_\Pi X\in \Gamma(\wedge^k A)$ for any $f\in C^\infty(M)$ and $X\in \Gamma(A)$, such that
\begin{eqnarray}\label{inf}
 \overrightarrow{\delta_\Pi f}=[\Pi,t^*f],\qquad \overrightarrow{\delta_\Pi X}=[\Pi,\overrightarrow{X}].
 \end{eqnarray}

Recall that
a {\bf $k$-differential} \cite{Xu} on a Lie algebroid $A$ is a pair of maps
\[\delta_0:C^\infty(M)\to \Gamma(\wedge^{k-1} A),\qquad \delta_1: \Gamma(A)\to \Gamma(\wedge^k A),\]
satisfying
\[ \delta_0(fg)=\delta_0(f) g+f\delta_0(g),\qquad \delta_1(fX)=\delta_0(f) X+f\delta_1(X),\qquad \forall f,g\in C^\infty(M), X\in \Gamma(A),\]
and
\[\delta_1[X,Y]=[\delta_1(X),Y]+[X,\delta_1(Y)],\qquad X, Y\in \Gamma(A).\]

Denote by $\oplus_k\mathfrak{X}_{\aff}^k(\mathcal{G})$ ($\oplus_k\mathfrak{X}_{\mathrm{mult}}^k(\mathcal{G})$) and $\oplus_k\mathcal{A}_k$ the spaces of affine (multiplicative) vector fields on $\mathcal{G}$ and $k$-differentials on $A$. It is direct to check that they are graded Lie algebras with the Schouten bracket and the commutative Lie bracket.

With this notions, by (\ref{inf}), we have a map
\begin{eqnarray}\label{delta}
\delta:\oplus_k \mathfrak{X}_{\aff}^k(\mathcal{G})\to \oplus_k \mathcal{A}_k,\qquad \Pi\mapsto \delta_{\Pi}.
\end{eqnarray}
The universal lifting theorem says that \[\delta|_{\oplus_k \mathfrak{X}_{\mathrm{multi}}^k(\mathcal{G})}:\oplus_k \mathfrak{X}_{\mathrm{multi}}^k(\mathcal{G})\to   \oplus_k \mathcal{A}_k\]
is an isomorphism of graded Lie algebras when $\mathcal{G}$ is $s$-connected and $s$-simply connected \cite{Xu}.

As a direct consequence of  Proposition \ref{leftright}, we have the following isomorphism of graded Lie algebras.
\begin{Pro}\label{multiaffineiso}
We have an isomorphism
\[\oplus_k \mathfrak{X}^k_{\aff}(\mathcal{G})\to \oplus_k \mathfrak{X}^k_{\mathrm{mult}}(\mathcal{G})\triangleright (\oplus_k \Gamma(\wedge^k A)),\qquad \Pi\mapsto (\Pi-\overrightarrow{\pi},\pi),\qquad \pi=\mathrm{pr}_{\wedge^k A} \Pi|_M,\]
of graded Lie algebras, where the brackets on $\oplus_k \mathfrak{X}^k_{\aff}(\mathcal{G})$ and $\oplus_k \mathfrak{X}^k_{\mathrm{mult}}(\mathcal{G})$ are the Schouten bracket, the bracket on $\oplus_k \Gamma(\wedge^k A)$ is the graded Lie bracket induced by the Lie bracket on $A$, and the mixed bracket is
\[[\Gamma, \pi]=\delta_\Gamma(\pi)\in \Gamma(\wedge^{k+l-1} A),\qquad \Gamma\in \mathfrak{X}^k_{\mathrm{mult}}(\mathcal{G}), \pi\in \Gamma(\wedge^l A).\]
\end{Pro}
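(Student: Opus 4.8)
The plan is to produce an explicit inverse and then check bracket-compatibility degree by degree; the graded Jacobi identity on the target will then follow automatically by transport of structure, so I will not verify it separately. Write $\Phi(\Pi)=(\Pi-\overrightarrow{\pi},\pi)$ with $\pi=\mathrm{pr}_{\wedge^k A}\Pi|_M$. By Proposition \ref{leftright}, $\Pi-\overrightarrow{\pi}=\Pi_r$ is multiplicative whenever $\Pi$ is affine, so $\Phi$ is a well-defined linear map. Its inverse should be $(\Gamma,\pi)\mapsto \Gamma+\overrightarrow{\pi}$: a multiplicative field is affine and $\overrightarrow{\pi}$ is affine (the Example above, with $\Pi_r=0$), so the sum is affine; moreover $\mathrm{pr}_{\wedge^k A}\Gamma|_M=0$ by multiplicativity while $\overrightarrow{\pi}|_M=\pi$, hence $\mathrm{pr}_{\wedge^k A}(\Gamma+\overrightarrow{\pi})|_M=\pi$ and $(\Gamma+\overrightarrow{\pi})-\overrightarrow{\pi}=\Gamma$. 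This shows the two maps are mutually inverse, so $\Phi$ is a graded linear isomorphism.

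For bracket-compatibility, I would write $\Pi=\Gamma+\overrightarrow{\pi}$ and $\Pi'=\Gamma'+\overrightarrow{\pi'}$ with $\Gamma=\Pi_r,\Gamma'=\Pi'_r$ multiplicative, and expand the Schouten bracket bilinearly:
\[ [\Pi,\Pi']=[\Gamma,\Gamma']+[\Gamma,\overrightarrow{\pi'}]+[\overrightarrow{\pi},\Gamma']+[\overrightarrow{\pi},\overrightarrow{\pi'}]. \]
The first term $[\Gamma,\Gamma']$ is again multiplicative, since multiplicative multivector fields are closed under the Schouten bracket \cite{Xu}. The last term satisfies $[\overrightarrow{\pi},\overrightarrow{\pi'}]=\overrightarrow{[\pi,\pi']}$, because right translation is a morphism from the graded Lie algebra $(\Gamma(\wedge^\bullet A),[\cdot,\cdot]_A)$ to $(\mathfrak{X}^\bullet(\mathcal{G}),\mathrm{Schouten})$ and hence intertwines the induced brackets.

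The two mixed terms are the decisive ones. Extending the defining relation \eqref{inf} $\overrightarrow{\delta_\Gamma X}=[\Gamma,\overrightarrow{X}]$ from $\Gamma(A)$ to $\Gamma(\wedge^\bullet A)$ through the derivation property of both the Schouten bracket and the $k$-differential $\delta_\Gamma$ \cite{Xu}, I expect $[\Gamma,\overrightarrow{\pi'}]=\overrightarrow{\delta_\Gamma(\pi')}$, multiplicativity of $\Gamma$ being exactly what makes this bracket right-invariant; graded skew-symmetry then gives $[\overrightarrow{\pi},\Gamma']=-(-1)^{(k-1)(l-1)}\overrightarrow{\delta_{\Gamma'}(\pi)}$. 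Collecting terms,
\[ [\Pi,\Pi']=[\Gamma,\Gamma']+\overrightarrow{\,[\pi,\pi']+\delta_\Gamma(\pi')-(-1)^{(k-1)(l-1)}\delta_{\Gamma'}(\pi)\,}. \]
Since $[\Gamma,\Gamma']$ has no $\wedge^{k+l-1}A$-component along $M$, applying $\Phi$ reads off the multiplicative part $[\Gamma,\Gamma']$ and the $\wedge^{k+l-1}A$-part $[\pi,\pi']+\delta_\Gamma(\pi')-(-1)^{(k-1)(l-1)}\delta_{\Gamma'}(\pi)$, which is precisely the semidirect-product bracket
\[ [(\Gamma,\pi),(\Gamma',\pi')]=\bigl([\Gamma,\Gamma'],\,[\pi,\pi']+\delta_\Gamma(\pi')-(-1)^{(k-1)(l-1)}\delta_{\Gamma'}(\pi)\bigr) \]
determined by the mixed bracket $[\Gamma,\pi]=\delta_\Gamma(\pi)$.

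The main obstacle I anticipate is the identity $[\Gamma,\overrightarrow{\pi'}]=\overrightarrow{\delta_\Gamma(\pi')}$ at the multivector level: relation \eqref{inf} only defines $\delta_\Gamma$ on functions and single sections, so I must confirm that both sides are the same derivation in $\pi'$ with respect to $\wedge$, agreeing on generators, while tracking the degree shift (namely that $\delta_\Gamma$ raises the exterior degree by $k-1$) and the Koszul signs, which is exactly where the factor $(-1)^{(k-1)(l-1)}$ arises. Once this multivector extension and the skew-symmetry signs are pinned down, the bracket-compatibility together with the linear isomorphism of the first step yields the isomorphism of graded Lie algebras, and the Jacobi identity on the target comes for free by transport of structure.
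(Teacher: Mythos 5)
Your proposal is correct and follows essentially the same route as the paper: the paper likewise inverts the map by $(\Gamma,\pi)\mapsto \Gamma+\overrightarrow{\pi}$, notes that the target bracket is the one transported from the Schouten bracket under the identification $\pi\leftrightarrow\overrightarrow{\pi}$, and verifies compatibility via the key projection identity \eqref{Schouten bracket}, whose proof (like your central identity $[\Gamma,\overrightarrow{\pi'}]=\overrightarrow{\delta_\Gamma(\pi')}$) is deferred to the computation in Theorem \ref{Lie 2} with right translations in place of left ones. The only point to reconcile is the Koszul sign in the mixed term, where your $-(-1)^{(k-1)(l-1)}\delta_{\Gamma'}(\pi)$ and the paper's $(-1)^{kl}\delta_{\Pi'}(\pi)$ in \eqref{Schouten bracket} should be checked against each other under a fixed convention for the graded skew-symmetry of the Schouten bracket (they agree for $k=l=2$).
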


\begin{proof}
By Proposition \ref{leftright}, this map is an isomorphism of graded vector spaces whose inverse is $(\Gamma,\pi)\mapsto \Gamma+\overrightarrow{\pi}$. Identifying an element $\pi\in \Gamma(\wedge^k A)$ with the affine $k$-vector field $\overrightarrow{\pi}$, we see that the Lie bracket on the right hand side is actually induced by the Schouten bracket on the left hand side under the isomorphism. Hence the right hand side is a graded Lie algebra and this map is an isomorphism of graded Lie algebras. We could also check directly this map is a morphism of Lie algebras. The key is to notice that
\begin{eqnarray}\label{Schouten bracket}
\mathrm{pr}_{\wedge^{k+l-1} A} [\Pi,\Pi']=\delta_\Pi (\pi')+(-1)^{kl} \delta_{\Pi'} (\pi)-[\pi,\pi'],\qquad \Pi\in \mathfrak{X}^k_\aff(\mathcal{G}),\Pi'\in \mathfrak{X}^l_\aff (\mathcal{G}).
\end{eqnarray}
The proof of this is similar to that in the proof of Theorem \ref{Lie 2} for the right translation instead of the left translation.
\end{proof}

The map $\delta$ defined in (\ref{delta}) is not a bijection on $\oplus_k \mathfrak{X}_{\aff}^k(\mathcal{G})$. In fact, for $\Pi\in \mathfrak{X}^k_{\aff}(\mathcal{G})$, we have
\begin{eqnarray}\label{inf bia}
\delta_{\Pi-\overrightarrow{\pi}}=\delta_\Pi-[\pi,\cdot],\qquad \delta_{\Pi-\overleftarrow{\pi}}=\delta_\Pi,\qquad \pi=\mathrm{pr}_{\wedge^k A} \Pi|_M.
\end{eqnarray}

Proposition \ref{multiaffineiso} together with the universal lifting theorem for multiplicative multivector fields tells us that  the kernel of the map $\delta$ is $\oplus_k \Gamma(\wedge^k A)$ and  we obtain the universal lifting theorem for affine multivector fields.

\begin{Thm}
Let $\mathcal{G}$ be  an $s$-simply connected and $s$-connected Lie groupoid with Lie algebroid $A$. We have a graded Lie algebra isomorphism
\[\oplus_k \mathfrak{X}_{\aff}^k(\mathcal{G})\cong \oplus_k \mathcal{A}_k\triangleright (\oplus_k\Gamma(\wedge^k A)),\qquad
\Pi\mapsto(\delta_{\Pi}-[\pi,\cdot], \pi),\qquad \pi=\mathrm{pr}_{\wedge^k A}\Pi|_M,\]
where the bracket on $\oplus_k \mathfrak{X}_{\aff}^k(\mathcal{G})$ and $\oplus_k \mathcal{A}_k$ are the Schouten bracket and the commutator bracket, the bracket on $\oplus_k \Gamma(\wedge^k A)$ is the graded Lie bracket induced by the Lie bracket on $A$ and the mixed bracket is
\[[\delta, \pi]=\delta(\pi)\in \Gamma(\wedge^{k+l-1} A),\qquad \delta\in \mathcal{A}_k,\pi\in \Gamma(\wedge^l A).\]
Here $\delta$ acts on $\pi$ as a degree $k-1$ derivation.
\end{Thm}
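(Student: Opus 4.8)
The plan is to obtain the desired isomorphism as a composite of two isomorphisms already in hand, rather than building it from scratch. By Proposition \ref{multiaffineiso} we have a graded Lie algebra isomorphism
\[\oplus_k \mathfrak{X}^k_{\aff}(\mathcal{G}) \xrightarrow{\cong} \oplus_k \mathfrak{X}^k_{\mathrm{mult}}(\mathcal{G}) \triangleright (\oplus_k \Gamma(\wedge^k A)), \qquad \Pi \mapsto (\Pi_r, \pi),\]
where $\Pi_r = \Pi - \overrightarrow{\pi}$ is multiplicative and $\pi = \mathrm{pr}_{\wedge^k A}\Pi|_M$. First I would invoke the universal lifting theorem recalled after \eqref{delta}, which under the $s$-connected and $s$-simply connected hypothesis gives that $\delta|_{\oplus_k \mathfrak{X}^k_{\mathrm{mult}}(\mathcal{G})}:\oplus_k \mathfrak{X}^k_{\mathrm{mult}}(\mathcal{G}) \to \oplus_k \mathcal{A}_k$, $\Gamma \mapsto \delta_\Gamma$, is an isomorphism of graded Lie algebras. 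Applying this isomorphism to the multiplicative factor and the identity to the $\oplus_k \Gamma(\wedge^k A)$ factor should produce the required map, once I check that the resulting pair is a morphism of crossed products.

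The heart of the argument is the compatibility of the two crossed-product structures. Both are semidirect products over the same graded Lie algebra $\oplus_k \Gamma(\wedge^k A)$ carrying its induced bracket, so the bracket on the second factor is untouched and I only need the action to be intertwined by $\delta|_{\mathrm{mult}}$. In $\oplus_k \mathfrak{X}^k_{\mathrm{mult}}(\mathcal{G}) \triangleright (\oplus_k \Gamma(\wedge^k A))$ the mixed bracket is $[\Gamma, \pi] = \delta_\Gamma(\pi)$, while in $\oplus_k \mathcal{A}_k \triangleright (\oplus_k \Gamma(\wedge^k A))$ it is $[\delta, \pi] = \delta(\pi)$; under $\Gamma \mapsto \delta_\Gamma$ these coincide tautologically, so the pair $(\delta|_{\mathrm{mult}}, \mathrm{id})$ preserves the semidirect bracket. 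Combined with the fact that $\delta|_{\mathrm{mult}}$ is itself a graded Lie algebra isomorphism, this shows the composite is an isomorphism of graded Lie algebras, and its bijectivity is automatic as a composite of bijections.

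Finally I would identify the composite explicitly. Tracking $\Pi$ through the two maps gives $\Pi \mapsto (\Pi_r, \pi) \mapsto (\delta_{\Pi_r}, \pi)$, and by the first identity in \eqref{inf bia} we have $\delta_{\Pi_r} = \delta_{\Pi - \overrightarrow{\pi}} = \delta_\Pi - [\pi, \cdot]$, which recovers the stated formula $\Pi \mapsto (\delta_\Pi - [\pi, \cdot], \pi)$. I do not expect a genuine obstacle, since both constituent isomorphisms and the relation \eqref{inf bia} are already established; the only point demanding care is the verification that the universal lifting isomorphism on the multiplicative part upgrades from an isomorphism of graded Lie algebras to one of crossed products, i.e. that it intertwines the two actions on $\oplus_k \Gamma(\wedge^k A)$. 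Alternatively, one could bypass the composite and verify the bracket relations on $\oplus_k \mathfrak{X}^k_{\aff}(\mathcal{G})$ directly, the essential input being the component formula \eqref{Schouten bracket} for $\mathrm{pr}_{\wedge^{k+l-1}A}[\Pi,\Pi']$, but the compositional route is cleaner and reuses the work already done.
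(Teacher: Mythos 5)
Your proposal is correct and follows essentially the same route as the paper, whose proof consists precisely of citing \eqref{inf bia}, Proposition \ref{multiaffineiso} and the universal lifting theorem; you have simply spelled out the composition and the compatibility of the semidirect-product brackets in more detail.
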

\begin{proof}
It follows from (\ref{inf bia}), Proposition \ref{multiaffineiso} and the universal lifting theorem for multiplicative multivector fields \cite{Xu}.\end{proof}
Next, we consider the case when an affine bivector field $\Pi$ on a Lie groupoid $\mathcal{G}$ is also Poisson. We shall generalize Lu's results for Lie groups in \cite{Lu}.

\begin{Pro}\label{Pi_r Poisson}
Let $\Pi$ be an affine  bivector field on a Lie groupoid $\mathcal{G}$, and $\Pi_r,\Pi_l$ be the multiplicative bivector fields given by \eqref{affmul}. Then
\begin{itemize}
\item[\rm(1)] $\Pi_r$ ($\Pi_l$) is Poisson if and only if $[\Pi,\Pi]$ is right (left)-invariant;
\item[\rm(2)]  if $\Pi_r$ is Poisson, then $\Pi$ is Poisson if and only if $2\delta_{\Pi_r} \pi+[\pi,\pi]=0$, where $\pi=\mathrm{pr}_{\wedge^2 A} \Pi|_M$.
\end{itemize}
\end{Pro}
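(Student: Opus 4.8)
The plan is to reduce both parts to a single decomposition of the self-bracket $[\Pi,\Pi]$ into a multiplicative piece and a right-invariant piece. First I would write $\Pi=\Pi_r+\overrightarrow{\pi}$ with $\pi=\mathrm{pr}_{\wedge^2 A}\Pi|_M$ and expand by bilinearity and graded symmetry of the Schouten bracket:
\[
[\Pi,\Pi]=[\Pi_r,\Pi_r]+2[\Pi_r,\overrightarrow{\pi}]+[\overrightarrow{\pi},\overrightarrow{\pi}].
\]
Two facts about right translation then apply: right translation is a morphism for the Schouten bracket, so $[\overrightarrow{\pi},\overrightarrow{\pi}]=\overrightarrow{[\pi,\pi]}$; and since $\Pi_r$ is multiplicative, the defining relation (\ref{inf}) extended to $\Gamma(\wedge^\bullet A)$ as a derivation gives $[\Pi_r,\overrightarrow{\pi}]=\overrightarrow{\delta_{\Pi_r}\pi}$, which is right-invariant. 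This yields the key identity
\[
[\Pi,\Pi]=[\Pi_r,\Pi_r]+\overrightarrow{2\delta_{\Pi_r}\pi+[\pi,\pi]},
\]
whose second summand is manifestly right-invariant.

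For part (1) I would use that $[\Pi_r,\Pi_r]$ is again multiplicative (the Schouten bracket of multiplicative fields is multiplicative), so its $\wedge^3 A$-component along $M$ vanishes. A right-invariant $3$-vector field $\overrightarrow{\phi}$ satisfies $\mathrm{pr}_{\wedge^3 A}\overrightarrow{\phi}|_M=\phi$; hence a field that is simultaneously multiplicative and right-invariant must have $\phi=0$, and so be zero. Since $[\Pi,\Pi]$ differs from $[\Pi_r,\Pi_r]$ by the right-invariant term above, $[\Pi,\Pi]$ is right-invariant iff $[\Pi_r,\Pi_r]$ is, iff $[\Pi_r,\Pi_r]=0$, i.e. iff $\Pi_r$ is Poisson. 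The statement for $\Pi_l$ is entirely symmetric: writing $\Pi=\Pi_l+\overleftarrow{\pi}$ and using $[\overleftarrow{\pi},\overleftarrow{\pi}]=-\overleftarrow{[\pi,\pi]}$ together with the left-invariance of $[\Pi_l,\overleftarrow{\pi}]$ (the left-translation characterization of multiplicativity), one gets $[\Pi,\Pi]=[\Pi_l,\Pi_l]+(\text{left-invariant})$ and concludes in the same way.

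For part (2) the decomposition does all the work: assuming $\Pi_r$ is Poisson we have $[\Pi_r,\Pi_r]=0$, so $[\Pi,\Pi]=\overrightarrow{2\delta_{\Pi_r}\pi+[\pi,\pi]}$. Because right translation is injective, $[\Pi,\Pi]=0$ iff $2\delta_{\Pi_r}\pi+[\pi,\pi]=0$, which is the claim. I would also record that this is consistent with the general formula (\ref{Schouten bracket}): setting $\Pi'=\Pi$, $k=l=2$ gives $\mathrm{pr}_{\wedge^3 A}[\Pi,\Pi]=2\delta_\Pi\pi-[\pi,\pi]$, and substituting $\delta_\Pi=\delta_{\Pi_r}+[\pi,\cdot]$ from (\ref{inf bia}) returns exactly $2\delta_{\Pi_r}\pi+[\pi,\pi]$.

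The main obstacle is justifying the two translation identities cleanly—in particular that $[\Pi_r,\overrightarrow{\pi}]=\overrightarrow{\delta_{\Pi_r}\pi}$ for a bivector $\pi$ rather than a single section of $A$, which requires that $\delta_{\Pi_r}$ extends to a derivation of $\Gamma(\wedge^\bullet A)$ intertwined with right translation, together with the analogous left-invariance statement—and asserting that a multiplicative $3$-vector field which is also invariant must vanish. Both are standard consequences of the multiplicativity theory recalled above, so once they are in place, both parts follow immediately from the one decomposition.
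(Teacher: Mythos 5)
Your proposal is correct and follows essentially the same route as the paper: both rest on expanding the Schouten bracket through the decomposition $\Pi=\Pi_r+\overrightarrow{\pi}$, using that $[\Pi_r,\overrightarrow{\pi}]=\overrightarrow{\delta_{\Pi_r}\pi}$ and that right translation is a bracket morphism, with the mirror-image computation for $\Pi_l$. The only cosmetic difference is in the converse direction of (1), where you invoke ``multiplicative and right-invariant implies zero'' while the paper reads off $\mathrm{pr}_{\wedge^3 A}[\Pi,\Pi]|_M=2\delta_\Pi\pi-[\pi,\pi]$ from \eqref{Schouten bracket}; these amount to the same observation.
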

\begin{proof}
For (1), direct calculation shows that
\[[\Pi_r,\Pi_r]=[\Pi-\overrightarrow{\pi},\Pi-\overrightarrow{\pi}]=[\Pi,\Pi]-2[\Pi,\overrightarrow{\pi}]+[\overrightarrow{\pi},\overrightarrow{\pi}]=[\Pi,\Pi]-2\overrightarrow{\delta_{\Pi} \pi} +\overrightarrow{[\pi,\pi]}. \]
So if $\Pi_r$ is Poisson, $[\Pi,\Pi]$ is right-invariant. Conversely, since by (\ref{Schouten bracket}),  we have $\mathrm{pr}_{\wedge^3 A}[\Pi,\Pi]|_M=2\delta_\Pi \pi-[\pi,\pi]$.  If $[\Pi,\Pi]$ is right-invariant, we must have $[\Pi,\Pi]=\overrightarrow{2\delta_\Pi \pi-[\pi,\pi]}$ and hence $[\Pi_r,\Pi_r]=0$.

For (2),  following from
\[[\Pi,\Pi]=[\Pi_r+\overrightarrow{\pi},\Pi_r+\overrightarrow{\pi}]=[\Pi_r,\Pi_r]+2\overrightarrow{\delta_{\Pi_r} \pi}+\overrightarrow{[\pi,\pi]},\]
we get the result.
\end{proof}
As a corollary, if an affine bi-vector field $\Pi$  is Poisson, the associated two multiplicative vector fields $\Pi_r$ and $\Pi_l$ are  also Poisson.
\begin{Cor}
Let $\Pi$ be an affine Poisson structure on a Lie groupoid $\mathcal{G}$ with $\pi=\mathrm{pr}_{\wedge^2 A} \Pi|_M$. Then its inverse as introduced in \eqref{Pi inverse}
\[\Pi^{-1}=\Pi-(\overrightarrow{\pi}+\overleftarrow{\pi})\]
 is also an affine Poisson structure on $\mathcal{G}$.
 \end{Cor}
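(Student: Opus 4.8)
The plan is to show that $\Pi^{-1}$ is both affine and Poisson. Affineness is immediate: by Theorem \ref{2-group for vf}, the element $\Pi^{-1}$ of \eqref{Pi inverse} is the groupoid inverse of $\Pi$ inside the $2$-vector space $\mathfrak{X}^2_{\aff}(\mathcal{G})\rightrightarrows\mathfrak{X}^2_{\mathrm{mult}}(\mathcal{G})$, hence lies in $\mathfrak{X}^2_{\aff}(\mathcal{G})$. All the work therefore goes into verifying $[\Pi^{-1},\Pi^{-1}]=0$. I would begin from the rewriting
\[ \Pi^{-1}=\Pi-\overrightarrow{\pi}-\overleftarrow{\pi}=\Pi_l-\overrightarrow{\pi}, \]
which displays $\Pi^{-1}$ as the multiplicative bivector field $\Pi_l$ corrected by a right translation, exactly parallel to the decomposition $\Pi=\Pi_r+\overrightarrow{\pi}$ treated in Proposition \ref{Pi_r Poisson}. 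Since $\Pi$ is Poisson, $[\Pi,\Pi]=0$ is in particular left-invariant, so Proposition \ref{Pi_r Poisson}(1) gives $[\Pi_l,\Pi_l]=0$.

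Next I would expand the Schouten bracket, using its symmetry on bivector fields:
\[ [\Pi^{-1},\Pi^{-1}]=[\Pi_l,\Pi_l]-2[\Pi_l,\overrightarrow{\pi}]+[\overrightarrow{\pi},\overrightarrow{\pi}]. \]
The first term vanishes. For the last term I would use, as in the proof of Proposition \ref{Pi_r Poisson}, that right translation intertwines the Schouten bracket on $\Gamma(\wedge^\bullet A)$ with that on multivector fields, giving $[\overrightarrow{\pi},\overrightarrow{\pi}]=\overrightarrow{[\pi,\pi]}$. For the middle term I would extend the defining relation \eqref{inf} from $\Gamma(A)$ to $\Gamma(\wedge^2 A)$, so that $[\Pi_l,\overrightarrow{\pi}]=\overrightarrow{\delta_{\Pi_l}\pi}$ (valid since $\Pi_l$, being multiplicative, is affine), and then invoke the identity $\delta_{\Pi_l}=\delta_\Pi$ from \eqref{inf bia}. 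Collecting terms yields
\[ [\Pi^{-1},\Pi^{-1}]=\overrightarrow{[\pi,\pi]-2\delta_\Pi\pi}. \]

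To finish I would use the Poisson hypothesis on $\Pi$ one more time. By \eqref{Schouten bracket} with $\Pi'=\Pi$ and $k=l=2$, the $\wedge^3 A$-component satisfies $\mathrm{pr}_{\wedge^3 A}[\Pi,\Pi]=2\delta_\Pi\pi-[\pi,\pi]$; since $[\Pi,\Pi]=0$ this forces $2\delta_\Pi\pi-[\pi,\pi]=0$. Hence the right translation above is of the zero section, so $[\Pi^{-1},\Pi^{-1}]=0$ and $\Pi^{-1}$ is Poisson. I expect the only points requiring care to be the bookkeeping for the extension of $\delta_\Pi$ to $\wedge^2 A$ together with the left/right-translation sign conventions, and the correct use of $\delta_{\Pi_l}=\delta_\Pi$; once these are settled the computation collapses to the single scalar identity $2\delta_\Pi\pi=[\pi,\pi]$. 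An alternative, essentially equivalent route is to compute $\pi^{-1}=\mathrm{pr}_{\wedge^2 A}\Pi^{-1}|_M=-\pi$ and $(\Pi^{-1})_r=\Pi_l$, and then apply Proposition \ref{Pi_r Poisson}(2) directly to $\Pi^{-1}$, verifying that its Poisson criterion $2\delta_{(\Pi^{-1})_r}\pi^{-1}+[\pi^{-1},\pi^{-1}]=0$ reduces, via \eqref{inf bia}, to the same identity.
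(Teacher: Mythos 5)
Your argument is correct, and it succeeds by essentially the same mechanism as the paper's, though with different bookkeeping. The paper's proof is a three-line computation: expand $[\Pi^{-1},\Pi^{-1}]$ using $[\Pi,\Pi]=0$ and $[\overrightarrow{\pi},\overleftarrow{\pi}]=0$, recognize the result as $[\Pi_r,\Pi_r]+[\Pi_l,\Pi_l]$, and invoke Proposition \ref{Pi_r Poisson}(1) twice (since $[\Pi,\Pi]=0$ is both left- and right-invariant); no infinitesimal data appears. You instead write $\Pi^{-1}=\Pi_l-\overrightarrow{\pi}$, kill $[\Pi_l,\Pi_l]$ by Proposition \ref{Pi_r Poisson}(1), and reduce the remainder $-2[\Pi_l,\overrightarrow{\pi}]+[\overrightarrow{\pi},\overrightarrow{\pi}]$ to the scalar identity $2\delta_\Pi\pi=[\pi,\pi]$ via \eqref{inf bia} and \eqref{Schouten bracket}. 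Since $[\overleftarrow{\pi},\overrightarrow{\pi}]=0$ and $[\Pi,\Pi]=0$, that remainder is literally $[\Pi_r,\Pi_r]$, so your final step re-derives inline the converse half of Proposition \ref{Pi_r Poisson}(1); the paper's version is shorter only because it cites that proposition symmetrically for both translations instead of unpacking one of them. Your alternative route --- computing $\pi^{-1}=-\pi$ and $(\Pi^{-1})_r=\Pi_l$ and then applying Proposition \ref{Pi_r Poisson}(2) to $\Pi^{-1}$ --- is also valid and is arguably the cleanest conceptual packaging, though it is not the one the paper uses. The only point needing care in your write-up, which you correctly flag, is the extension of $\delta_\Pi$ from $\Gamma(A)$ to $\Gamma(\wedge^2 A)$ so that $[\Pi_l,\overrightarrow{\pi}]=\overrightarrow{\delta_{\Pi_l}\pi}$; the paper already uses exactly this extension in the proof of Proposition \ref{Pi_r Poisson}, so it is available to you.
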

\begin{proof}
$\Pi^{-1}$ is obviously affine. To see that  it is Poisson, we have
\begin{eqnarray*}
[\Pi^{-1},\Pi^{-1}]&=&[\Pi-(\overrightarrow{\pi}+\overleftarrow{\pi}),\Pi-(\overrightarrow{\pi}+\overleftarrow{\pi})]\\ &=&-2[\Pi,\overrightarrow{\pi}]-2[\Pi,\overleftarrow{\pi}]+[\overleftarrow{\pi},\overleftarrow{\pi}]+[\overrightarrow{\pi},\overrightarrow{\pi}]\\ &=&[\Pi_r,\Pi_r]+[\Pi_l,\Pi_l].
\end{eqnarray*}
Therefore, by Proposition \ref{Pi_r Poisson},  $\Pi^{-1}$ is Poisson.
\end{proof}

\begin{Ex}
Let $\mathcal{G}$ be a Lie groupoid with Lie algebroid $A$. For $\pi\in \Gamma(\wedge^2 A)$, the bivector field $\Pi=\overrightarrow{\pi}$ is affine. It is Poisson if and only if $\pi$ satisfies the classical Yang-Baxter equation $[\pi,\pi]=0$. Moreover, we have $\Pi_r=0, \Pi_l=\overrightarrow{\pi}-\overleftarrow{\pi}$ and $\Pi^{-1}=-\overleftarrow{\pi}$.

Besides, given any $\gamma\in \Gamma(\wedge^2 A)$, define $\Pi=\overrightarrow{\pi}+\overleftarrow{\gamma}$. Then we get $\Pi_r=\overleftarrow{\gamma}-\overrightarrow{\gamma}$ and $\Pi_l=\overrightarrow{\pi}-\overleftarrow{\pi}$ and $\Pi^{-1}=-\overleftarrow{\pi}-\overrightarrow{\gamma}$.
Furthermore, direct calculation shows that $\Pi$ is Poisson if and only if $\overrightarrow{[\pi,\pi]}=\overleftarrow{[\gamma,\gamma]}$, which implies that $[\pi,\pi]=[\gamma,\gamma]\in \mathrm{ker} \rho$ and both of them are $\Ad$-invariant.
\end{Ex}

Affine Poisson structures give rise to a natural equivalence relation between multiplicative Poisson structures on a Lie groupoid (Poisson groupoids), which further gives an equivalence relation on Lie bialgebroids.

\section{Affine $k$-forms on a Lie groupoid}

A $k$-form $\Theta\in \Omega^k(\mathcal{G})$  on a Lie groupoid $\mathcal{G}$ is multiplicative if the graph of multiplication $\{(g,h,gh);s(g)=t(h)\}$, or space of triangles, is an isotropic submanifold of $\mathcal{G}\times \mathcal{G}\times \mathcal{G}$ with respect to $\Theta\oplus \Theta\oplus -\Theta$. Algebraically, a $k$-form $\Theta$ on $\mathcal{G}\rightrightarrows M$ is multiplicative \cite{BC2,BC,C} if it satisfies that
\[m^*\Theta=\mathrm{pr}_1^*\Theta+\mathrm{pr}_2^*\Theta,\]
where $m,\mathrm{pr}_1,\mathrm{pr}_2:\mathcal{G}^{(2)}\to \mathcal{G}$ are the groupoid multiplication and the projections to the first and second components respectively.

One consequence of the multiplicativity condition is that $\Theta$ is isotropic on $M$, that is $\Theta(X_1,\cdots,X_k)=0$ for any $X_1,\cdots,X_k\in \mathfrak{X}^1(M)$. Namely, $\iota^*\Theta=0$, where $\iota:M\hookrightarrow \mathcal{G}$ is the natural inclusion. In other words, the restriction of $\Theta$ on $M$ has no component in $\wedge^k T^*M$.  Relaxing this condition, we shall get the notion of affine $k$-forms.

The restriction of a $k$-form  $\Theta\in \Omega^k(\mathcal{G})$ on $M$ has $k+1$-components: \[\Theta|_M\in \Gamma(\wedge^k T^*\mathcal{G}|_M)=\Gamma(\wedge^k(A^*\oplus T^*M))=\Gamma(\wedge^k A^*\oplus \wedge^{k-1} A^*\wedge T^*M\wedge\cdots \wedge \wedge^k T^*M).\] Denote by $\theta$ the $\wedge^k T^*M$-component:
$\theta=\mathrm{pr}_{\wedge^k T^*M} \Theta|_M$. In another words, $\theta=\iota^*\Theta$ for $\iota:M\hookrightarrow \mathcal{G}$.
 \begin{Def}\label{aff form}
A $k$-form $\Theta\in \Omega^k(\mathcal{G})$ on a Lie groupoid $\mathcal{G}$ is affine  if it satisfies
\begin{eqnarray}\label{affine form}
m^*\Theta=\mathrm{pr}_1^*\Theta+\mathrm{pr}_2^*\Theta-\mathrm{pr_1}^*s^*\theta,
\end{eqnarray}
where $\theta:=\mathrm{pr}_{\wedge^k T^*M} \Theta|_M$.
\end{Def}
Since $s\circ \mathrm{pr}_1=t\circ \mathrm{pr}_2: \mathcal{G}^{(2)}\to \mathcal{G}$, the affine condition has another expression
\begin{eqnarray}\label{affine form 2}
m^*\Theta=\mathrm{pr}_1^*\Theta+\mathrm{pr}_2^*\Theta-\mathrm{pr_2}^*t^*\theta.
\end{eqnarray}
One direct consequence of the definition is that the de Rham differential of an affine $k$-form on $\mathcal{G}$ is an affine $k+1$-form.

Unlike the multiplicative case, it is not obvious from (\ref{affine form}) that a $k$-form is affine if the submanifold of parallelograms is isotropic in $\mathcal{G}\times \mathcal{G}\times \mathcal{G}\times \mathcal{G}$.

\begin{Pro}
A $k$-form $\Theta$ on $\mathcal{G}$ is affine if and only if the space of parallelograms \[\Gamma=\{(g,h,l,hg^{-1}l);s(g)=s(h),t(g)=t(l)\}\] is an isotropic submanifold of
$\mathcal{G}\times \mathcal{G}\times \mathcal{G}\times \mathcal{G}$ with respect to $\Theta\oplus -\Theta\oplus -\Theta\oplus \Theta$, that is
\begin{eqnarray}\label{iso}
i^*(\mathrm{pr}_1^* \Theta-\mathrm{pr}_2^* \Theta-\mathrm{pr}_3^*\Theta+\mathrm{pr}_4^*\Theta)=0,
\end{eqnarray}
where $\mathrm{pr}_i: \mathcal{G}\times \mathcal{G}\times \mathcal{G}\times \mathcal{G}\to \mathcal{G}$ is the projection to the $i$-th component and $i:\Gamma\hookrightarrow
\mathcal{G}\times \mathcal{G}\times \mathcal{G}\times \mathcal{G}$ is the inclusion.
\end{Pro}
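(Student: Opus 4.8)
The plan is to reduce both \eqref{affine form} and \eqref{iso} to a single identity of $k$-forms on the auxiliary manifold $P=\{(a,g,l);\ s(a)=t(g)=t(l)\}$, using the reparametrization of the parallelogram that turns its two slanted edges into genuine groupoid products. Concretely, setting $a=hg^{-1}$ gives $h=ag$ and $hg^{-1}l=al$, so the map $\psi:P\to\Gamma$, $\psi(a,g,l)=(g,ag,l,al)$, is a diffeomorphism with inverse $(g,h,l,k)\mapsto(hg^{-1},g,l)$. Because $\psi$ is a diffeomorphism, \eqref{iso} is equivalent to the vanishing on $P$ of $\psi^*i^*(\mathrm{pr}_1^*\Theta-\mathrm{pr}_2^*\Theta-\mathrm{pr}_3^*\Theta+\mathrm{pr}_4^*\Theta)$. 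Writing $q_g,\mu_g,q_l,\mu_l:P\to\mathcal{G}$ for the maps $(a,g,l)\mapsto g,\ ag,\ l,\ al$ respectively, and $q_a(a,g,l)=a$, this pulled-back form is exactly $q_g^*\Theta-\mu_g^*\Theta-q_l^*\Theta+\mu_l^*\Theta$, so it suffices to show this vanishes if and only if \eqref{affine form} holds.

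For the direction affine $\Rightarrow$ isotropic, I would pull back the affine identity \eqref{affine form} along the two maps $\Phi_g,\Phi_l:P\to\mathcal{G}^{(2)}$ given by $\Phi_g(a,g,l)=(a,g)$ and $\Phi_l(a,g,l)=(a,l)$, both of which land in $\mathcal{G}^{(2)}$ since $s(a)=t(g)=t(l)$. As $m\circ\Phi_g=\mu_g$, $\mathrm{pr}_1\circ\Phi_g=q_a$, $\mathrm{pr}_2\circ\Phi_g=q_g$ and $s\circ\mathrm{pr}_1\circ\Phi_g=s\circ q_a$, pulling back \eqref{affine form} yields $\mu_g^*\Theta=q_a^*\Theta+q_g^*\Theta-(s\circ q_a)^*\theta$, and likewise $\mu_l^*\Theta=q_a^*\Theta+q_l^*\Theta-(s\circ q_a)^*\theta$. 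Subtracting these two identities cancels the shared terms $q_a^*\Theta$ and $(s\circ q_a)^*\theta$, leaving precisely $q_g^*\Theta-\mu_g^*\Theta-q_l^*\Theta+\mu_l^*\Theta=0$, i.e.\ \eqref{iso}.

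For the converse isotropic $\Rightarrow$ affine, the idea is to collapse the parallelogram into a single triangle by sending one edge to a unit. I would pull back \eqref{iso} along the embedding $j:\mathcal{G}^{(2)}\to P$, $j(a,g)=(a,g,1_{t(g)})$, noting $1_{t(g)}=\iota(t(g))=\iota(s(a))$. Then $\mu_g\circ j=m$, $q_g\circ j=\mathrm{pr}_2$, and $\mu_l\circ j=\mathrm{pr}_1$ because $a\cdot 1_{t(g)}=a$, while $q_l\circ j=\iota\circ s\circ\mathrm{pr}_1$. Using $\iota^*\Theta=\theta$, the last pullback equals $\mathrm{pr}_1^*s^*\theta$, so the pullback $j^*$ of \eqref{iso} reads $\mathrm{pr}_2^*\Theta-m^*\Theta-\mathrm{pr}_1^*s^*\theta+\mathrm{pr}_1^*\Theta=0$, which is exactly \eqref{affine form}.

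The bookkeeping of the eight composite maps is routine; the one genuinely substantive step — and the place where the definition of $\theta$ enters — is the converse, where specializing $l$ to the unit must reproduce the correction term $\mathrm{pr}_1^*s^*\theta$. The main thing to verify carefully there is the identity $q_l\circ j=\iota\circ s\circ\mathrm{pr}_1$ together with $\iota^*\Theta=\theta$; everything else follows from the same cancellation of the shared terms $q_a^*\Theta$ and $(s\circ q_a)^*\theta$ already exhibited in the forward direction.
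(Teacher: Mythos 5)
Your argument is correct, and it takes a genuinely different route from the paper's. The paper works pointwise with tangent vectors: it parametrizes $T\Gamma$ by $4$-tuples $(X_g,Y_h,Z_l,Y_h\cdot \mathrm{inv}(X)_{g^{-1}}\cdot Z_l)$, gets ``isotropic $\Rightarrow$ affine'' by specializing $g$ to a unit, and gets the converse by first deriving an auxiliary identity for $\Theta$ on inverted tangent vectors (setting $l=h^{-1}$) and then applying the affine identity twice to decompose the triple product. Your change of variables $a=hg^{-1}$, which turns the two slanted edges of the parallelogram into honest groupoid products $ag$ and $al$, eliminates the inverse map entirely: the direction ``affine $\Rightarrow$ isotropic'' becomes the subtraction of two pullbacks of \eqref{affine form} along $\Phi_g,\Phi_l:P\to\mathcal{G}^{(2)}$, with the shared terms $q_a^*\Theta$ and $(s\circ q_a)^*\theta$ cancelling, and the converse is the single specialization $l=1_{t(g)}$ (the paper instead collapses the edge $g$; the two choices are equivalent up to your reparametrization). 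What your approach buys is a coordinate-free computation at the level of forms and smooth maps, with no bookkeeping of multiplied tangent vectors and no need for the inversion formula \eqref{inverse}; what the paper's buys is that the tangent-vector formulation \eqref{triple} is stated in exactly the shape used elsewhere for multiplicativity, and it makes explicit which conormal directions are being tested. All the composites you list ($m\circ\Phi_g=\mu_g$, $\mu_l\circ j=\mathrm{pr}_1$, $q_l\circ j=\iota\circ s\circ\mathrm{pr}_1$, and $(\iota\circ s\circ\mathrm{pr}_1)^*\Theta=\mathrm{pr}_1^*s^*\theta$ via $\theta=\iota^*\Theta$) check out, and $\psi$ is indeed a diffeomorphism since its inverse $(g,h,l,k)\mapsto(hg^{-1},g,l)$ is smooth, so the reduction of \eqref{iso} to a statement on $P$ is legitimate.
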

\begin{proof}
The tangent space of $\Gamma$ at $(g,h,l,hg^{-1}l)$ consists of  4-tuples $(X_g,Y_h, Z_l, Y_h\cdot \mathrm{inv}(X)_{g^{-1}}\cdot Z_l)$
of tangent vectors, where $Y_h\cdot \mathrm{inv}(X)_{g^{-1}}\cdot Z_l$ means the multiplication of three tangent vectors in $T\mathcal{G}$. Applying (\ref{iso}) to $k$ such vectors, we have
\begin{eqnarray}\label{triple}
\nonumber&&\Theta(Y^1_h\cdot \mathrm{inv}(X)^1_{g^{-1}}\cdot Z^1_l,\cdots,Y_h^k\cdot \mathrm{inv}(X)^k_{g^{-1}}\cdot  Z^k_l)\\ &=&
-\Theta(X_g^1,\cdots,X_g^k)+\Theta(Y_h^1,\cdots,Y_h^k)+\Theta(Z_l^1,\cdots,Z_l^k).
\end{eqnarray}
In particular, we choose $(h,l)\in \mathcal{G}^{(2)}$ and $g=1_{t(l)}=1_{s(h)}$. Moreover, $(X^i_g,Y^i_h,Z^i_l)$ are chosen to satisfy $t(Z^i_l)=s(Y^i_h)=X^i_g$. Then the equation becomes
\begin{eqnarray}\label{double}
\Theta(Y^1_h\cdot Z^1_l,\cdots,Y_h^k\cdot  Z^k_l)=
-\Theta(s(Y_h^1),\cdots,s(Y_h^k))+\Theta(Y_h^1,\cdots,Y_h^k)+\Theta(Z_l^1,\cdots,Z_l^k).
\end{eqnarray}
This is exactly (\ref{affine form}).

Conversely, if $\Theta$ is affine, by setting $l=h^{-1}$ and $Z_l^i=\mathrm{inv}(Y)_l^i$ in (\ref{double}), we get
\begin{eqnarray}\label{inverse}
\Theta(\mathrm{inv}(Y)_l^1,\cdots,\mathrm{inv}(Y)_l^k)=\Theta(s(Y_h^1),\cdots,s(Y_h^k))-\Theta(Y_h^1,\cdots,Y_h^k)+\Theta(t( Y_h^1),\cdots,t(Y_h^{k})).
\end{eqnarray}
Applying (\ref{double}) twice to the left hand side of (\ref{triple}), we obtain
\begin{eqnarray*}
&&\Theta(Y^1_h\cdot \mathrm{inv}(X)^1_{g^{-1}}\cdot Z^1_l,\cdots,Y_h^k\cdot \mathrm{inv}(X)^k_{g^{-1}}\cdot  Z^k_l)\\ &=&
\Theta(Y_h^1,\cdots,Y_h^k)+\Theta(\mathrm{inv}(X)^1_{g^{-1}}\cdot  Z^1_l,\cdots,\mathrm{inv}(X)^k_{g^{-1}}\cdot  Z^k_l)-\Theta(s(X_g^1),\cdots,s(X_g^k))\\ &=&\Theta(Y_h^1,\cdots,Y_h^k)+\Theta(\mathrm{inv}(X)^1_{g^{-1}},\cdots,\mathrm{inv}(X)^1_{g^{-1}})+\Theta(Z_l^1,\cdots,Z_l^k)\\ &&-\Theta(t(X^1_g),\cdots,t(X_g^k))-\Theta(s(X_g^1),\cdots,s(X_g^k))\\ &=&
\Theta(Y_h^1,\cdots,Y_h^k)-\Theta(X_g^1,\cdots,X_g^k)+\Theta(Z_l^1,\cdots,Z_l^k),
\end{eqnarray*}
where we have used (\ref{inverse}) in the last step. Hence, we get (\ref{iso}).
\end{proof}

Regarding to the relation between multiplicative and affine $k$-forms, we have already seen that a multiplicative $k$-form is an affine $k$-form which is isotropic on $M$. On the other hand,  an affine $k$-form is associated with two multiplicative $k$-forms.

\begin{Pro}\label{aff mul form}
Let  $\Theta\in \Omega^k(\mathcal{G})$ be a $k$-form on $\mathcal{G}$ with $\theta=\mathrm{pr}_{\wedge^k T^*M} \Theta|_M\in \Omega^k(M)$.
Define two $k$-forms on $\mathcal{G}$:
\[\Theta_l:=\Theta-s^*\theta,\qquad \Theta_r:=\Theta-t^*\theta.\]
Then $\Theta$ is affine if and only if $\Theta_l$ ($\Theta_r$) is a multiplicative $k$-form.
\end{Pro}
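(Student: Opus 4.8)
The plan is to reduce the multiplicativity equation for $\Theta_l$ (respectively $\Theta_r$) directly to the affine condition \eqref{affine form} (respectively \eqref{affine form 2}) by a short pullback computation on the space $\mathcal{G}^{(2)}$ of multiplicable pairs. First I would record two elementary facts. On $\mathcal{G}^{(2)}$ one has $s\circ m=s\circ \mathrm{pr}_2$ and $t\circ m=t\circ \mathrm{pr}_1$, since the source of $gh$ is the source of $h$ and the target of $gh$ is the target of $g$. Second, because $s$ and $t$ both restrict to the identity on $M$ along the inclusion $\iota$, the $\wedge^k T^*M$-component of $(s^*\theta)|_M$ and of $(t^*\theta)|_M$ equals exactly $\theta$: to extract this component one evaluates on vectors tangent to $M$, on which $ds$ and $dt$ act as the identity. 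Consequently $\mathrm{pr}_{\wedge^k T^*M}\Theta_l|_M=\mathrm{pr}_{\wedge^k T^*M}\Theta_r|_M=\theta-\theta=0$, so both $\Theta_l$ and $\Theta_r$ are isotropic on $M$, as any multiplicative form must be.

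Next I would expand the multiplicativity condition $m^*\Theta_l=\mathrm{pr}_1^*\Theta_l+\mathrm{pr}_2^*\Theta_l$ for $\Theta_l=\Theta-s^*\theta$. By the first fact the left-hand side is $m^*\Theta-(s\circ m)^*\theta=m^*\Theta-\mathrm{pr}_2^*s^*\theta$, while the right-hand side is $\mathrm{pr}_1^*\Theta+\mathrm{pr}_2^*\Theta-\mathrm{pr}_1^*s^*\theta-\mathrm{pr}_2^*s^*\theta$. The two copies of $\mathrm{pr}_2^*s^*\theta$ cancel, and what remains is precisely $m^*\Theta=\mathrm{pr}_1^*\Theta+\mathrm{pr}_2^*\Theta-\mathrm{pr}_1^*s^*\theta$, namely the affine condition \eqref{affine form}. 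Hence $\Theta_l$ is multiplicative if and only if $\Theta$ is affine.

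The case of $\Theta_r=\Theta-t^*\theta$ is entirely parallel: using $t\circ m=t\circ \mathrm{pr}_1$ one obtains $m^*(t^*\theta)=\mathrm{pr}_1^*t^*\theta$, the two copies of $\mathrm{pr}_1^*t^*\theta$ cancel, and the multiplicativity equation for $\Theta_r$ collapses to the alternative affine condition \eqref{affine form 2}. Since \eqref{affine form} and \eqref{affine form 2} are equivalent, differing only by the already-noted identity $s\circ \mathrm{pr}_1=t\circ \mathrm{pr}_2$, this settles the statement for $\Theta_r$ as well, and both implications follow at once because each reduction is an equivalence of equalities.

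The only delicate point, and the one I would be most careful about, is the component bookkeeping in the first step: one must check that subtracting $s^*\theta$ (respectively $t^*\theta$) removes exactly the $\wedge^k T^*M$-part of $\Theta|_M$ while leaving the multiplicativity equation unchanged up to the cancelling term. In the $t^*\theta$ case one must also not be misled by the fact that $dt$ does not annihilate $A$ but maps it into $TM$ via the anchor $\rho$; since $\mathrm{pr}_{\wedge^k T^*M}$ only tests vectors in $TM$, the relevant component is still $\theta$. Everything else is a clean cancellation of pullback terms along $m$, $\mathrm{pr}_1$ and $\mathrm{pr}_2$.
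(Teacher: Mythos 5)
Your proposal is correct and follows essentially the same route as the paper's proof: both hinge on the identities $s\circ m=s\circ\mathrm{pr}_2$ and $t\circ m=t\circ\mathrm{pr}_1$ to cancel the pullback terms and reduce the multiplicativity of $\Theta_l$ (resp.\ $\Theta_r$) to the affine condition \eqref{affine form} (resp.\ \eqref{affine form 2}). The extra bookkeeping you do on the $\wedge^k T^*M$-component of $\Theta_l|_M$ and $\Theta_r|_M$ is a harmless consistency check rather than a needed step, since multiplicativity is defined by the pullback equation alone.
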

\begin{proof}
By straightforward calculation, we have
\[m^*\Theta_l=m^*\Theta-m^*s^*\theta,\qquad \mathrm{pr}_1^*\Theta_l=\mathrm{pr}_1^*\Theta-\mathrm{pr}_1^*s^*\theta,\qquad
\mathrm{pr}_2^*\Theta_l=\mathrm{pr}_2^*\Theta-\mathrm{pr}_2^*s^*\theta.\]
Following from $s\circ \mathrm{pr_2}=s\circ m:\mathcal{G}^{(2)}\to \mathcal{G}$, we see that the equation
$m^*\Theta_l=\mathrm{pr}_1^*\Theta_l+\mathrm{pr}_2^*\Theta_l$ holds if and only if (\ref{affine form}) holds.
Similarly, noticing that $t\circ \mathrm{pr}_1=t\circ m$, we get that $\Theta_r$ is multiplicative if and only if $\Theta$ satisfies
(\ref{affine form 2}), that is, $\Theta$ is affine.
\end{proof}

Denote by $\Omega_{\mathrm{aff}}^k(\mathcal{G})$ and $\Omega_{\mathrm{mult}}^k(\mathcal{G})$ the spaces of affine and multiplicative $k$-forms respectively. It is immediate that $\Omega_{\mathrm{aff}}^k(\mathcal{G})$ is a vector space with $\Omega_{\mathrm{mult}}^k(\mathcal{G})$ being a linear subspace.
\begin{Thm}\label{2-group for form}
We have a $2$-vector space
\[\Omega^k_{\mathrm{aff}}(\mathcal{G})\rightrightarrows \Omega^k_{\mathrm{mult}}(\mathcal{G}),\]
where the groupoid structure is given as follows: the source and target maps are
\[s(\Theta)=\Theta_r,\qquad t(\Theta)=\Theta_l,\qquad \forall \Theta\in \Omega^k_{\mathrm{aff}}(\mathcal{G}),\]
and the multiplication is
\[\Theta* \Theta'=\Theta+s^*\theta',\qquad \theta'=\mathrm{pr}_{\wedge^k T^*M} \Theta'|_M\]
for a pair $\Theta,\Theta'\in \Omega^k_{\mathrm{aff}}(\mathcal{G})$ such that $\Theta_r=\Theta'_l$.
\end{Thm}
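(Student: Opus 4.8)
The plan is to mirror the proof of Theorem~\ref{2-group for vf}, with Proposition~\ref{aff mul form} playing the role that Proposition~\ref{leftright} played there. Since $\Omega^k_{\mathrm{aff}}(\mathcal{G})$ and $\Omega^k_{\mathrm{mult}}(\mathcal{G})$ are vector spaces and the maps $s,t,\iota,*$ are all visibly linear in $\Theta$ (note that $\theta=\iota^*\Theta$ depends linearly on $\Theta$), the linearity requirement in the definition of a $2$-vector space is immediate; the real content is to check the category/groupoid axioms.

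First I would record the single computational ingredient that makes all the bookkeeping work, namely that $s\circ\iota=t\circ\iota=\mathrm{id}_M$ for the inclusion $\iota:M\hookrightarrow\mathcal{G}$, so that $\iota^*s^*\theta'=\iota^*t^*\theta'=\theta'$. As a consequence, the $\wedge^kT^*M$-component of $\Theta*\Theta'=\Theta+s^*\theta'$ is $\iota^*(\Theta+s^*\theta')=\theta+\theta'$.

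Next I would verify source/target compatibility. The composability condition $\Theta_r=\Theta'_l$ unpacks to $\Theta-t^*\theta=\Theta'-s^*\theta'$. Combining this with the previous observation, a direct substitution gives $s(\Theta*\Theta')=(\Theta+s^*\theta')-t^*(\theta+\theta')=\Theta'-t^*\theta'=s(\Theta')$ and $t(\Theta*\Theta')=(\Theta+s^*\theta')-s^*(\theta+\theta')=\Theta-s^*\theta=t(\Theta)$, exactly as required of a groupoid multiplication. Associativity then follows because, for a third affine form $\Theta''$ with $\Theta'_r=\Theta''_l$, both $(\Theta*\Theta')*\Theta''$ and $\Theta*(\Theta'*\Theta'')$ reduce to $\Theta+s^*\theta'+s^*\theta''$, where the second identity again uses that the $\wedge^kT^*M$-component of $\Theta'*\Theta''$ is $\theta'+\theta''$.

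For units and inverses I would exploit that a multiplicative $k$-form has vanishing $\wedge^kT^*M$-component. The identity-assigning map sends $\Theta_0\in\Omega^k_{\mathrm{mult}}(\mathcal{G})$ to itself, and then $\Theta_l*\Theta=\Theta_l+s^*\theta=\Theta$ (left unit) while $\Theta*\Theta_r=\Theta$ (right unit, since $\Theta_r$ has zero $\wedge^kT^*M$-component). The inverse of $\Theta$ is $\Theta^{-1}=\Theta-(s^*\theta+t^*\theta)$, whose $\wedge^kT^*M$-component is $-\theta$; one checks $s(\Theta^{-1})=\Theta_l=t(\Theta)$, $t(\Theta^{-1})=\Theta_r=s(\Theta)$, and $\Theta*\Theta^{-1}=\Theta-s^*\theta=\Theta_l=\iota(t(\Theta))$. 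I do not anticipate a genuine obstacle: the whole argument is formally parallel to the $k$-vector field case, and the only point demanding care is the consistent tracking of the $\wedge^kT^*M$-components under the pullbacks $s^*$ and $t^*$, which is settled once and for all by the identity $s\circ\iota=t\circ\iota=\mathrm{id}_M$.
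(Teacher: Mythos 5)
Your proposal is correct and follows exactly the route the paper intends: the paper's own proof of this theorem is just the remark that it is "similar to that for Theorem \ref{2-group for vf}", and your verification (with $\iota^*s^*\theta'=\iota^*t^*\theta'=\theta'$ playing the role of the identity $\mathrm{pr}_{\wedge^k A}\overleftarrow{\pi'}|_M=\pi'$ used there) is precisely that parallel argument carried out in detail. All the computations of $s(\Theta*\Theta')$, $t(\Theta*\Theta')$, associativity, units and the inverse $\Theta^{-1}=\Theta-(s^*\theta+t^*\theta)$ check out.
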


\begin{proof}
The proof is similar to that for Theorem \ref{2-group for vf}.
\end{proof}

\begin{Cor}
The $2$-term chain complex of vector spaces associated to the above $2$-vector space $\Omega^k_{\mathrm{aff}}(\mathcal{G})\rightrightarrows \Omega^k_{\mathrm{mult}}(\mathcal{G})$ is
\[\Omega^k(M)\to \Omega^k_{\mathrm{mult}}(\mathcal{G}),\qquad \theta\mapsto t^*\theta-s^*\theta.\]
\end{Cor}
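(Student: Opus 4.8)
The plan is to apply the general recipe recalled in the preliminaries: for a $2$-vector space $V_1\rightrightarrows V_0$, the associated $2$-term chain complex is the restriction of the target map to the kernel of the source, $t:\ker s\to V_0$. Here $V_1=\Omega^k_{\mathrm{aff}}(\mathcal{G})$ and $V_0=\Omega^k_{\mathrm{mult}}(\mathcal{G})$, and by Theorem~\ref{2-group for form} the structure maps are $s(\Theta)=\Theta_r=\Theta-t^*\theta$ and $t(\Theta)=\Theta_l=\Theta-s^*\theta$, with $\theta=\mathrm{pr}_{\wedge^kT^*M}\Theta|_M=\iota^*\Theta$. So the whole content is to determine $\ker s$ and then restrict $t$ to it.

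First I would identify $\ker s$. An affine $k$-form satisfies $s(\Theta)=\Theta_r=0$ exactly when $\Theta=t^*\theta$, so I would show conversely that every form $t^*\vartheta$ with $\vartheta\in\Omega^k(M)$ lies in $\Omega^k_{\mathrm{aff}}(\mathcal{G})$ and has $\wedge^kT^*M$-component precisely $\vartheta$. The component claim is immediate from $t\circ\iota=\mathrm{id}_M$, which gives $\iota^*t^*\vartheta=\vartheta$. For affineness I would check the defining identity \eqref{affine form} directly: since $t\circ m=t\circ\mathrm{pr}_1$ we have $m^*t^*\vartheta=\mathrm{pr}_1^*t^*\vartheta$, while the remaining two terms on the right cancel because $s\circ\mathrm{pr}_1=t\circ\mathrm{pr}_2$ on $\mathcal{G}^{(2)}$ forces $\mathrm{pr}_1^*s^*\vartheta=\mathrm{pr}_2^*t^*\vartheta$. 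Together with the component computation this yields $(t^*\vartheta)_r=t^*\vartheta-t^*\vartheta=0$, so $\theta\mapsto t^*\theta$ is a linear isomorphism $\Omega^k(M)\cong\ker s$.

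Finally I would compute the differential $t|_{\ker s}$. For $\Theta=t^*\theta\in\ker s$, whose component is $\theta$, we get $t(\Theta)=\Theta_l=\Theta-s^*\theta=t^*\theta-s^*\theta$, and this is multiplicative by Proposition~\ref{aff mul form}. Transporting along the isomorphism $\theta\mapsto t^*\theta$, the differential becomes $\theta\mapsto t^*\theta-s^*\theta$, which is exactly the asserted complex.

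There is no real obstacle: the argument is bookkeeping built on Theorem~\ref{2-group for form} and the general $2$-vector-space recipe, and the only step needing a little care is verifying that $t^*\vartheta$ is affine, which rests on the structural identities $t\circ m=t\circ\mathrm{pr}_1$ and $s\circ\mathrm{pr}_1=t\circ\mathrm{pr}_2$ on the space of multiplicable pairs. It is worth noting the exact parallel with the multivector-field corollary, where $\ker s=\Gamma(\wedge^kA)$ and the differential was $\pi\mapsto\overrightarrow{\pi}-\overleftarrow{\pi}$; here the right/left translation pair is replaced by the pullback pair yielding $t^*\theta-s^*\theta$.
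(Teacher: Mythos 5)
Your proof is correct and follows exactly the route the paper intends: the corollary is stated without proof, relying on the general recipe $t:\ker s\to V_0$ from the preliminaries together with Theorem \ref{2-group for form}, and your identification of $\ker s$ with $\{t^*\theta:\theta\in\Omega^k(M)\}$ (using $t\circ\iota=\mathrm{id}_M$ and the identities $t\circ m=t\circ\mathrm{pr}_1$, $s\circ\mathrm{pr}_1=t\circ\mathrm{pr}_2$) is precisely the bookkeeping that is being left implicit. The affineness of $t^*\vartheta$ that you verify is also recorded in the paper as Example 4.8, so nothing is missing.
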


It is seen from the definition that the affine and multiplicative forms are closed under the de Rham differential. So we get two subcomplexes of the de Rham complex on $\mathcal{G}$:\[(\Omega^\bullet_{\mathrm{mult}}(\mathcal{G}),d)\subset (\Omega^\bullet_{\aff}(\mathcal{G}),d)\subset (\Omega^\bullet(\mathcal{G}),d).\]
\begin{Pro}\label{4.6}
The map
\[\Phi: \Omega^\bullet_{\aff}(\mathcal{G})\to \Omega^\bullet_{\mathrm{mult}}(\mathcal{G})\oplus \Omega^\bullet(M),\qquad \Theta\mapsto (\Theta-t^*\theta,\theta),\quad \theta=\mathrm{pr}_{\wedge^k T^*M} \Theta|_M\]
is an isomorphism of cochain complexes, where the differentials are the de Rham differential. Thus we get an isomorphism on the cohomology
\[H^\bullet_{\aff}(\mathcal{G})\cong H_{\mathrm{mult}}^\bullet(\mathcal{G})\oplus H^\bullet(M).\]
\end{Pro}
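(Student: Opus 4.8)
The plan is to exhibit an explicit two-sided inverse $\Psi$ to $\Phi$ and then to check that $\Phi$ intertwines the two differentials; since a bijective cochain map is an isomorphism of complexes, the cohomology statement follows immediately. Throughout I rely on the already established equivalence (Proposition \ref{aff mul form}) that $\Theta$ is affine if and only if $\Theta_r=\Theta-t^*\theta$ is multiplicative, together with the identification $\theta=\iota^*\Theta$ recorded just before Definition \ref{aff form}, where $\iota:M\hookrightarrow\mathcal{G}$ is the unit inclusion.

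First I would check that $\Phi$ is well defined and linear. For affine $\Theta$ the first component $\Theta-t^*\theta=\Theta_r$ is multiplicative by Proposition \ref{aff mul form}, and the second component $\theta=\iota^*\Theta$ is a genuine $k$-form on $M$; both depend linearly on $\Theta$ because $\iota^*$ and $t^*$ are linear. For the inverse I would set $\Psi(\Xi,\theta)=\Xi+t^*\theta$ for $\Xi\in\Omega^k_{\mathrm{mult}}(\mathcal{G})$ and $\theta\in\Omega^k(M)$. To see that $\Psi$ lands in $\Omega^k_{\aff}(\mathcal{G})$, note that $\bigl(\Xi+t^*\theta\bigr)-t^*\theta=\Xi$ is multiplicative, so by Proposition \ref{aff mul form} it suffices to know that the $\wedge^k T^*M$-component of $\Xi+t^*\theta$ is exactly $\theta$; this is immediate from $\iota^*\Xi=0$ (multiplicativity forces $\Xi$ to be isotropic on $M$) and $\iota^*t^*\theta=(t\circ\iota)^*\theta=\theta$, since $t\circ\iota=\id_M$. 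The two relations $\Psi\circ\Phi=\id$ and $\Phi\circ\Psi=\id$ are then one-line verifications using these same facts.

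The heart of the argument is compatibility with the de Rham differential, and the decisive observation is that taking the $\wedge^k T^*M$-component commutes with $d$: since $\theta=\iota^*\Theta$ and $\iota^*$ commutes with $d$, the corresponding component of $d\Theta$ is $\iota^*d\Theta=d\iota^*\Theta=d\theta$. Granting that affine forms are closed under $d$ (stated just before this proposition), so that $\Phi(d\Theta)$ makes sense, I would compute
\[
\Phi(d\Theta)=\bigl(d\Theta-t^*d\theta,\;d\theta\bigr),\qquad
d\,\Phi(\Theta)=\bigl(d(\Theta-t^*\theta),\;d\theta\bigr),
\]
and these agree because $d$ commutes with $t^*$. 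Hence $\Phi$ is a cochain map, and being bijective it is an isomorphism of cochain complexes; passing to cohomology yields $H^\bullet_{\aff}(\mathcal{G})\cong H^\bullet_{\mathrm{mult}}(\mathcal{G})\oplus H^\bullet(M)$.

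I expect the only genuinely delicate point to be the identity for the second component, namely that the projection $\mathrm{pr}_{\wedge^k T^*M}(\,\cdot\,)|_M$ really coincides with the pullback $\iota^*$ and therefore commutes with $d$; once this is phrased correctly, everything else reduces to the bookkeeping relations $t\circ\iota=\id_M$ and the linearity of pullback together with its commutation with $d$, all of which are routine.
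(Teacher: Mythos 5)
Your proposal is correct and follows essentially the same route as the paper's proof: the same inverse $(\Lambda,\lambda)\mapsto\Lambda+t^*\lambda$, and the same key observation that $\theta=\iota^*\Theta$ so that taking the $\wedge^k T^*M$-component commutes with $d$, whence $d\circ\Phi=\Phi\circ d$. You merely spell out in more detail the well-definedness checks (that $\Psi$ lands in $\Omega^\bullet_{\aff}(\mathcal{G})$, using $\iota^*\Xi=0$ and $t\circ\iota=\id_M$) that the paper leaves implicit.
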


\begin{proof}
The inverse of $\Phi$ can be defined by $(\Lambda,\lambda)\mapsto \Lambda+t^*\lambda$ for any $\Lambda\in \Omega^k_{\mathrm{mult}}(\mathcal{G})$ and $\lambda\in \Omega^k(M)$. So $\Phi$ is an isomorphism.  Next, we check that it is a cochain map, namely, $d\circ \Phi=\Phi\circ d$. Since $\theta=\iota^*\Theta$ for $\iota:M\hookrightarrow \mathcal{G}$,  we have $d\theta=\mathrm{pr}_{\wedge^{k+1} T^*M} d\Theta$. Then we have
\[d\circ \Phi(\Theta)=(d\Theta-t^*d \theta,d\theta)=\Phi\circ d(\Theta).\]
Thus it induces an isomorphism on the cohomology.
\end{proof}

Now we discuss the infinitesimal of affine $k$-forms. It is known from \cite{BC,C} that there is a one-to-one correspondence between multiplicative $k$-forms on $\mathcal{G}$ and $IM$ $k$-forms on its Lie algebroid $A$ when $\mathcal{G}$ is $s$-connected and $s$-simply connected.

A pair $(\mu,\nu)$ of bundle maps
\[\mu:A\to \wedge^{k-1} T^*M,\qquad \nu:A\to \wedge^k T^*M,\qquad k\geq 1\]
is called an {\bf IM k-form} on $A$ if it satisfies that
\begin{eqnarray*}
\iota_{\rho(X)} \mu(Y)&=&-\iota_{\rho(Y)} \mu(X),\\
\mu([X,Y])&=&\mathcal{L}_{\rho(X)} \mu(Y)-\iota_{\rho(Y)} d\mu(X)-\iota_{\rho(Y)} \nu(X),\\
\nu([X,Y])&=&\mathcal{L}_{\rho(X)}\nu(Y)-\iota_{\rho(Y)} d\nu(X)\qquad \forall X,Y\in \Gamma(A).
\end{eqnarray*}
The pair $(\mu,\nu)$ determines a linear $k$-form on the vector bundle $A$.  We can describe  these conditions in the way that
the induced map $\oplus_A^k TA\to \mathbbm{R}$ is a Lie algebroid morphism with the tangent Lie algebroid structure on $\oplus_A^k TA\to \oplus_A^k TM$ and the trivial Lie algebroid structure on $\mathbbm{R}\to 0$. See \cite{BC} for details.

%\begin{Thm}\cite{BC}
%Let $\mathcal{G}$ be a source simply-connected Lie groupoid over $M$ with Lie algebroid $A$. Then there is a one-one correspondence between multiplicative $k$-forms on $\mathcal{G}$ and IM $k$-forms on $A$: \[\Omega^k_{mult}\to \Omega_{IM}^k(A),\qquad \alpha\mapsto (\mu,\nu)\] where
%\[\langle \mu(a),X_1\wedge\cdots\wedge X_{k-1}\rangle=\alpha(\overrightarrow{a},X_1,\cdots,X_{k-1}),\qquad \langle \nu(a),X_1\wedge\cdots\wedge X_k\rangle=d\alpha(\overrightarrow{a},X_1,\cdots,X_k),\]
%for $a\in \Gamma(A)$ and $X_1,\cdots,X_k\in \mathfrak{X}(M)$.
%\end{Thm}
By Proposition \ref{4.6},  the infinitesimal corresponding of affine $k$-forms on Lie groupoid $\mathcal{G}$ is clear.
\begin{Pro}
There is a one-one correspondence between affine $k$-forms $\Theta$ and triples $(\mu,\nu,\theta)$ of an IM-form $(\mu,\nu)$ and $\theta\in \Omega^k(M)$. That is
\[\Omega^k_{\aff}(\mathcal{G})\cong \Omega_{\mathrm{mult}}^k(\mathcal{G})\oplus \Omega^k(M)\cong \Omega_{\mathrm{IM}}^k(A)\oplus \Omega^k(M),\]
\[\Theta\mapsto(\Theta_r:=\Theta-t^*\theta,\theta:=\mathrm{pr}_{\wedge^k T^*M} \Theta|_M)\mapsto (\mu,\nu, \theta).\]
\end{Pro}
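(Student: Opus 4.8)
The plan is to obtain the stated correspondence by composing two isomorphisms, each of which is already at hand, so that no new analysis is needed. The first isomorphism $\Omega^k_{\aff}(\mathcal{G})\cong \Omega_{\mathrm{mult}}^k(\mathcal{G})\oplus \Omega^k(M)$ is nothing but the degree-$k$ component of the cochain isomorphism $\Phi$ constructed in Proposition \ref{4.6}: there we sent $\Theta$ to $(\Theta-t^*\theta,\theta)=(\Theta_r,\theta)$ and exhibited the inverse $(\Lambda,\lambda)\mapsto \Lambda+t^*\lambda$. Thus I would simply invoke Proposition \ref{4.6}, observing that in each fixed degree $k$ the linear map $\Theta\mapsto(\Theta_r,\theta)$ is already a bijection onto $\Omega_{\mathrm{mult}}^k(\mathcal{G})\oplus\Omega^k(M)$.

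For the second isomorphism I would invoke the integration theory of \cite{BC,C}: when $\mathcal{G}$ is $s$-connected and $s$-simply connected, differentiating a multiplicative $k$-form produces an IM $k$-form, and this assignment is a bijection $\Omega_{\mathrm{mult}}^k(\mathcal{G})\cong \Omega_{\mathrm{IM}}^k(A)$ sending $\Theta_r$ to the pair $(\mu,\nu)$ satisfying the recalled IM-form axioms. Composing the two bijections yields $\Theta\mapsto(\Theta_r,\theta)\mapsto(\mu,\nu,\theta)$, which is exactly the chain of identifications displayed in the statement.

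I do not expect a serious obstacle, since the analytic content has already been packaged into Proposition \ref{4.6} and into the cited IM-form correspondence. The only points requiring a word of care are, first, that the second isomorphism rests on the standing $s$-connectedness and $s$-simple-connectedness hypothesis, which I would state explicitly; and second, that the $\wedge^k T^*M$-component $\theta$ is a genuinely free parameter, independent of the multiplicative datum $\Theta_r$. The latter is precisely what the direct-sum decomposition of Proposition \ref{4.6} guarantees: $\theta$ is recovered as $\iota^*\Theta$, while $\Theta_r=\Theta-t^*\theta$ carries no $\wedge^k T^*M$-component, so the two pieces do not interact and the assignment $\Theta\mapsto(\Theta_r,\theta)$ is simultaneously injective and surjective.
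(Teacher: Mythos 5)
Your proposal is correct and follows exactly the route the paper intends: the paper states this proposition without a separate proof, prefacing it only with the remark that it follows from Proposition \ref{4.6} together with the correspondence between multiplicative $k$-forms and IM $k$-forms from \cite{BC,C} under the $s$-connected, $s$-simply connected hypothesis. Your explicit attention to that hypothesis and to the independence of the component $\theta$ is a welcome (and accurate) elaboration of what the paper leaves implicit.
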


%\begin{Rm}
%Another description of a multiplicative $k$-form on a Lie groupoid $\mathcal{G}$ is that it defines a morphism of Lie groupoids $\Pi:( \oplus^{k-1} T\mathcal{G}\rightrightarrows \oplus^{k-1} TM) \to (T^*\mathcal{G}\rightrightarrows A^*)$.
%An affine structure differs from a multiplicative structure by a $k$-form on $M$, which gives a map $\oplus^{k-1} TM\to T^*M$.
%Actually, if we treat multiplicative $k$-forms as functors, an affine structure is a natural transformation between the related two multiplicative structures. So the affine structure is the categorification of multiplicative structures.
%\end{Rm}

\begin{Ex} For a Lie groupoid $\mathcal{G}\rightrightarrows M$, given any $\theta\in \Omega^k(M)$, then $s^*\theta$ and $t^*\theta$ are affine $k$-forms on $\mathcal{G}$ and $s^*\theta-t^*\theta$ is a multiplicative $k$-form on $\mathcal{G}$.
\end{Ex}
\begin{Ex}
On a Lie group $G$, affine $k$-forms are multiplicative $k$-forms. They are nonzero only when $k$ is $0$ and $1$.
This is because for any $k\geq 2$, \[\Theta((X_1, 0,X_3,\cdots,X_k)\cdot(0,Y_2,Y_3,\cdots,Y_k))=\Theta(R_{h_1} X_1,L_{g_2} Y_2, X_3\cdot Y_3,\cdots,X_k \cdot Y_k)=0\] for $X_i\in T_{g_i} G, Y_j\in T_{h_j} G$. So multiplicative $1$-forms on a Lie group are always closed.

On the abelian group $\mathbbm{R}^n$, multiplicative $1$-forms are constant $1$-forms. In local coordinates, they have the form $\Theta=\sum_i c_i dx^i$, where $c_i$ is a constant.
\end{Ex}

\begin{Ex}

For the pair Lie groupoid $M\times M\rightrightarrows M$, multiplicative $k$-forms all have the form $\mathrm{pr}_1^* \alpha-\mathrm{pr}_2^* \alpha$ for $\alpha\in \Omega^k(M)$, where $\mathrm{pr}_i:M\times M\to M$ is the projection to the $i$-th component. And affine $k$-forms are of the form $\pr_1^*\alpha+\pr_2^* \beta$ for any two $k$-forms $\alpha,\beta\in \Omega^k(M)$.

%This is followed from the fact that the IM $k$-forms on $TM$ are just pairs of $(\alpha,d\alpha)$ with $\alpha\in \Omega^k(M)$.
\end{Ex}

\section{Affine tensors on a Lie groupoid}

\subsection{Definition of affine tensors}
We shall define a tensor to be affine if it is affine when seen as a function on a more complicated Lie groupoid. This is    motivated by the notion of  multiplicative $(p,q)$-tensors introduced in \cite{BD}.

Affine functions on a Lie groupoid $\mathcal{G}\rightrightarrows M$ are naturally defined as affine $0$-forms on $\mathcal{G}$.

\begin{Def}
A function $F\in C^\infty(\mathcal{G})$ is affine if it satisfies
\begin{eqnarray}\label{affine function}
F(gh)=F(g)+F(h)-F(s(g)),\qquad \forall (g,h)\in \mathcal{G}^{(2)},
\end{eqnarray}
 or $F(gh)=F(g)+F(h)-F(t(h))$ as $s(g)=t(h)$.
 \end{Def}
 In particular, if an affine function $F$ satisfies $F|_M=0$,  it is called a multiplicative function. By the definition, the space of affine functions is a vector space with the space of multiplicative functions as a subspace.

 As a corollary of Proposition \ref{aff mul form} for $0$-forms, we have

 \begin{Lem}\label{tt}
 Let $F\in C^\infty(\mathcal{G})$ be a function on a Lie groupoid $\mathcal{G}\rightrightarrows M$ and $f=\iota^* F\in C^\infty(M)$  the restriction of $F$ on $M$, where $\iota: M\hookrightarrow \mathcal{G}$ is the natural inclusion. Define
 \[F_l=F-s^*f,\qquad F_r=F-t^*f.\]
 Then $F$ is affine if and only if $F_l$ or $F_r$ is a multiplicative function on $\mathcal{G}$.
 \end{Lem}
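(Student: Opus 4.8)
The plan is to recognize Lemma~\ref{tt} as the degree-zero instance of Proposition~\ref{aff mul form} and to verify that all the constructions specialize correctly. For a $0$-form $F\in C^\infty(\mathcal{G})$, the restriction $\Theta|_M$ has only the single $\wedge^0 T^*M$-component, so in the notation of Proposition~\ref{aff mul form} we have $\theta=\mathrm{pr}_{\wedge^0 T^*M}F|_M=\iota^*F=f$, which is exactly the $f$ in the statement. The definitions $F_l=F-s^*f$ and $F_r=F-t^*f$ are then literally $\Theta_l=\Theta-s^*\theta$ and $\Theta_r=\Theta-t^*\theta$ for $k=0$, so the lemma follows by directly invoking the proposition.

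To make the argument self-contained at the level of functions, I would unwind the pullback conditions explicitly rather than only quoting the proposition. The multiplicativity condition $m^*\Theta=\mathrm{pr}_1^*\Theta+\mathrm{pr}_2^*\Theta$ for a $0$-form reads $F(gh)=F(g)+F(h)$ for all $(g,h)\in\mathcal{G}^{(2)}$, i.e.\ $F$ is a multiplicative function in the sense of the definition preceding the lemma (note $F|_M=0$ is automatic here since $m^*\Theta=\mathrm{pr}_1^*\Theta+\mathrm{pr}_2^*\Theta$ forces $f(x)=2f(x)$ on units). Likewise the affine condition~\eqref{affine form} becomes $m^*F=\mathrm{pr}_1^*F+\mathrm{pr}_2^*F-\mathrm{pr}_1^*s^*f$, which is exactly $F(gh)=F(g)+F(h)-F(s(g))$, recovering~\eqref{affine function}.

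Then I would check the two equivalences by the same algebraic manipulation used in the proof of Proposition~\ref{aff mul form}. For $F_l=F-s^*f$, using $s\circ\mathrm{pr}_2=s\circ m$ on $\mathcal{G}^{(2)}$, the three pullbacks satisfy $m^*F_l=m^*F-m^*s^*f$, $\mathrm{pr}_1^*F_l=\mathrm{pr}_1^*F-\mathrm{pr}_1^*s^*f$, and $\mathrm{pr}_2^*F_l=\mathrm{pr}_2^*F-\mathrm{pr}_2^*s^*f$; substituting and cancelling shows $m^*F_l=\mathrm{pr}_1^*F_l+\mathrm{pr}_2^*F_l$ holds precisely when~\eqref{affine function} holds. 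Symmetrically, using $t\circ\mathrm{pr}_1=t\circ m$, one gets that $F_r$ is multiplicative iff $F$ satisfies the alternative affine condition $F(gh)=F(g)+F(h)-F(t(h))$. Since the two forms of the affine condition agree (as $s(g)=t(h)$ on a multiplicable pair), both statements are equivalent to $F$ being affine.

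There is essentially no obstacle here; the content is bookkeeping to confirm that the $k=0$ specialization is well-posed. The only point that deserves a line of care is that a multiplicative function automatically restricts to zero on $M$, so that the notion of ``multiplicative function'' defined locally by $F|_M=0$ plus affineness coincides with the $0$-form multiplicativity $m^*F=\mathrm{pr}_1^*F+\mathrm{pr}_2^*F$; this is immediate by evaluating on units. Given that, the shortest honest proof is simply the one sentence: this is Proposition~\ref{aff mul form} for $k=0$, with $\theta$ reading as $f=\iota^*F$.
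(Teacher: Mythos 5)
Your proposal is correct and follows exactly the paper's route: the paper states this lemma as a corollary of Proposition \ref{aff mul form} specialized to $0$-forms, with $\theta$ reading as $f=\iota^*F$, and your explicit unwinding via $s\circ\mathrm{pr}_2=s\circ m$ and $t\circ\mathrm{pr}_1=t\circ m$ reproduces the proof of that proposition. The extra observation that multiplicativity forces $F|_M=0$ is a correct and worthwhile sanity check, but nothing in your argument deviates from the paper's.
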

 %\begin{proof}
% By direct calculation, if $F$ is affine, then
% \[F_l(gh)=F(gh)-f(s(h))=F(g)+F(h)-f(s(g))-f(s(h))=F_l(g)+F_l(h).\]
 %Thus $F_l$ is multiplicative. The other direction follows from the same calculation.
 %\end{proof}
 \begin{Ex}\label{ex for affine functions}
 For any function $f\in C^\infty(M)$, we see that $s^* f-t^* f\in C^\infty(\mathcal{G})$ is a multiplicative function on $\mathcal{G}$ and $s^*f, t^*f$ are affine functions on $\mathcal{G}$.
  \end{Ex}
Consider the Lie groupoid
\[\Gamma: \oplus^q T\mathcal{G}\oplus^p T^*\mathcal{G}\rightrightarrows \oplus^q TM \oplus^p A^*.\]
A $(p,q)$-tensor $F\in \Gamma(\wedge^p T\mathcal{G}\otimes \wedge^q T^*\mathcal{G})$ on $\mathcal{G}$ can be viewed as a function on $\Gamma$.
\begin{Def}
A $(p,q)$-tensor  $F\in \Gamma(\wedge^p T\mathcal{G}\otimes \wedge^q T^*\mathcal{G})$ on a Lie groupoid $\mathcal{G}$ is called affine if it is an affine function on $\Gamma$.
\end{Def}

 The following proposition ensures the consistence of this definition with that in Definition \ref{aff vf} and \ref{aff form} for the cases of affine $k$-vector fields and affine $k$-forms.
\begin{Pro}
\begin{itemize}
\item[\rm(1)]
An affine $(p,0)$-tensor is an affine $p$-vector field  as defined in Definition \ref{aff vf};
\item[\rm(2)]  An affine $(0,q)$-tensor is an affine $q$-form as defined in Definition  \ref{aff form}.
\end{itemize}
\end{Pro}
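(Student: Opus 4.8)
The plan is to unfold the defining identity of an affine function on the groupoid $\Gamma$, namely $F(\gamma\cdot\gamma')=F(\gamma)+F(\gamma')-F(s(\gamma))$, in each of the two pure cases and match it term by term with the characterizations already in hand: Lemma \ref{affine vf} (equivalently Definition \ref{aff vf}) for $k$-vector fields and Definition \ref{aff form}, equation \eqref{affine form}, for $k$-forms. The common mechanism is that the source/target/unit of $\Gamma$ are assembled from the tangent and cotangent groupoid structures, so the three pieces $F(\gamma\cdot\gamma')$, $F(\gamma)+F(\gamma')$ and $F(s(\gamma))$ turn into pullbacks along $m$, the projections, and the base-restriction of the tensor.

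For part (2) I would argue directly. Viewing $\Theta$ as a function $F$ on $\oplus^q T\mathcal{G}\rightrightarrows\oplus^q TM$, a multiplicable pair over $(g,h)\in\mathcal{G}^{(2)}$ is a pair of $q$-tuples $(X^1,\dots,X^q),(Y^1,\dots,Y^q)$ with each $(X^i,Y^i)\in T\mathcal{G}^{(2)}$, i.e. precisely a tangent vector to $\mathcal{G}^{(2)}$ at $(g,h)$. Using $X^i\cdot Y^i=dm(X^i,Y^i)$ gives $F(\gamma\cdot\gamma')=(m^*\Theta)(\cdots)$, while $F(\gamma)=(\mathrm{pr}_1^*\Theta)(\cdots)$ and $F(\gamma')=(\mathrm{pr}_2^*\Theta)(\cdots)$. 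The only delicate term is the unit one: the source of $\Gamma$ sends $X^i$ to $s_*X^i\in TM\subset T\mathcal{G}|_M$, so $F(s(\gamma))$ is $\Theta$ evaluated on vectors tangent to $M$, which is $\iota^*\Theta=\theta$, and hence $F(s(\gamma))=(\mathrm{pr}_1^*s^*\theta)(\cdots)$. Thus the affine-function identity, read on every tangent vector to $\mathcal{G}^{(2)}$, is exactly \eqref{affine form}; since both sides are $q$-forms on $\mathcal{G}^{(2)}$, the pointwise identity is equivalent to the identity of forms, proving (2).

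For part (1) the cotangent product $\xi\cdot\eta$ of \eqref{multi} does not translate cleanly into the bisection translations $L_{\mathcal{X}},R_{\mathcal{Y}}$ of \eqref{affine}, so I would not unfold directly. Instead I would apply Lemma \ref{tt} to $\Gamma=\oplus^p T^*\mathcal{G}\rightrightarrows\oplus^p A^*$: the function $F$ attached to $\Pi$ is affine if and only if $F_r=F-t^*f$ is multiplicative, where $f=\iota^*F$ is the base-restriction. Two identifications finish the argument. First, restricting $F$ to $\oplus^p A^*$ pairs $\Pi|_M$ with covectors in $A^*=(TM)^0$, selecting the $\wedge^p A$-component, so $f$ is the function of $\pi=\mathrm{pr}_{\wedge^p A}\Pi|_M$. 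Second, by \eqref{s cot} the target on the cotangent groupoid is $t(\xi)=R_g^*\xi$, whence $(t^*f)(\xi^1,\dots,\xi^p)=\pi_{t(g)}(R_g^*\xi^1,\dots,R_g^*\xi^p)=\overrightarrow{\pi}(g)(\xi^1,\dots,\xi^p)$; thus $t^*f$ is the function of $\overrightarrow{\pi}$ and $F_r$ is the function of $\Pi_r=\Pi-\overrightarrow{\pi}$. Invoking the standard fact (the $(p,0)$-case of \cite{BD}, see also \cite{Xu}) that a $p$-vector field is multiplicative exactly when its associated function on $\oplus^p T^*\mathcal{G}$ is a multiplicative function, $F_r$ multiplicative is equivalent to $\Pi_r$ multiplicative, which by Proposition \ref{leftright} is equivalent to $\Pi$ being an affine $p$-vector field.

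I expect the main obstacle to be part (1): checking that the cotangent-groupoid multiplicativity of $F_r$ genuinely coincides with the classical multiplicativity of $\Pi_r$. The subtlety, already visible in Definition \ref{aff vf}, is that the affine-function identity ranges over all multiplicable pairs in $\oplus^p T^*\mathcal{G}$, a space of dimension $\dim\mathcal{G}+\dim M$ over each $(g,h)$, whereas the conormal space of $S$ splits into the family $(-\mu,L_{\mathcal{X}}^*\mu,R_{\mathcal{Y}}^*\mu,-L_{\mathcal{X}}^*R_{\mathcal{Y}}^*\mu)$, which yields condition (1) of Lemma \ref{affine vf}, and the two extra families $(-t^*\eta,t^*\eta,0,0)$ and $(-s^*\xi,s^*\xi,0,0)$, which yield condition (2); the cotangent product of \eqref{multi} carries an $M$-direction correction relative to $L_{\mathcal{X}},R_{\mathcal{Y}}$, so matching these by hand is awkward. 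Routing the whole discrepancy through Lemma \ref{tt} and Proposition \ref{leftright}, which already package both conditions, is precisely what avoids this bookkeeping; part (2), by contrast, should be routine once the unit term $F(s(\gamma))=\theta$ is correctly identified.
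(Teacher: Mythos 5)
Your proposal is correct and follows essentially the same route as the paper: part (2) by directly unfolding the affine-function identity on $\oplus^q T\mathcal{G}$ into equation \eqref{affine form}, and part (1) by combining Lemma \ref{tt} with the identification $t^*f=\overrightarrow{\pi}$ from \eqref{s cot}, the equivalence between multiplicative functions on $\oplus^p T^*\mathcal{G}$ and multiplicative $p$-vector fields from \cite{Xu}, and Proposition \ref{leftright}. Your explicit remark that pairing $\Pi|_M$ against $\wedge^p A^*=\wedge^p(TM)^0$ selects the $\wedge^p A$-component makes precise a step the paper leaves implicit, but the argument is the same.
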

\begin{proof}

Let $F\in \Gamma(\wedge^p T\mathcal{G})$ be an affine $(p,0)$-tensor. Namely,
\[F(\xi^1_g\cdot \eta^1_h,\cdots, \xi^p_g\cdot \eta^p_h)=F(\xi_g^1,\cdots,\xi_g^p)+F(\eta_h^1,\cdots,\eta_h^p)-F(s(\xi^1_g),\cdots,s(\xi^p_g)),\]
for $\xi_g^i\in T_g^* \mathcal{G}$ and $\eta_h^i\in T_h^*\mathcal{G}$ such that $s(\xi_g^i)=t(\eta_h^i)$, where $s,t$ are the source and target maps in the Lie groupoid $T^*\mathcal{G}\rightrightarrows A^*$.

Let $f=\mathrm{pr}_{\wedge^p A} F|_M$,  the projection of $F$ restricting on $M$ to $\wedge^k A$. We claim that
$F$ is an affine $(p,0)$-tensor if and only if $F-\overrightarrow{f}$ is a multiplicative $(p,0)$-tensor, that is
\[(F-\overrightarrow{f})(\xi^1_g\cdot \eta^1_h,\cdots, \xi^p_g\cdot \eta^p_h)=(F-\overrightarrow{f})(\xi_g^1,\cdots,\xi_g^p)+(F-\overrightarrow{f})(\eta_h^1,\cdots,\eta_h^p).\]
By (\ref{s cot}), we have $t^*f=\overrightarrow{f}$, where $t$ is the target map in $T^*\mathcal{G}\rightrightarrows A^*$.
the assertion holds by Lemma \ref{tt}.

By \cite[Proposition 2.7]{Xu}, $F-\overrightarrow{f}$ is a multiplicative function if and only if it is a multiplicative $p$-vector field on $\mathcal{G}$, which is further equivalent to that $F$ is an affine $p$-vector field by Proposition \ref{leftright}. So $F$ is an affine $(p,0)$-tensor if and only if $F$ is an affine $p$-vector field.

%From \cite{Xu28}, $(-\mu,L_{\mathcal{X}}^*\mu,L_{\mathcal{Y}}^*\mu,-L_{\mathcal{X}}^*R_{\mathcal{Y}}^*\mu)$ is conormal to $S$ at the point $(gh,h,g,s(h))$ for $\mu\in T_{gh}^*\mathcal{G}$. Thus, if $F$ is an affine $p$-vector field, we have
%\begin{eqnarray}\label{mu}
%F(\mu)=F(L_{\mathcal{X}}^*\mu)+F(R_{\mathcal{Y}}^*\mu)-F(L_{\mathcal{X}}^*R_{\mathcal{Y}}^*\mu).
%\end{eqnarray}
%Now let $\mu=\xi_g\cdot \eta_h$ such that $s(\xi_g)=t(\eta_h)$. By straightforward calculation, we get
%\[L_{\mathcal{X}}^*(\xi_g\cdot \eta_h)=(\mathcal{X}\circ t)^*\xi+\eta,\qquad R_{\mathcal{Y}}^*(\xi_g\cdot \eta_h)=\xi+(Y\circ (t\circ Y)^{-1}\circ s)^*\eta,\]
%and
%\[L_{\mathcal{X}}^*R_{\mathcal{Y}}^*(\xi_g\cdot \eta_h)\]

If $F\in \Gamma(\wedge^q T^*\mathcal{G})$ is an affine $(0,q)$-tensor, then
\begin{eqnarray*}\label{0q}
F(X\cdot Y)=F(X)+F(Y)-F(s(X)),\quad X\in \wedge^q T_g \mathcal{G}, Y\in \wedge^q T_h \mathcal{G}, (g,h)\in \mathcal{G}^{(2)}, s(X)=t(Y).
\end{eqnarray*}
%By definition, if we write $X=X_1\wedge\cdots\wedge X_q\in \wedge^q T_g \mathcal{G}$ and $Y=Y_1\wedge\cdots\wedge Y_q\in \wedge^q T_h \mathcal{G}$, then we have $X\cdot Y=Tm(X_1,Y_1)\wedge \cdots \wedge Tm(X_q,Y_q)$, where $Tm: T_g \mathcal{G}\times T_h \mathcal{G}\to T_{gh} \mathcal{G}$ is the differential of the multiplication on $\mathcal{G}$.
which implies that
\[m^* F=\mathrm{pr}_1^* F+\mathrm{pr}_2^* F-\mathrm{pr}_1^*s^*F.\]
So $F$ is an affine $q$-form as defined in Definition \ref{aff form}.
\end{proof}

Regarding to the relation between affine and multiplicative $(p,q)$-tensors, we also have the assertion as for affine $k$-vector fields and affine $k$-forms.

Let $f\in \Gamma(\wedge^p A\otimes \wedge^q T^*M)$. View it as a function on the base manifold of the Lie groupoid
\[\Gamma:  \oplus^q T\mathcal{G}\oplus^p T^*\mathcal{G}\rightrightarrows \oplus^q TM \oplus^p A^*.\]
By Example \ref{ex for affine functions},  $s_\Gamma^*f$ and $t_\Gamma^* f$ are affine functions on $\Gamma$ and hence affine $(p,q)$-tensors on $\mathcal{G}$, where $s_\Gamma,t_\Gamma$ are the source and target maps of the Lie groupoid $\Gamma$.

\begin{Lem}
Let $f\in \Gamma(\wedge^p A\otimes \wedge^q T^*M)$. Denote by $\overleftarrow{f}:=s_{\Gamma}^*f,\overrightarrow{f}:=t^*_\Gamma f\in \Gamma(\wedge^p T\mathcal{G}\otimes \wedge^q T^* \mathcal{G})$.
We have
\begin{eqnarray}\label{fleft}
\overleftarrow{f}(X_1,\cdots,X_q)=\overleftarrow{f(sX_1,\cdots, sX_q)},\qquad \overrightarrow{f}(X_1,\cdots,X_q)=\overrightarrow{f(tX_1,\cdots, tX_q)},
\end{eqnarray}
for $(X_1,\cdots, X_q)\in  T\mathcal{G}^{(q)}$.
\end{Lem}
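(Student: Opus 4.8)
The plan is to prove both identities fiberwise over each $g\in\mathcal{G}$ by pairing against covectors. After the $(p,q)$-tensors $\overrightarrow{f}$ and $\overleftarrow{f}$ are evaluated on the $q$ tangent vectors $X_1,\dots,X_q\in T_g\mathcal{G}$, both sides of each equation are elements of $\wedge^p T_g\mathcal{G}$. Hence it suffices to check that they have the same pairing with every decomposable $\xi_1\wedge\cdots\wedge\xi_p$, where $\xi_1,\dots,\xi_p\in T_g^*\mathcal{G}$, since such elements span $\wedge^p T_g^*\mathcal{G}$. Throughout I would unwind the definitions $\overleftarrow{f}=s_\Gamma^* f$ and $\overrightarrow{f}=t_\Gamma^* f$ together with the source and target maps of the groupoid $\Gamma$, which on the tangent factors are the tangent maps $s,t:T\mathcal{G}\to TM$ and on the cotangent factors are the source and target of the cotangent groupoid $T^*\mathcal{G}\rightrightarrows A^*$.

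For the right-translation identity, evaluating the pullback gives
\[\langle \overrightarrow{f}(X_1,\dots,X_q),\,\xi_1\wedge\cdots\wedge\xi_p\rangle=(t_\Gamma^* f)(X_1,\dots,X_q,\xi_1,\dots,\xi_p)=\langle f(tX_1,\dots,tX_q),\,t\xi_1\wedge\cdots\wedge t\xi_p\rangle,\]
where $t\xi_j\in A^*_{t(g)}$ is the target of $\xi_j$ in the cotangent groupoid. On the other hand, writing $\pi:=f(tX_1,\dots,tX_q)\in\wedge^p A_{t(g)}$ and using the definition $\overrightarrow{\pi}(g)=R_g(\pi)$, the pairing $\langle \overrightarrow{\pi}(g),\xi_1\wedge\cdots\wedge\xi_p\rangle$ is computed by transposing $R_g$ onto each covector. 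The defining relation of the cotangent target, $\langle t(\xi),v\rangle=\langle\xi,\overrightarrow{v}\rangle=\langle\xi,R_g(v)\rangle$ for $v\in A_{t(g)}$, says precisely that $t\xi_j$ is the restriction to $A_{t(g)}$ of the covector obtained by transposing $R_g$. Since $\pi\in\wedge^p A_{t(g)}$ only sees this $A$-restriction, the two pairings coincide, which yields the right-hand identity.

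The left-translation identity is proved in the same way, replacing $t_\Gamma$ by $s_\Gamma$ and right translation by left translation. Here the relevant defining relation is $\langle s(\xi),u\rangle=\langle\xi,\overleftarrow{u}\rangle$ for $u\in A_{s(g)}$, which identifies $s\xi$ with the restriction to $A_{s(g)}$ of the covector transported by $-L_g\circ\mathrm{inv}$. The one point demanding care — and the only genuine obstacle — is bookkeeping the inversion and the accompanying signs: the left translation $\overleftarrow{\pi}(g)=-L_g(\mathrm{inv}(\pi))$ contributes a factor $-L_g\circ\mathrm{inv}$ on each of the $p$ wedge factors, while the cotangent source carries exactly the matching adjoint on each factor, so the signs and the inversion cancel factor by factor and the two sides agree. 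Everything else is a routine transpose of the defining pairings of the tangent and cotangent groupoids, so no further difficulty arises.
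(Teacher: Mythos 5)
Your proof is correct and follows essentially the same route as the paper, whose entire proof is the one-line remark that the identities follow from the definition of the pullbacks and formula \eqref{s cot}; your argument is simply the detailed unwinding of that remark, pairing against decomposable covectors and transposing $R_g$ (resp.\ $-L_g\circ\mathrm{inv}$) factor by factor via the defining relations of the cotangent groupoid's target and source.
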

\begin{proof}
This follows from the definition and (\ref{s cot}).
\end{proof}
If assuming $f=u\otimes \beta$ for $u\in \Gamma(\wedge^p A)$ and $\beta\in \Omega^q(M)$, we get
\[\overleftarrow{f}=\overleftarrow{u}\otimes s^*\beta,\qquad \overrightarrow{f}=\overrightarrow{u}\otimes t^*\beta.\]

The following result is a direct consequence of Lemma \ref{tt}.

\begin{Pro}\label{affine-mult}
Let $F\in \Gamma(\wedge^p T\mathcal{G}\otimes \wedge^q T^* \mathcal{G})$ and $f=\mathrm{pr}_{\wedge^p A\otimes \wedge^q T^*M} F|_M$.
 Define
\[F_l=F-\overleftarrow{f},\qquad  F_r=F-\overrightarrow{f},\]
where $\overleftarrow{f}$ and $\overrightarrow{f}$ are defined in \eqref{fleft}.
Then $F$ is an affine $(p,q)$-tensor on $\mathcal{G}$ if and only if $F_l$ or $F_r$ is a multiplicative $(p,q)$-tensor.
\end{Pro}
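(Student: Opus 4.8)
The plan is to reduce Proposition~\ref{affine-mult} directly to Lemma~\ref{tt}, exactly as the statement advertises that it is ``a direct consequence.'' The key observation is that a $(p,q)$-tensor $F$ is, by definition, affine precisely when it is an affine \emph{function} on the auxiliary Lie groupoid $\Gamma:\oplus^q T\mathcal{G}\oplus^p T^*\mathcal{G}\rightrightarrows \oplus^q TM\oplus^p A^*$. So everything should be phrased in terms of affine functions on $\Gamma$, and then Lemma~\ref{tt} can be applied verbatim with $\mathcal{G}$ replaced by $\Gamma$.

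First I would identify the ingredients of Lemma~\ref{tt} in the setting of $\Gamma$. Writing $s_\Gamma,t_\Gamma$ for the source and target maps of $\Gamma$ and $\iota_\Gamma$ for the inclusion of its base $\oplus^q TM\oplus^p A^*$, the restriction $\iota_\Gamma^* F$ of the function $F$ to the base is exactly the component $f=\mathrm{pr}_{\wedge^p A\otimes\wedge^q T^*M} F|_M$; this is the content we must match against ``$f=\iota^*F$'' in Lemma~\ref{tt}. Next, by the preceding Lemma, we have $\overleftarrow{f}=s_\Gamma^* f$ and $\overrightarrow{f}=t_\Gamma^* f$ as functions on $\Gamma$. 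Therefore the two tensors defined in the statement are literally
\[
F_l = F - \overleftarrow{f} = F - s_\Gamma^* f, \qquad
F_r = F - \overrightarrow{f} = F - t_\Gamma^* f,
\]
which are exactly the functions called $F_l$ and $F_r$ in Lemma~\ref{tt} applied to the groupoid $\Gamma$.

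Now the conclusion follows: by Lemma~\ref{tt} (applied with $\Gamma$ in place of $\mathcal{G}$), the function $F$ is affine on $\Gamma$ if and only if $F_l=F-s_\Gamma^* f$ or $F_r=F-t_\Gamma^* f$ is a multiplicative function on $\Gamma$. By the definition of affine $(p,q)$-tensors, $F$ being an affine function on $\Gamma$ is the same as $F$ being an affine $(p,q)$-tensor on $\mathcal{G}$; and a multiplicative function on $\Gamma$ whose restriction to the base vanishes in the relevant component is precisely a multiplicative $(p,q)$-tensor, so $F_l$ (resp. $F_r$) being a multiplicative function on $\Gamma$ is the same as $F_l$ (resp. $F_r$) being a multiplicative $(p,q)$-tensor on $\mathcal{G}$. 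Stringing these equivalences together yields the claim.

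The only genuine point to verify---and the step I expect to require the most care---is the dictionary between ``multiplicative function on $\Gamma$'' and ``multiplicative $(p,q)$-tensor,'' i.e.\ that the multiplicativity notion for $(p,q)$-tensors from \cite{BD} coincides with multiplicativity of the associated function on $\Gamma$ in the sense of the $0$-form definition. This is essentially the motivating definition quoted at the start of the section, but one should check that the vanishing condition $F_l|_{\text{base}}=0$ matches the condition that the $\wedge^p A\otimes\wedge^q T^*M$-component of $F_l|_M$ vanishes; this is immediate from $\iota_\Gamma^* F_l = f - f = 0$. Everything else is a transcription of Lemma~\ref{tt}, so no new computation is needed.
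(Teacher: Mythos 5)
Your proof is correct and follows exactly the route the paper intends: the paper simply states the proposition as ``a direct consequence of Lemma \ref{tt},'' and your argument is the honest unpacking of that reduction, applying Lemma \ref{tt} to the groupoid $\Gamma$ and checking that $\iota_\Gamma^* F$ coincides with the component $f=\mathrm{pr}_{\wedge^p A\otimes\wedge^q T^*M} F|_M$. Nothing is missing.
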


Denote by $T^{p,q}_{\mathrm{aff}} (\mathcal{G})$ and $T^{p,q}_{\mathrm{mult}} (\mathcal{G})$ the spaces of affine and multiplicative $(p,q)$-tensors on $\mathcal{G}$ respectively. It is immediate that $T^{p,q}_{\aff}(\mathcal{G})$ is a vector space with $T^{p,q}_{\mathrm{mult}}(\mathcal{G})$ being a linear subspace.
\begin{Thm}\label{Ver1}
With the above notations, we have a $2$-vector space
\[T^{p,q}_{\mathrm{aff}} (\mathcal{G})\rightrightarrows T^{p,q}_{\mathrm{mult}} (\mathcal{G}),\]
where the source and target maps of the groupoid structure are
\[s(F)=F_r,\qquad t(F)=F_l,\qquad \forall F\in T^{p,q}_{\mathrm{aff}} (\mathcal{G})\]
and for a pair $F_1,F_2\in T^{p,q}_{\mathrm{aff}} (\mathcal{G})$ such that $(F_1)_r=(F_2)_l$, the multiplication is
\[F_1* F_2=F_1+\overleftarrow{f_2},\qquad f_2=\mathrm{pr}_{\wedge^p A\otimes \wedge^q T^*M} F_2|_M.\]
\end{Thm}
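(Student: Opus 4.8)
The plan is to follow the template already established in the proofs of Theorem \ref{2-group for vf} and Theorem \ref{2-group for form}, since the statement asserts exactly the analogous $2$-vector space structure for $(p,q)$-tensors. The essential inputs are Proposition \ref{affine-mult} (which tells us $F$ is affine iff $F_l$ and $F_r$ are multiplicative) and the identities \eqref{fleft} describing the left and right translations of $f \in \Gamma(\wedge^p A \otimes \wedge^q T^*M)$. First I would check that the multiplication $F_1 * F_2 = F_1 + \overleftarrow{f_2}$ is well-defined, i.e. that it lands in $T^{p,q}_{\mathrm{aff}}(\mathcal{G})$: since $F_1$ is affine and $\overleftarrow{f_2} = s_\Gamma^* f_2$ is affine (being the source-pullback of a base function, by Example \ref{ex for affine functions} applied to the groupoid $\Gamma$), their sum is affine because affine tensors form a vector space.

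Next I would verify the groupoid axioms. The key computational fact, mirroring the vector-field case, is that the restriction of $\overleftarrow{f_2}$ to $M$ has $\wedge^p A \otimes \wedge^q T^*M$-component equal to $f_2$; this follows from \eqref{fleft} since on $M$ the source map is the identity, so $\mathrm{pr}_{\wedge^p A \otimes \wedge^q T^*M}\overleftarrow{f_2}|_M = f_2$. Using the composability hypothesis $(F_1)_r = (F_2)_l$, which unwinds to $F_1 - \overrightarrow{f_1} = F_2 - \overleftarrow{f_2}$, I would then compute
\[
s(F_1 * F_2) = (F_1 + \overleftarrow{f_2}) - \overrightarrow{f_1 + f_2} = F_2 - \overrightarrow{f_2} = (F_2)_r = s(F_2),
\]
and
\[
t(F_1 * F_2) = (F_1 + \overleftarrow{f_2}) - \overleftarrow{f_1 + f_2} = F_1 - \overleftarrow{f_1} = (F_1)_l = t(F_1),
\]
where I use that $\mathrm{pr}_{\wedge^p A \otimes \wedge^q T^*M}(F_1 * F_2)|_M = f_1 + f_2$ and the additivity $\overleftarrow{f_1 + f_2} = \overleftarrow{f_1} + \overleftarrow{f_2}$, $\overrightarrow{f_1 + f_2} = \overrightarrow{f_1} + \overrightarrow{f_2}$ coming from the linearity of \eqref{fleft}. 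Associativity is immediate: for a third composable $F_3$, both $(F_1 * F_2) * F_3$ and $F_1 * (F_2 * F_3)$ equal $F_1 + \overleftarrow{f_2} + \overleftarrow{f_3}$. Identities are the multiplicative tensors themselves, and inverses are given by $F \mapsto F - (\overrightarrow{f} + \overleftarrow{f})$, exactly as in \eqref{Pi inverse}.

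Finally I would observe that all structure maps---source, target, multiplication, and the identity-assigning inclusion $T^{p,q}_{\mathrm{mult}}(\mathcal{G}) \hookrightarrow T^{p,q}_{\mathrm{aff}}(\mathcal{G})$---are linear, since $F \mapsto \overrightarrow{f}$ and $F \mapsto \overleftarrow{f}$ are linear in $F$ (the projection to the $\wedge^p A \otimes \wedge^q T^*M$-component and the translations \eqref{fleft} being linear operations), which upgrades the groupoid to a $2$-vector space in the sense of the preliminaries. I expect no genuine obstacle here; the one point requiring slight care, and the only place where the $(p,q)$ case is not a verbatim copy of the $k$-vector field case, is confirming that the component identity $\mathrm{pr}_{\wedge^p A \otimes \wedge^q T^*M}\overleftarrow{f}|_M = f$ holds with the correct sign and without the curvature-type correction term $\rho(\cdot)$ that appeared for multivector fields in Theorem \ref{2-group for vf}; the tensor translations in \eqref{fleft} are defined directly via $s_\Gamma^*$ and $t_\Gamma^*$ on the groupoid $\Gamma$, so this identity should hold cleanly. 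For this reason the proof can legitimately be stated as "similar to that of Theorem \ref{2-group for vf} and Theorem \ref{2-group for form}," with the verification of the source and target computations above being the substantive content.
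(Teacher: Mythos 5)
Your proposal is correct and follows exactly the route the paper takes: the paper's proof of this theorem is literally ``similar to that for Theorem \ref{2-group for vf},'' and your argument is that template carried out in detail, with the key component identity $\mathrm{pr}_{\wedge^p A\otimes\wedge^q T^*M}\overleftarrow{f}|_M=f$ correctly justified via $\overleftarrow{f}=s_\Gamma^*f$ restricted to the units of $\Gamma$. The source/target/associativity computations and the linearity check all match the paper's intended argument.
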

\begin{proof}
The proof is similar to that for Theorem \ref{2-group for vf}. \end{proof}
\begin{Cor}
The $2$-term complex of vector spaces of the above $2$-vector space is
\[\Gamma(\wedge^p A\otimes \wedge^q T^*M)\to T^{p,q}_{\mathrm{mult}} (\mathcal{G}),\qquad f\mapsto \overrightarrow{f}-\overleftarrow{f}.\]
\end{Cor}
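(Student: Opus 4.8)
The plan is to read off the associated chain complex directly from the general dictionary between $2$-vector spaces and $2$-term complexes recalled in Section~2.1: for a $2$-vector space $V_1\rightrightarrows V_0$ the associated complex is $t\colon \ker s\to V_0$. Here $V_1=T^{p,q}_{\mathrm{aff}}(\mathcal{G})$, $V_0=T^{p,q}_{\mathrm{mult}}(\mathcal{G})$, and by Theorem~\ref{Ver1} the source and target are $s(F)=F_r=F-\overrightarrow{f}$ and $t(F)=F_l=F-\overleftarrow{f}$, where $f=\mathrm{pr}_{\wedge^p A\otimes \wedge^q T^*M} F|_M$. So the whole corollary reduces to two tasks: identify $\ker s$ with $\Gamma(\wedge^p A\otimes \wedge^q T^*M)$, and compute the restriction of $t$ to $\ker s$.

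First I would determine $\ker s$. The equation $s(F)=F-\overrightarrow{f}=0$ says precisely $F=\overrightarrow{f}$, so every element of $\ker s$ is a right translation. Conversely, given any $f\in \Gamma(\wedge^p A\otimes \wedge^q T^*M)$, the tensor $\overrightarrow{f}=t_\Gamma^* f$ is affine (as noted just before the lemma computing $\overleftarrow{f},\overrightarrow{f}$), so the assignment $f\mapsto \overrightarrow{f}$ lands in $T^{p,q}_{\mathrm{aff}}(\mathcal{G})$. The map is manifestly linear and injective, and surjectivity onto $\ker s$ follows once I check that it actually lands there, i.e.\ that $(\overrightarrow{f})_r=0$. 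This yields the isomorphism $\Gamma(\wedge^p A\otimes \wedge^q T^*M)\cong \ker s$.

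The key point to verify carefully --- and the only non-formal step --- is that
\[\mathrm{pr}_{\wedge^p A\otimes \wedge^q T^*M}\,\overrightarrow{f}\big|_M=f,\]
so that the $\wedge^p A\otimes \wedge^q T^*M$-component feeding into $s$ and $t$ recovers $f$ itself. This is the tensor analogue of the computation $\mathrm{pr}_{\wedge^k A}\overleftarrow{\pi'}|_M=\pi'$ carried out in the proof of Theorem~\ref{2-group for vf}; it follows from \eqref{fleft} together with the fact that along $M$ the target map $t_\Gamma$ restricts to the identity on the base $\oplus^q TM\oplus^p A^*$, so that right translation does not alter the restriction to $M$. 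Granting this, $(\overrightarrow{f})_r=\overrightarrow{f}-\overrightarrow{f}=0$, confirming $\overrightarrow{f}\in\ker s$.

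Finally I would compute the differential. Under the identification above, it is $t|_{\ker s}\colon \overrightarrow{f}\mapsto t(\overrightarrow{f})=(\overrightarrow{f})_l=\overrightarrow{f}-\overleftarrow{f}$, where the second equality again uses the restriction formula. Transporting back along $f\mapsto \overrightarrow{f}$, the differential becomes exactly $f\mapsto \overrightarrow{f}-\overleftarrow{f}$, as claimed. I do not expect a genuine obstacle here; everything is routine bookkeeping on top of Theorem~\ref{Ver1} and the restriction identity, the latter being the single computation worth spelling out.
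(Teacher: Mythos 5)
Your proposal is correct and follows exactly the route the paper intends: the corollary is an unwinding of the general dictionary $V_1\rightrightarrows V_0\;\rightsquigarrow\;(t\colon\ker s\to V_0)$ from Section 2.1 applied to Theorem \ref{Ver1}, with the only substantive check being the restriction identity $\mathrm{pr}_{\wedge^p A\otimes\wedge^q T^*M}\,\overrightarrow{f}|_M=f$, which is the tensor analogue of the computation $\mathrm{pr}_{\wedge^k A}\overleftarrow{\pi'}|_M=\pi'$ in the proof of Theorem \ref{2-group for vf}. You correctly isolate that identity as the one step worth spelling out (note only that at a unit $1_x$ the target map sends $X_{TM}+X_A$ to $X_{TM}+\rho(X_A)$, and it is the projection onto $\wedge^p A\otimes\wedge^q T^*M$ that discards the $\rho$-terms, exactly as in the paper's $\pi'-\rho(\pi')$ computation).
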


\subsection{The strict monoidal category of affine $(1,1)$-tensors}
Another important case is affine $(1,1)$-tensors, which can be used to define affine Nijenhuis operators on a Lie groupoid.
On the space of affine $(1,1)$-tensors, in addition to the $2$-vector space structure proposed in Theorem \ref{Ver1}, we shall also construct a strict monoidal category structure in this subsection.

A $(1,1)$-tensor on a Lie groupoid $\mathcal{G}$ is {\bf multiplicative} if the induced bundle map $T\mathcal{G}\to T\mathcal{G}$ is a Lie groupoid morphism \cite{XuC}, which amounts to say that the corresponding function on  $T^*\mathcal{G}\oplus T\mathcal{G}\rightrightarrows A^*\oplus TM$ is multiplicative by \cite[Proposition 3.9]{BD}. Thus the composition of two multiplicative $(1,1)$-tensors is still a multiplicative $(1,1)$-tensor. We shall show that the composition of two affine $(1,1)$-tensors is also an affine $(1,1)$-tensor.

By  (\ref{affine function}),
a $(1,1)$-tensor $N\in \Gamma(T\mathcal{G}\otimes T^*\mathcal{G})$ is affine if it satisfies that
\[N(X\cdot Y,\xi\cdot \eta)=N(X,\xi)+N(Y,\eta)-n(s_{T\mathcal{G}}X,s_{T^*\mathcal{G}}\xi),\qquad n=\mathrm{pr}_{T^*M\otimes A} N|_M\]
where $(X,Y)\in T\mathcal{G}^{(2)}$ and $(\xi,\eta)\in T^*\mathcal{G}^{(2)}$ are multiplicable pairs in $T\mathcal{G}$ and $T^*\mathcal{G}$  covering the same pair $(g,h)\in \mathcal{G}^{(2)}$. Here $s_{T\mathcal{G}}X$ and $s_{T^*\mathcal{G}}$ are the source maps of the tangent and cotangent groupoids respectively.

For a multiplicative $(1,1)$-tensor $N$, it corresponds to a Lie groupoid morphism $(N,n_{TM}):T\mathcal{G}\to T\mathcal{G}$, where $n_{TM}:TM\to TM$ is the map on the base manifold. Since $N$ preserves the $s$-fiber, it induces a bundle map $n_A:A\to A$. Then we have $N|_M=n_{TM}+n_A$.  For an affine $(1,1)$-tensor $N$,  from the difference of affine and multiplicative $(1,1)$-tensors, we have that the restriction of $N$ on $M$ is
\[N|_M=\begin{pmatrix} n_{TM}&  0 \\ n & n_A\end{pmatrix}: TM\oplus A\to TM\oplus A,\]
where $n_{TM}:TM\to TM$ and $n_A:A\to A$ and $n=\mathrm{pr}_{T^*M\otimes A} N|_M$.
\begin{Lem}\label{Hor1}
The composition $N\circ N'$ of two affine $(1,1)$-tensors $N$ and $N'$ is still an affine $(1,1)$-tensor with \[(N\circ N')_l=N_l\circ N'_l,\qquad (N\circ  N')_r=N_r\circ N'_r,\qquad \forall N,N'\in T_{\aff}^{1,1}(\mathcal{G}).\]

Moreover, the  $T^*M\otimes A$-component of $N\circ N'|_M$ is
\[\mathrm{pr}_{T^*M \otimes A} N\circ N'|_M=n_A\circ n'+n\circ n'_{TM}+n\circ \rho\circ n',\]
where
\[N|_M=\begin{pmatrix} n_{TM}&  0 \\ n & n_A\end{pmatrix},\quad N'|_M=\begin{pmatrix} n'_{TM}&  0 \\ n' & n'_A\end{pmatrix}: TM\oplus A\to TM\oplus A\]
are the decompositions of $N$ and $N'$ restricting on $M$ and $\rho:A\to TM$ is the anchor map.
\end{Lem}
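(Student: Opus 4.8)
The plan is to reduce the composition statement to the already-established correspondence between affine and multiplicative tensors in Proposition \ref{affine-mult}, combined with the remark just before the Lemma that multiplicative $(1,1)$-tensors are closed under composition. The key observation is that $N_l$ and $N'_l$ are \emph{multiplicative} $(1,1)$-tensors, so their composition $N_l\circ N'_l$ is multiplicative, being a composite of Lie groupoid morphisms $T\mathcal{G}\to T\mathcal{G}$. If I can show that $N\circ N'$ and $N_l\circ N'_l$ differ exactly by $\overleftarrow{f}$ for the appropriate $f=\mathrm{pr}_{\wedge A\otimes T^*M}(N\circ N')|_M$, then $(N\circ N')_l=N_l\circ N'_l$ is multiplicative, and Proposition \ref{affine-mult} immediately yields that $N\circ N'$ is affine. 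The same argument with right translations handles the $r$-case.

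First I would write $N=N_l+\overleftarrow{f}$ and $N'=N'_l+\overleftarrow{f'}$, where $f=\mathrm{pr}_{A\otimes T^*M}N|_M=n$ and $f'=n'$ under the matrix decompositions given. Then I would expand the composition
\begin{eqnarray*}
N\circ N'=(N_l+\overleftarrow{f})\circ(N'_l+\overleftarrow{f'})=N_l\circ N'_l+N_l\circ \overleftarrow{f'}+\overleftarrow{f}\circ N'_l+\overleftarrow{f}\circ \overleftarrow{f'}.
\end{eqnarray*}
The heart of the computation is to verify that the three correction terms are jointly a left-translated tensor of the form $\overleftarrow{(\cdots)}$, i.e.\ that they lie in the image of the left-translation map and contribute nothing multiplicative. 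For this I would use that a multiplicative $(1,1)$-tensor composed with a left-translation $\overleftarrow{f'}$ stays left-invariant (since $N_l$ is a groupoid morphism preserving the left-invariant directions, in analogy with the identity $[\Pi_1,\overleftarrow{\pi'_2}]$ being left-invariant used in the proof of Theorem \ref{Lie 2}), and similarly for $\overleftarrow{f}\circ N'_l$ and $\overleftarrow{f}\circ\overleftarrow{f'}$.

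The main obstacle is the bookkeeping for the $T^*M\otimes A$-component of $N\circ N'|_M$, which is where the anchor $\rho$ enters. Restricting to $M$, I must compose the two block-lower-triangular matrices and extract the off-diagonal block; naively this gives $n_A\circ n'+n\circ n'_{TM}$, but the correct formula contains the extra term $n\circ\rho\circ n'$. This term arises precisely because left-translation of a bundle map on $A$ interacts with the tangent groupoid structure through the anchor: the composite $\overleftarrow{f}\circ\overleftarrow{f'}$, when restricted to $M$, does not simply give $n\circ n'$ in the $A$-direction but picks up a contribution from how $\overleftarrow{f'}$ maps into $T\mathcal{G}$ and is then seen by $\overleftarrow{f}$ through the anchor, reflecting the identity $-\mathrm{inv}(X)=X-\rho(X)$ used in Theorem \ref{2-group for vf}. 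I would pin this down by evaluating $N\circ N'$ on appropriate vectors and covectors along $M$, tracking the $s$-fiber and the image under the anchor carefully, and then reading off the three summands $n_A\circ n'$, $n\circ n'_{TM}$, and $n\circ\rho\circ n'$. Once this component computation is correct, the closure statements $(N\circ N')_l=N_l\circ N'_l$ and $(N\circ N')_r=N_r\circ N'_r$ follow formally, completing the proof.
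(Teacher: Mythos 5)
Your proposal is correct and follows essentially the same route as the paper: the paper decomposes $N=N_r+\overrightarrow{n}$, $N'=N'_r+\overrightarrow{n'}$ (right translations rather than your left ones), expands the composition into four terms, and shows each cross term is again a translation, namely $N_r\circ\overrightarrow{n'}=\overrightarrow{n_A\circ n'}$, $\overrightarrow{n}\circ N'_r=\overrightarrow{n\circ n'_{TM}}$, and $\overrightarrow{n}\circ\overrightarrow{n'}=\overrightarrow{n\circ\rho\circ n'}$, the last being exactly the anchor contribution you anticipated (via $t(\overrightarrow{u})=\rho(u)$), after which Proposition \ref{affine-mult} and closure of multiplicative $(1,1)$-tensors under composition finish the argument. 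The only caution is that in your left-handed version the bookkeeping shifts slightly (e.g.\ $s(\overleftarrow{u})=-\rho(u)$ and the base map of $N'_l$ absorbs a $\rho\circ n'$), so the anchor term does not sit in $\overleftarrow{f}\circ\overleftarrow{f'}$ quite as you describe, but this does not affect the soundness of the plan.
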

\begin{proof}
Write $N=N_r+\overrightarrow{n}$ and $N'=N'_r+\overrightarrow{n'}$, where $n$ and $n'$ are the $T^*M\otimes A$-components of  $N|_M$ and $N'|_M$ respectively and $\overrightarrow{n},\overrightarrow{n'}$ are defined in (\ref{fleft}). Then
\[N\circ N'=N_r\circ N'_r+N_r\circ \overrightarrow{n'}+\overrightarrow{n}\circ N'_r+\overrightarrow{n}\circ \overrightarrow{n'}.\]
By Proposition \ref{affine-mult}, $(N_r,n_{TM}),(N'_r,n'_{TM}): T\mathcal{G}\to T\mathcal{G}$ are morphisms of Lie groupoids. So we have the formulas $N_r(\overrightarrow{u})=\overrightarrow{n_A(u)}, \forall u\in \Gamma(A)$ and $t\circ N'_r=n'_{TM}\circ t$.
Applying on $X\in T_g \mathcal{G}$, we get
\[ N_r\circ \overrightarrow{n'}(X)=N_r\overrightarrow{n'(tX)}=\overrightarrow{n_A(n'(tX))}=\overrightarrow{n_A\circ n'}(X),\]
\[ \overrightarrow{n}\circ N'_r(X)= \overrightarrow{n(tN'_r(X))}=\overrightarrow{n(n'_{TM}(tX))}=\overrightarrow{n\circ n'_{TM}}(X),\]
and
\[\overrightarrow{n}\circ \overrightarrow{n'}(X)=\overrightarrow{n}\overrightarrow{n'(tX)}=\overrightarrow{n(\rho(n'(tX)))}=\overrightarrow{n\circ \rho\circ n'}(X).\]
Thus we proved that
\[N\circ N'=N_r\circ N'_r+\overrightarrow{n_A\circ n'+n\circ n'_{TM}+n\circ \rho\circ n'}.\]
Notice that the composition $N_r\circ N'_r$ of two multiplicative $(1,1)$-tensors is still multiplicative. Then by Proposition \ref{affine-mult}, we obtain that  $N\circ N'$ is an affine $(1,1)$-tensor with the properties as desired.
\end{proof}

Actually, the $2$-vector space $T^{(1,1)}_{\mathrm{aff}}(\mathcal{G})\rightrightarrows T^{(1,1)}_{\mathrm{mult}}(\mathcal{G})$ from Theorem \ref{Ver1} with the composition is a strict monoidal category.

\begin{Thm}
We have a strict monoidal category structure on the $2$-vector space
\[T^{(1,1)}_{\mathrm{aff}}(\mathcal{G})\rightrightarrows T^{(1,1)}_{\mathrm{mult}}(\mathcal{G}),\]
with the product being the composition of two affine $(1,1)$-tensors and the unit given by the identity $I:T\mathcal{G}\to T\mathcal{G}$.
%where
%\begin{itemize}
%\item{} the source, target maps are $s(N)=N_r, t(N)=N_l,\forall N\in T^{(1,1)}_{\mathrm{aff}}(\mathcal{G})$ and the multiplication is
%\[N_1* N_2=N_1+\overleftarrow{n_2},\qquad n_2=\mathrm{pr}_{T^*M\otimes A} N_2|_M,\]
%for two affine $(1,1)$-tensors $N_1,N_2$  satisfying $(N_1)_r=(N_2)_l$;
%\item{} the product is the composition of two affine $(1,1)$-tensors.
%\end{itemize}
\end{Thm}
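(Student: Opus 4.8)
The plan is to verify the two strict monoidal category axioms—strict associativity of the product and the strict two-sided unit—separately on objects and on arrows, while freely using Lemma~\ref{Hor1}, which already establishes that the composition $N\circ N'$ of two affine $(1,1)$-tensors is again affine and computes how the source/target maps and the $T^*M\otimes A$-component behave. First I would observe that the composition $\circ$ must be exhibited as a \emph{bifunctor} on the groupoid $T^{(1,1)}_{\mathrm{aff}}(\mathcal{G})\rightrightarrows T^{(1,1)}_{\mathrm{mult}}(\mathcal{G})$, which by \eqref{bifunctor} means checking $1_N\circ 1_{N'}=1_{N\circ N'}$ (compatibility of $\circ$ with identity arrows) and the interchange law $(M_1\circ M_2)\ast(N_1\circ N_2)=(M_1\ast N_1)\circ(M_2\ast N_2)$ whenever the groupoid compositions $\ast$ are defined. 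The identity-compatibility is immediate since the identity arrow on a multiplicative $(1,1)$-tensor $N_r$ is just $N_r$ itself viewed as an affine tensor, and Lemma~\ref{Hor1} gives $(N\circ N')_r=N_r\circ N'_r$, so the $\ast$-source and $\ast$-target of $N\circ N'$ match $N_r\circ N'_r$ and $N_l\circ N'_l$ as required for $\circ$ to respect the groupoid structure.

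The heart of the bifunctoriality is the interchange law, and I expect this to be the main obstacle. Unwinding the definition of $\ast$ from Theorem~\ref{Ver1}, an $\ast$-composable pair $F_1\ast F_2=F_1+\overleftarrow{f_2}$ adds a left-translated term governed by the $T^*M\otimes A$-component $f_2$. So the interchange law becomes a bilinear identity in these components: writing $M_i\circ N_i$ via the explicit formula
\[
\mathrm{pr}_{T^*M\otimes A}(N\circ N')|_M=n_A\circ n'+n\circ n'_{TM}+n\circ\rho\circ n'
\]
from Lemma~\ref{Hor1}, I would expand both sides and check that the left-translation operator $\overleftarrow{\,\cdot\,}$ is compatible with composition of affine tensors. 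The key technical point is that, by \eqref{fleft} and the fact that the multiplicative parts $(N_r,n_{TM})$ are genuine Lie groupoid morphisms, one has identities such as $N_r\circ\overleftarrow{n'}=\overleftarrow{n_A\circ n'}$ mirroring the right-translation computation already done in the proof of Lemma~\ref{Hor1} (using $s\circ N_r=n_{TM}\circ s$ and $N_r(\overleftarrow{u})=\overleftarrow{n_A(u)}$). These let me push all $\overleftarrow{\,\cdot\,}$-terms to a common form and confirm that composing-then-$\ast$-multiplying equals $\ast$-multiplying-then-composing.

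Once $\circ$ is a bifunctor, the remaining axioms are essentially inherited from the underlying associative composition of bundle maps $T\mathcal{G}\to T\mathcal{G}$. Strict associativity $(N\circ N')\circ N''=N\circ(N'\circ N'')$ holds on objects and arrows because composition of the endomorphisms of $T\mathcal{G}$ is strictly associative, and Lemma~\ref{Hor1} guarantees each intermediate composite stays inside $T^{(1,1)}_{\mathrm{aff}}(\mathcal{G})$; I would remark that the associativity of the $T^*M\otimes A$-component formula is a direct (if slightly tedious) consequence of the associativity of $n\mapsto n_A\circ n+n\circ n'_{TM}+n\circ\rho\circ n'$. For the unit, the identity $(1,1)$-tensor $I:T\mathcal{G}\to T\mathcal{G}$ is multiplicative (hence affine), satisfies $I|_M=\mathrm{id}$ so its $T^*M\otimes A$-component vanishes, and $I\circ N=N=N\circ I$ as bundle maps; thus $I$ is a strict two-sided unit for $\circ$ both on objects and on arrows. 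Assembling these verifications establishes the strict monoidal category structure.
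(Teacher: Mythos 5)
Your proposal is correct and follows essentially the same route as the paper: verify that $I$ is a strict two-sided unit, reduce the remaining work to bifunctoriality of $\circ$, and establish the interchange law by expanding $(N_1*N_3)\circ(N_2*N_4)$ and pushing the left-translated terms through using the left-translation analogues of the computations in Lemma~\ref{Hor1} (e.g.\ $N_1\circ\overleftarrow{n_4}=\overleftarrow{{n_3}_A\circ n_4}$ via $(N_1)_r=(N_3)_l$ and $\overrightarrow{n_1}\circ\overleftarrow{n_4}=0$). The paper's proof is exactly this argument, so no further comparison is needed.
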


\begin{proof}
It is obvious that the identity $I:T\mathcal{G}\to T\mathcal{G}$, as a multiplicative $(1,1)$-tensor, is a left and right unit for the composition.

It suffices to verify that the composition $\circ: T_{\aff}^{1,1}(\mathcal{G})\times T_{\aff}^{1,1}(\mathcal{G})\to T_{\aff}^{1,1}(\mathcal{G})$ is a bifunctor.
 Let $N_1, N_2, N_3$ and $ N_4$ be four affine $(1,1)$-tensors such that $(N_1)_r=(N_3)_l$ and $(N_2)_r=(N_4)_l$. That is $N_1-\overrightarrow{n_1}=N_3-\overleftarrow{n_3}$ and $N_2-\overrightarrow{n_2}=N_4-\overleftarrow{n_4}$.

By Lemma \ref{Hor1}, we see $(N_1\circ N_2)_r=(N_3\circ N_4)_l$. Then we prove
\begin{eqnarray}\label{eqc}
(N_1 * N_3)\circ(N_2 * N_4)=(N_1 \circ N_2)*(N_3\circ N_4).
\end{eqnarray}
The left hand side equals to
\[(N_1+\overleftarrow{n_3})\circ (N_2+\overleftarrow{n_4})=N_1\circ N_2+N_1\circ \overleftarrow{n_4}+ \overleftarrow{n_3}\circ N_2+\overleftarrow{n_3}\circ \overleftarrow{n_4},\]
and by Lemma \ref{Hor1}, the right hand side amounts to
\[N_1\circ N_2+\overleftarrow{ ({n_3}_{A}\circ n_4+n_3\circ {n_4}_{TM}+n_3\circ \rho\circ n_4)}.\]
By the same calculation in Lemma \ref{Hor1} for the left translation instead of the right translation, we get
\[N_1\circ \overleftarrow{n_4}=(N_3-\overleftarrow{n_3}+\overrightarrow{n_1})\circ \overleftarrow{n_4}=\overleftarrow{{n_3}_A\circ n_4}+\overrightarrow{n_1}\circ \overleftarrow{n_4}=\overleftarrow{{n_3}_A\circ n_4},\]
which follows from
\[\overrightarrow{n_1}\circ \overleftarrow{n_4}(X)=\overrightarrow{n_1}(\overleftarrow{n_4(sX)}=\overrightarrow{n_1(t\overleftarrow{n_4(sX))}}=0,\qquad X\in T_g \mathcal{G}.\] Similarly, we have
\[\overleftarrow{n_3}\circ N_2=\overleftarrow{n_3\circ {n_4}_{TM}}.\]
Observe that $\overleftarrow{n_3}\circ \overleftarrow{n_4}=\overleftarrow{n_3\circ \rho\circ n_4}$ for the reason as for the right translation proved in Lemma \ref{Hor1}.
This proves (\ref{eqc}).
\end{proof}
\begin{Rm}
This strict monoidal category from affine $(1,1)$-tensors is related to the $2$-vector spaces constructed from affine $2$-vector fields and $2$-forms as in Theorem \ref{2-group for vf} and \ref{2-group for form} if we take into consideration of the generalized tangent bundle $T\mathcal{G}\oplus T^*\mathcal{G}\rightrightarrows TM\oplus A^*$. Identify an affine $2$-vector field $\Pi\in \mathfrak{X}^k(\mathcal{G})$ with a matrix
\[
\left(
\begin{array}{cc}
I& \Pi \\
0 & I
\end{array}
\right):T\mathcal{G}\oplus T^*\mathcal{G}\to T\mathcal{G}\oplus T^*\mathcal{G}.\]
Then the addition in the vector space $\mathfrak{X}^2_{\aff}(\mathcal{G})$ is actually the composition of two affine $2$-vector fields as matrices. Likewise, seeing an affine $2$-form $\Theta\in \Omega^2(\mathcal{G})$ as a matrix \[
\left(
\begin{array}{cc}
I& 0 \\
\Theta & I
\end{array}
\right):T\mathcal{G}\oplus T^*\mathcal{G}\to T\mathcal{G}\oplus T^*\mathcal{G},\]
we get that the addition in $\Omega^2_{\aff}(\mathcal{G})$ is the composition of  two affine $2$-forms as  matrices.
\end{Rm}

\begin{Rm}
As multiplicative $(1,1)$-tensors can be characterized as a Lie groupoid morphism from $T\mathcal{G}$ to $T\mathcal{G}$, a
multiplicative $p$-vector field defines a morphism of Lie groupoids from $\oplus^{p-1} T^*\mathcal{G}$ to $T\mathcal{G}$ and a multiplicative $p$-form defines a morphism of Lie groupoids from $\oplus^{p-1} T\mathcal{G}$ to $T^* \mathcal{G}$. See for example \cite{KS}.
In these cases, viewing multiplicative structures as functors, we see that affine structures are actually natural transformations between these multiplicative structures.
\end{Rm}

\begin{Ex}
One example of multiplicative $(1,1)$-tensors on a Lie groupoid $\mathcal{G}$ is $\overrightarrow{n}-\overleftarrow{n}$ for any $n\in \Gamma(T^*M\otimes A)$. And $\overrightarrow{n}$ and $\overleftarrow{n}$ are both affine $(1,1)$-tensors on $\mathcal{G}$.

\end{Ex}

\begin{Ex}\label{Ex:group}
An affine $(1,1)$-tensor on a Lie group $G$ is a multiplicative $(1,1)$-tensor on $G$, which is a $G$-equivariant linear map from $\mathfrak{g}:=\mathrm{Lie}(G)$ to $\mathfrak{g}$. Namely,
\[T^{1,1}_{\mathrm{aff}}(G)=\{N\in \End(\mathfrak{g})|N(\Ad_g u)=\Ad_gN(u), g\in G,u\in \mathfrak{g}\}.\]

The product  of two affine $(1,1)$-tensors in the strict monoidal category structure is the composition of linear maps and the groupoid multiplication is trivial, i.e. $N* N=N$, meaning that an affine $(1,1)$-tensor can only multiply itself, which results in itself.
\end{Ex}

\begin{Ex}\label{Ex:pair}
For the pair groupoid $\mathcal{G}=M\times M\rightrightarrows M$, a multiplicative $(1,1)$-tensor on $\mathcal{G}$ is always of the form $\overrightarrow{N}- \overleftarrow{N}$ for a bundle map $N:TM\to TM$, where by definition,
\[(\overrightarrow{N}-\overleftarrow{N})(X,Y)=\overrightarrow{N(X)}-\overleftarrow{N(Y)}=(N(X),0)+(0,N(Y))=(N(X),N(Y)), \]
for all $(X,Y)\in T_x M\times T_y M$.
In fact, for any $u\in \mathfrak{X}(M)$, we get $\overrightarrow{u}(x,y)=\frac{d}{dt}|_{t=0} (\phi_t^u(x),x)(x,y)=(u,0)$, where $\phi_t^u(x)$ is a flow of $u$ such that $\phi_0^u(x)=x$. And $\overleftarrow{u}(x,y)=-\frac{d}{dt}|_{t=0}(x,y)(y,\phi_t^u(y))=-(0,u)$, where the minus sigh comes from the convention that $\overleftarrow{u}(\cdot)=-L_{(\cdot)} \mathrm{inv}(u)$.

By the relation between affine and multiplicative  $(1,1)$-tensors, an affine $(1,1)$-tensor is of the form $\overrightarrow{N}+\overleftarrow{N'}$ for two bundle maps $N,N'\in \End(TM)$. For simplicity, we write an affine $(1,1)$-tensor  as $(N,N')$. The product in the strict monoidal category structure  is the composition of two $(1,1)$-tensors:
\[(N_1,N_2)\circ (N_3,N_4)=(N_1\circ N_3,-N_2\circ N_4),\qquad \forall N_i\in \End(TM),\]
which follows from
\[(\overrightarrow{N_1}+\overleftarrow{N_2})\circ  (\overrightarrow{N_3}+\overleftarrow{N_4})(X,Y)=(\overrightarrow{N_1}+\overleftarrow{N_2})(N_3(X),-N_4(Y))=(N_1\circ N_3(X),N_2\circ N_4(Y)).\]
For the groupoid multiplication, two affine $(1,1)$-tensors $(N_1,N_2)$ and $(N_3,N_4)$ are multiplicable if and only if $N_2=-N_3$ and the multiplication is
\[(N_1,N_2)*(N_3,N_4)=\overrightarrow{N_1}+\overleftarrow{N_2}+\overleftarrow{N_3}+\overleftarrow{N_4}=\overrightarrow{N_1}+\overleftarrow{N_4}=(N_1,N_4).\]
\end{Ex}
The next example we are interested in is the direct sum of the pair groupoid and a Lie group: $M\times M\times G\rightrightarrows M$. The following proposition tells us that  this case is just the direct sum of Example \ref{Ex:group} and \ref{Ex:pair}. So the strict monoidal category structure for this case is also clear.

\begin{Pro}
An affine $(1,1)$-tensor $N$ on $M\times M\times G\rightrightarrows M$ is of the form
\[N(X,Y,g,u)=(N_1(X),N_2(Y),g,L(u)),\qquad \forall N_1,N_2\in \End(TM), L\in  \End(\mathfrak{g})^G,\]
for $X\in T_x M, Y\in T_y M, g\in G$ and $u\in \mathfrak{g}$. It is multiplicative if and only if $N_1=N_2$.
\end{Pro}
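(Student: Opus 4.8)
The plan is to exploit the product structure $\mathcal{G}=(M\times M)\times G$ and reduce, via Proposition \ref{affine-mult}, to a classification of multiplicative $(1,1)$-tensors followed by the right-translation correction. The Lie algebroid of $\mathcal{G}$ is $A=TM\oplus\mathfrak{g}$ (with $\mathfrak{g}$ the trivial bundle) and anchor the projection onto $TM$, while the tangent groupoid splits as $T\mathcal{G}=(TM\times TM)\times TG$, the product of the pair groupoid of $TM$ with the tangent group $TG$. First I would record the groupoid structure of $T\mathcal{G}\rightrightarrows TM$ in these coordinates: a vector over $(x,y,g)$ is a triple $(X,Y,\xi)\in T_xM\oplus T_yM\oplus T_gG$, two such are composable exactly when their $T_yM$-entries agree, and the product multiplies the pair-groupoid entries and the $TG$-entries separately, the latter corresponding to $u+\Ad_g v$ once $T_gG\cong\mathfrak{g}$ is right-trivialized.

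By Proposition \ref{affine-mult} it suffices to treat $N_r=N-\overrightarrow{n}$, which is multiplicative, and then add back $\overrightarrow{n}$ for $n=\mathrm{pr}_{T^*M\otimes A}N|_M$. A multiplicative $(1,1)$-tensor is a Lie groupoid morphism $T\mathcal{G}\to T\mathcal{G}$; as used in Lemma \ref{Hor1} it sends right- and left-invariant vector fields to right- and left-invariant ones, hence is completely determined by a single bundle endomorphism $n_A$ of $A=TM\oplus\mathfrak{g}$. Writing $n_A$ in $2\times 2$ block form, the heart of the argument is to impose the morphism identity $N_r(Z\cdot W)=N_r(Z)\cdot N_r(W)$ on composable pairs with generic $G$-component. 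Comparing the $\mathfrak{g}$-parts of the two sides in the right trivialization, the potential cross terms appear multiplied by $\Ad_g-\Ad_{g^{-1}}$ and by $\Ad_{gh^{-1}}-\Ad_{(gh)^{-1}}$; letting $g,h$ vary then forces the off-diagonal blocks ($TM\to\mathfrak{g}$ and $\mathfrak{g}\to TM$) of $n_A$ to vanish, the $TM\to TM$ block to be a single endomorphism appearing on both pair factors, and the $\mathfrak{g}\to\mathfrak{g}$ block to be base-point independent and to commute with every $\Ad_g$, i.e.\ a $G$-equivariant $L\in\End(\mathfrak{g})^G$. This is exactly Example \ref{Ex:pair} on the pair factor together with Example \ref{Ex:group} on the group factor, so $N_r(X,Y,g,u)=(N_0X,N_0Y,g,Lu)$ for some $N_0\in\End(TM)$.

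Finally I would compute the correction $\overrightarrow{n}$ from \eqref{fleft}. Since $\overrightarrow{n}(Z)=\overrightarrow{n(tZ)}$ and the target of $(X,Y,\xi)$ is its $T_xM$-entry $X$, right translation feeds only $X$ into $n$ and places the result back over $(x,y,g)$. Adding this to $N_r$ leaves the second pair block untouched and shifts the first, so the two pair blocks become independent endomorphisms $N_1,N_2\in\End(TM)$ while the $G$-block stays the equivariant $L$. This yields the stated form $N(X,Y,g,u)=(N_1X,N_2Y,g,Lu)$, and multiplicativity amounts to $n=0$, that is, the two pair blocks coincide, which is precisely $N_1=N_2$.

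The main obstacle is the morphism-condition computation of the second paragraph: it is exactly the non-commutativity of the adjoint action ($\Ad_g\neq\Ad_{g^{-1}}$ and $\Ad_{gh^{-1}}\neq\Ad_{(gh)^{-1}}$ for generic $g,h$) that kills the cross terms between the $M\times M$ directions and the $G$ direction and upgrades the $\mathfrak{g}$-block from an arbitrary base-dependent endomorphism to a constant $G$-equivariant one. The pair-groupoid rigidity, namely the unique splitting of each tangent vector into a right- and a left-invariant part, does the analogous work on the $M\times M$ factor; verifying this decoupling carefully, so that the two examples are genuinely assembled with no interaction, is where the real work lies.
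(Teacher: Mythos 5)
Your overall strategy is the one the paper itself follows: split off $N_r=N-\overrightarrow{n}$ via Proposition \ref{affine-mult}, classify the multiplicative $(1,1)$-tensors as Lie groupoid morphisms of $T\mathcal{G}=(TM\times TM)\times TG$, and add the right-translation correction back at the end. Your first and third paragraphs are sound: source/target compatibility forces the two pair-groupoid blocks of a multiplicative tensor to be one and the same $N_0\in\End(TM)$ and kills the $\mathfrak g\to TM$ block (the $TM$-components of $N(X,Y,\xi)$ are $n_{TM}(tX)$ and $n_{TM}(sY)$, which cannot see $\xi$), and adding $\overrightarrow{n}$ does decouple the two pair blocks into independent $N_1,N_2$.

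The gap is in your second paragraph, at the step where you kill the $TM\to\mathfrak g$ cross term. In the right trivialization the morphism identity on the $\mathfrak g$-part reads
\[
N_3(X,Y,g,u)+\Ad_g N_3(Y,Z,h,v)=N_3(X,Z,gh,u+\Ad_g v),
\]
and no factors $\Ad_g-\Ad_{g^{-1}}$ or $\Ad_{gh^{-1}}-\Ad_{(gh)^{-1}}$ arise anywhere; the inverse of $g$ never enters the composition law, so the cancellation mechanism you invoke does not exist. Solving this equation for a bundle map $N_3$ actually yields the general solution $N_3(X,Y,g,u)=c(X)-\Ad_g c(Y)+L(u)$ with $c\in\Gamma(T^*M\otimes\mathfrak g)$ \emph{arbitrary} and $L\in\End(\mathfrak g)^G$. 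The surviving cross term is exactly $\overrightarrow{c}-\overleftarrow{c}$ for the $T^*M\otimes\mathfrak g$-component of a section of $T^*M\otimes A$, which the paper's own example of multiplicative $(1,1)$-tensors exhibits as multiplicative; concretely, on $\mathbbm{R}\times\mathbbm{R}\times\mathbbm{R}$ the tensor $\partial_g\otimes(dx-dy)$ is multiplicative and is not of the stated form. So the decoupling you assert is not merely unproved by your argument --- it fails, and the classification acquires extra parameters $c_1,c_2\in\Gamma(T^*M\otimes\mathfrak g)$ in the affine case (with multiplicativity then requiring $c_1=c_2$ as well as $N_1=N_2$). In fairness, the paper's proof has the same weak point: it asserts without computation that ``$N_3(X,Y,g,u)$ is independent of $X$ and $Y$,'' which contradicts its own earlier example. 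You have reproduced the paper's argument faithfully, but the crucial step, as you justify it and as the paper states it, does not go through.
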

\begin{proof}
 A multiplicable $(1,1)$-tensor field on $\mathcal{G}$ is a bundle map \[N=(N_1,N_2,N_3):TM\times TM\times TG\to TM\times TM\times TG\]
over the base manifold $M\times M\times G$ and also a groupoid morphism over $TM$ to itself. We claim that a multiplicative $(1,1)$-tensor field can only be of the form
\[N(X,Y,g,u)=(N(X),N(Y),g,L(u)),\qquad \forall X\in T_x M,Y\in T_y M,g\in G,u\in \mathfrak{g},\]
for some $N\in \End(TM)$ and a $G$-equivariant linear map $L\in \End(\mathfrak{g})^G$.
In fact, since $N$ preserves the $s$ and $t$-fibers, the first and second components $N_1, N_2$ have to be the same.
 Then  $N$ being a groupoid morphism requires that
\[N_3(X,Y,g,u)+\Ad_g N_3(Y,Z,h,v)=N_3(X,Z,gh,u+\Ad_g v).\]
Since $N_3$ is a bundle map, it follows that $N_3(X,Y,g,u)$ is independent of $X$ and $Y$ and it is determined by a $G$-equivariant linear map $L\in \End(\mathfrak{g})$.  We thus easily get that an affine $(1,1)$-tensor field $N$ on $M\times M\times G\rightrightarrows M$ is of the form
\[N(X,Y,g,u)=(N_1(X),N_2(Y),g,L(u)),\qquad \forall N_1,N_2\in \End(TM),L\in  \End(\mathfrak{g}).\]
Hence the strict monoidal structure for this case is clear.
\end{proof}

At the end of this section, we provide a class of affine $(1,1)$-tensors coming from the composition of affine $2$-vector fields and affine $2$-forms.

\begin{Pro}
The composition $\Pi\circ \Theta:T\mathcal{G}\to T\mathcal{G}$ of an affine $2$-vector field $\Pi\in \mathfrak{X}^2(\mathcal{G})$ and an affine $2$-form $\Theta\in \Omega^2(\mathcal{G})$  is an affine  $(1,1)$-tensor with
\begin{eqnarray}\label{eqe}
\mathrm{pr}_{T^*M\otimes A}(\Pi\circ \Theta)|_M=\pi_{A^*} \circ \theta+\pi\circ \theta_{TM}+\pi\circ \rho^*\circ \theta,
\end{eqnarray}
where $\pi\in \Gamma(\wedge^2 A), \pi_{A^*}\in \Gamma(A\otimes T^*M), \theta_{TM}\in \Gamma(T^*M\otimes A^*)$ and $\theta\in \Omega^2(M)$ are the corresponding components of $\Pi$ and $\Theta$ restricting on $M$ respectively.

Moreover, the associated two multiplicative $(1,1)$-tensors are
\[(\Pi\circ \Theta)_l=\Pi_l\circ \Theta_l,\qquad (\Pi\circ \Theta)_r=\Pi_r\circ \Theta_r.\]
\end{Pro}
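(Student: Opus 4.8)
The plan is to imitate the proof of Lemma~\ref{Hor1}, viewing $\Pi\colon T^*\mathcal{G}\to T\mathcal{G}$ and $\Theta\colon T\mathcal{G}\to T^*\mathcal{G}$ as the bundle maps given by contraction, so that $\Pi\circ\Theta\colon T\mathcal{G}\to T\mathcal{G}$ is literally a $(1,1)$-tensor. First I would use Proposition~\ref{leftright} and Proposition~\ref{aff mul form} to write $\Pi=\Pi_r+\overrightarrow{\pi}$ and $\Theta=\Theta_r+t^*\theta$, where $\Pi_r,\Theta_r$ are the associated multiplicative structures and $\overrightarrow{\pi}\colon T^*\mathcal{G}\to T\mathcal{G}$, $t^*\theta\colon T\mathcal{G}\to T^*\mathcal{G}$ are the corresponding bundle maps. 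Since composition of bundle maps is bilinear, this gives
\[
\Pi\circ\Theta=\Pi_r\circ\Theta_r+\Pi_r\circ t^*\theta+\overrightarrow{\pi}\circ\Theta_r+\overrightarrow{\pi}\circ t^*\theta,
\]
and the proof reduces to analysing these four pieces.

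For the leading piece I would argue that $\Pi_r\circ\Theta_r$ is multiplicative. A multiplicative bivector field is the same as a Lie groupoid morphism $T^*\mathcal{G}\to T\mathcal{G}$ and a multiplicative $2$-form the same as a Lie groupoid morphism $T\mathcal{G}\to T^*\mathcal{G}$ (this is the morphism picture of multiplicative structures recalled in the remarks of this subsection); hence their composite is again a Lie groupoid morphism $T\mathcal{G}\to T\mathcal{G}$, i.e. a multiplicative $(1,1)$-tensor. This isolates the multiplicative part, after which everything left should be a right translation.

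The heart of the argument is the pointwise computation of the three cross terms on a vector $X\in T_g\mathcal{G}$, exactly as in Lemma~\ref{Hor1} but now feeding the cotangent groupoid relations \eqref{s cot} into the bookkeeping. Using the identity $t\circ R_g=t$ together with the fact that $\Pi_r$ and $\Theta_r$ intertwine the source and target maps, I expect to obtain $\Pi_r\circ t^*\theta=\overrightarrow{\pi_{A^*}\circ\theta}$ (its right invariance being guaranteed by Lemma~\ref{affine vf}(2), since $\iota_{t^*\xi}\Pi_r$ is right invariant) and $\overrightarrow{\pi}\circ\Theta_r=\overrightarrow{\pi\circ\theta_{TM}}$. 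The delicate term is $\overrightarrow{\pi}\circ t^*\theta$: because the target differential restricts on $A$ to the anchor, $t_*|_A=\rho$, pulling a base covector back by $t^*$ feeds its $A^*$-component through $\rho^*$, and contracting with $\pi$ produces exactly the extra factor, giving $\overrightarrow{\pi}\circ t^*\theta=\overrightarrow{\pi\circ\rho^*\circ\theta}$. This is the analogue of the $n\circ\rho\circ n'$ term in Lemma~\ref{Hor1}, and it is the step I expect to be the main obstacle, since it requires tracking how right translation interacts with the pullback $t^*$ and the dual anchor $\rho^*$.

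Summing the three cross terms shows that $\Pi\circ\Theta-\Pi_r\circ\Theta_r$ is the right translation of the single section $\pi_{A^*}\circ\theta+\pi\circ\theta_{TM}+\pi\circ\rho^*\circ\theta\in\Gamma(T^*M\otimes A)$. Since $\Pi_r\circ\Theta_r$ is multiplicative, Proposition~\ref{affine-mult} then yields at once that $\Pi\circ\Theta$ is affine, that $(\Pi\circ\Theta)_r=\Pi_r\circ\Theta_r$, and that its $T^*M\otimes A$-component is the expression in \eqref{eqe}. Finally I would obtain $(\Pi\circ\Theta)_l=\Pi_l\circ\Theta_l$ by running the identical computation with the decompositions $\Pi=\Pi_l+\overleftarrow{\pi}$ and $\Theta=\Theta_l+s^*\theta$, replacing right translation and the target by left translation and the source throughout.
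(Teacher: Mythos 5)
Your proposal follows essentially the same route as the paper's proof: the decomposition $\Pi=\Pi_r+\overrightarrow{\pi}$, $\Theta=\Theta_r+t^*\theta$, the identification of $\Pi_r\circ\Theta_r$ as a composite of Lie groupoid morphisms, and the three cross-term identities $\overrightarrow{\pi}\circ\Theta_r=\overrightarrow{\pi\circ\theta_{TM}}$, $\Pi_r\circ t^*\theta=\overrightarrow{\pi_{A^*}\circ\theta}$, $\overrightarrow{\pi}\circ t^*\theta=\overrightarrow{\pi\circ\rho^*\circ\theta}$ (the last via $t\overrightarrow{\pi}(\xi)=\rho\pi(t\xi)$, i.e. the anchor appearing through the target map) are exactly the steps the paper carries out using \eqref{s cot}. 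The argument is correct as outlined.
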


\begin{proof}
Denote by $\Pi=\Pi_r+\overrightarrow{\pi}$ and $\Theta=\Theta_r+t^*\theta$, where $\Pi_r$ and $\Theta_r$ are the associated multiplicative $2$-vector field and $2$-form. Then
\begin{eqnarray}\label{eqd}
\Pi\circ \Theta=\Pi_r\circ \Theta_r+\overrightarrow{\pi}\circ \Theta_r+\Pi_r\circ t^*\theta+\overrightarrow{\pi}\circ t^*\theta.
\end{eqnarray}
Acting on $X\in T_g \mathcal{G}$ and pairing with $\xi\in T_g^*\mathcal{G}$, we find
\[\langle \overrightarrow{\pi}\circ \Theta_r(X),\xi\rangle=\langle\pi, R_g^* \Theta_r(X)\wedge R_g^* \xi\rangle=\langle \pi, t\Theta_r(X)\wedge t\xi\rangle=\langle\pi, \theta_{TM}(tX)\wedge t\xi\rangle=\langle \overrightarrow{\pi\circ \theta_{TM}(tX)},\xi\rangle,\]
where we have used (\ref{s cot}), i.e. $t_{T^*\mathcal{G}}=R_g^*: T_g^*\mathcal{G}\to A_{t(g)}^*$ and the fact that $(\Theta_r, \theta_{TM})$ is a Lie groupoid morphism from $T\mathcal{G}$ to $T^*\mathcal{G}$.
This implies that \[\overrightarrow{\pi}\circ \Theta_r=\overrightarrow{\pi\circ \theta_{TM}}.\] On the other hand,
using $t_{T^*\mathcal{G}}=R_g^*$ again and the fact that $(\Pi_r,\pi_{A^*}):T^*\mathcal{G}\to T\mathcal{G}$ is a Lie groupoid morphism,
we have \[\langle\Pi_r\circ t^*\theta(X),\xi\rangle=-\langle \theta, tX\wedge t\Pi_r(\xi)\rangle=-\langle \theta(tX), \pi_{A^*}t(\xi)\rangle=\langle \overrightarrow{\pi_{A^*}\circ \theta(tX)}, \xi\rangle.\]
Thus \[\Pi_r\circ t^*\theta=\overrightarrow{\pi_{A^*}\circ \theta}.\] At the end, observe that $t\overrightarrow{\pi}(\xi)=\rho\pi(t\xi)$ since
\[\langle t\overrightarrow{\pi}(\xi),\alpha\rangle=\langle \pi,R_g^*\xi\wedge R_g^*t^*\alpha\rangle=\langle \pi(t\xi),t^*\alpha\rangle=\langle \pi(t\xi),\rho^*\alpha\rangle, \qquad \forall \alpha\in \Omega^1(M).\]
We obtain
\[\langle \overrightarrow{\pi}\circ t^*\theta(X),\xi\rangle=-\langle \theta(tX),t\overrightarrow{\pi}(\xi)\rangle=-\langle \theta(tX),\rho\pi(t\xi)\rangle=\langle \overrightarrow{\pi\circ \rho^*\circ \theta(tX)},\xi\rangle.\]
Hence
\[\overrightarrow{\pi}\circ t^*\theta=\overrightarrow{\pi\circ \rho^*\circ \theta}.\]
Coupled with (\ref{eqd}), we get
\[\Pi\circ \Theta=\Pi_r\circ \Theta_r+\overrightarrow{(\pi\circ \theta_{TM}+\pi_{A^*}\circ \theta+\pi\circ \rho^*\circ \theta)}.\]
Since $\Pi_r\circ \Theta_r: T\mathcal{G}\to T\mathcal{G}$ is a Lie groupoid morphism and thus gives a multiplicative $(1,1)$-tensor, we proved that $\Pi\circ \Theta$ is an affine $(1,1)$-tensor. The other assertions are also clear.
\end{proof}

\end{document}